\newcommand{\rto}{\blacktriangleright}
\newcommand{\lto}{\blacktriangleleft}
\newcommand{\llnot}{\lnot \lnot}
\newcommand{\C}{\mathrm C}
\newcommand{\mc}[1]{\mathcal{ #1}}
\newcommand{\chc}{\mathtt{CHC}}
\newcommand{\vchc}{\vdash_{\mathtt{CHC}}}
\newcommand{\id}{\RightLabel{(id)}}
\newcommand{\0}{\RightLabel{(0)}}
\newcommand{\one}{\RightLabel{(1)}}
\newcommand{\wel}{\RightLabel{(w-l)}}
\newcommand{\wer}{\RightLabel{(w-r)}}
\newcommand{\cut}{\RightLabel{(cut)}}
\newcommand{\landl}{\RightLabel{($\land$-l)}}
\newcommand{\landr}{\RightLabel{($\land$-r)}}
\newcommand{\lorl}{\RightLabel{($\lor$-l)}}
\newcommand{\lorr}{\RightLabel{($\lor$-r)}}
\newcommand{\implr}{\RightLabel{($\rightarrow$-r)}}
\newcommand{\implla}{\RightLabel{($\rightarrow$-l(a))}}
\newcommand{\impllb}{\RightLabel{($\rightarrow$-l(b))}}
\newcommand{\impllc}{\RightLabel{($\rightarrow$-l(c))}}
\newcommand{\implld}{\RightLabel{($\rightarrow$-l(d))}}
\newcommand{\negl}{\RightLabel{($\lnot$-l)}}
\newcommand{\negr}{\RightLabel{($\lnot$-r)}}
\newcommand{\eqr}{\RightLabel{($=$)}}
\newcommand{\vrp}{\varphi}
\newcommand{\alga}{\mathbf A}
\newcommand{\algb}{\mathbf B}
\newcommand{\lincong}{\theta_{T}}
\providecommand{\U}[1]{\protect\rule{.1in}{.1in}}
\newtheorem{theorem}{Theorem}
\theoremstyle{plain}
\newtheorem{acknowledgement}{Acknowledgement}
\newtheorem{corollary}{Corollary}
\newtheorem{definition}{Definition}
\newtheorem{example}{Example}
\newtheorem{lemma}{Lemma}
\numberwithin{equation}{section}
\title{Intuitionistic logic is a connexive logic}
\author{Davide Fazio}
\address{D. Fazio, Dipartimento di Pedagogia, Psicologia, Filosofia, Universit\`a di Cagliari}
\email{dav.faz@hotmail.it}
\author{Antonio Ledda}
\address{A. Ledda, Dipartimento di Pedagogia, Psicologia, Filosofia, Universit\`a di Cagliari}
\email{antonio.ledda@unica.it}
\author{Francesco Paoli}
\address{F. Paoli, Dipartimento di Pedagogia, Psicologia, Filosofia, Universit\`a di Cagliari}
\email{paoli@unica.it}
\date{\today}                                           
\begin{document}
\maketitle
\begin{abstract}
We show that intuitionistic logic is deductively equivalent to Connexive Heyting Logic ($\mathrm{CHL}$), hereby introduced as an example of a strongly connexive logic with an intuitive semantics. We use the \emph{reverse algebraisation} paradigm: $\mathrm{CHL}$ is presented as the assertional logic of a point regular variety (whose structure theory is examined in detail) that turns out to be term equivalent to the variety of Heyting algebras. We provide Hilbert-style and Gentzen-style proof systems for $\mathrm{CHL}$; moreover, we suggest a possible computational interpretation of its connexive conditional, and we revisit Kapsner's idea of superconnexivity.  
\end{abstract}

\section{Introduction}

Despite being one of the earliest traditions to appear in the development of contemporary nonclassical logics (see e.g. \cite{MP}), \emph{connexive logic} has gone under the radar for quite a while, overshadowed by modal and relevance logics in the debate over entailment and other philosophically driven applications of logic. However, the last two decades have witnessed a spectacular resurgence of interest for this approach \cite{Wanstanford, OW, McCall}. Connexive logics embrace some theses about implication and negation that fail in classical logic, yet are intuitively appealing to many:
\begin{itemize}
\item $\lnot (\varphi \rightarrow \lnot \varphi)  $
(Aristotle 1)

\item $\lnot (\lnot \varphi \rightarrow \varphi) $
(Aristotle 2)

\item $\left( \varphi \rightarrow \psi \right) \rightarrow \lnot \left( 
\varphi \rightarrow \lnot \psi \right)  $
(Boethius 1)

\item $\left( \varphi \rightarrow \lnot \psi \right) \rightarrow \lnot \left( 
\varphi \rightarrow \psi \right) $
(Boethius 2)

\end{itemize}

It is precisely the presence of the above theorems that qualifies a logic as connexive, together with the fact that implications should not always be convertible. The latter \emph{desideratum} is crucial, because connexive logicians are adamant that they are after some notion of implication, not some notion of logical equivalence. These requirements are aptly summarised by Wansing \cite{Wanstanford}, whose definition of a connexive logic we take verbatim (although with some notational changes):

\begin{quotation}
Let $\mathcal{L}$ be a language containing a unary connective $\lnot $
(negation) and a binary connective $\rightarrow $ (implication). A logical
system in a language extending $\mathcal{L}$ is called a connexive logic if
[Aristotle 1, Aristotle 2, Boethius 1 and Boethius 2] are theorems and,
moreover, implication is non-symmetric, i.e., $(\varphi \rightarrow \psi
)\rightarrow (\psi \rightarrow \varphi )$ fails to be a theorem (so that $%
\rightarrow $ can hardly be understood as a bi-conditional). This is the now
standard notion of connexive logic.
\end{quotation}

Some authors like Kapsner \cite{Kapstrong}, have contended that these minimal features are insufficient to meet the intuitive demand of connexivity. Something more is required, namely, that formulas of the form $\varphi \rightarrow \lnot \varphi $ behave like the connexive analogue of contradictions, while formulas of the form $\varphi \rightarrow \psi $ and $\varphi \rightarrow \lnot \psi $ should play the role of connexive contraries. Kapsner says that a connexive logic is\emph{\ strongly connexive} if it satisfies the two additional conditions:

\begin{itemize}
\item In no model, $\varphi \rightarrow \lnot \varphi $ is satisfiable (for
any $\varphi $);

\item In no model, $\varphi \rightarrow \psi $ and $\varphi \rightarrow
\lnot \psi $ are simultaneously satisfiable (for any $\varphi ,\psi $).
\end{itemize}

Strong connexivity, however, is not so easy to implement in practice. Omori and Wansing \cite[p. 382]{OW} observe:

\begin{quotation}
The only strongly connexive logic, to the best of our knowledge, is the
heavily criticized system of Angell-McCall, and it remains to be seen if
there are strongly connexive systems with more intuitive semantics.
\end{quotation}

In this paper, we introduce and investigate a strongly connexive logic -- \emph{Connexive Heyting Logic} -- that is \emph{algebraisable} in the sense of Blok and Pigozzi, hence it occupies the highest rank in the so-called Leibniz hierarchy in abstract algebraic logic \cite{Font}. More to the point, we follow the \emph{reverse algebraisation} approach, as advocated e.g. in \cite{Czelakowski, BR+}: We introduce a suitable class of algebras -- indeed, a subvariety of Sankappanavar's \emph{semi-Heyting algebras} \cite{Sanka} -- whose properties have a pronounced connexive flavour, encoding into its axioms enough deductive power to force the algebraisability of its assertional logic. It is precisely by studying the algebraic properties of such \emph{connexive Heyting algebras}, and in particular the presence of a quaternary deductive term (see below), that we conjectured that they might have been term equivalent, as a variety, to Heyting algebras, and then that Connexive Heyting Logic might have turned out to be deductively equivalent to intuitionistic logic. Both conjectures were indeed correct, as shown below.

The strategy we followed in establishing these results is evidently indebted to a fundamental paper by Spinks and Veroff \cite{SV1, SV2}, who prove that Nelson's constructive logic with strong negation is deductively equivalent to a certain substructural logic. These similarities are even alluded to in the title of the present work.

Let us now summarise the discourse of the paper. In Section \ref{prelim} we rehearse a few preliminary notions of abstract algebraic logic and universal algebra needed in the sequel. In Section \ref{Brouwethius}, which is the core of this work, we introduce the variety of connexive Heyting algebras, study its properties, and show that they are term equivalent to Heyting algebras. In Section \ref{ciaccaelle} we take advantage of this result to establish a deductive equivalence between Connexive Heyting Logic (the assertional logic of connexive Heyting algebras) and intuitionistic logic. Putting to good use such an equivalence, we provide Hilbert-style and Gentzen-style calculi for Connexive Heyting Logic. Some philosophical considerations on the computational meaning of connexive implication and on Kapsner's notion of superconnexivity are reserved for Section \ref{philup}. We conclude in Section \ref{openprob}.

\section{Preliminaries}\label{prelim}

We assume a basic knowledge of universal algebra and abstract algebraic logic on the part of the reader, who is referred to \cite{Burris} and to \cite{Font}, respectively, for any unexplained concept or symbol. None the less, a few notions and results that are important for what follows and may be relatively unfamiliar to the intended readership of this paper are recapitulated in this section. We also assume some familiarity with the fundamental notions of connexive logic, for which the reader can consult \cite{Wanstanford}.

\subsection{Equivalence of logics}

It is nowadays customary to view (propositional) logics as ordered pairs of the form $\mathrm{L} = \langle \mathbf{Fm}_{\mathcal{L}}, \vdash_{\mathrm{L}} \rangle$, where $\mathbf{Fm}_{\mathcal{L}}$ is the formula algebra of some propositional language $\mathcal{L}$ and $\vdash_{\mathrm{L}} \subseteq\wp(Fm_{\mathcal{L}})\times Fm_{\mathcal{L}%
}$ is a binary relation obeying the following conditions for all $\Gamma,\Delta \subseteq Fm_{\mathcal{L}}$, $\varphi, \psi \in Fm_{\mathcal{L}}$, and $\sigma$ an $\mathcal{L}$-substitution (an endomorphism of $\mathbf{Fm}_{\mathcal{L}}$):

\begin{itemize}
\item $\Gamma \vdash_{\mathrm{L}} \varphi$ whenever $\varphi\in \Gamma$. \hfill(\emph{Reflexivity})

\item If\/ $\Gamma \vdash_{\mathrm{L}} \varphi$ and $\Gamma\subseteq \Delta$, then $\Delta \vdash_{\mathrm{L}} \varphi$.
\hfill(\emph{Monotonicity})

\item If $\Delta \vdash_{\mathrm{L}} \varphi$ and $\Gamma \vdash_{\mathrm{L}} \psi$ for every $\psi\in \Delta$, then
$\Gamma \vdash_{\mathrm{L}} \varphi$. \hfill(\emph{Cut})

\item If $\Gamma \vdash_{\mathrm{L}}
\varphi$, then $\sigma(\Gamma) \vdash_{\mathrm{L}} \sigma(\varphi)$. \hfill(\emph{Substitution-invariance})

\end{itemize}

The first three demands in the previous list define the general concept of a \emph{consequence relation}; according to the fourth condition, whether a sentence \emph{logically}
follows from a set of sentences should not depend on the subject matter of the
sentences under consideration, but merely on their logical form. However, one might further contend that the definition of logical consequence should not be tied to any pre-determined type of syntactic unit. In other words, we should make room for consequence relations among sequents or
equations, alongside the traditional ones among formulas, and devise at the same time a
notion of \emph{equivalence} according to which relations on different
syntactic units can be taken to represent the same logic. 

In a 2006 paper \cite{BJ}, Wim Blok and Bjarni J\'{o}nsson take a decisive
step. They suggest to replace the formula algebra in the traditional definition of a consequence relation by an
arbitrary set:

\begin{definition}\label{cioccoblocco}
An \emph{abstract consequence
relation} on a set $A$ is a relation ${\vdash}\subseteq
{\wp(A)\times A}$ obeying the following conditions for all $\Gamma,\Delta \subseteq A$
and for all $a\in A$:

\begin{itemize}
\item $\Gamma \vdash a$ whenever $a\in \Gamma$. \hfill(\emph{Reflexivity})

\item If\/ $\Gamma \vdash a$ and $\Gamma \subseteq \Delta$, then $\Delta \vdash a$. \hfill
(\emph{Monotonicity})

\item If\/ $\Delta \vdash a$ and $\Gamma \vdash b$ for every $b\in \Delta$, then $\Gamma \vdash a$.
\hfill(\emph{Cut})
\end{itemize}

\end{definition}

The absence of an analogue of substitution-invariance in this definition is not that surprising. After all, in a set there is no structure to be preserved, and
consequently no applicable concept of an endomorphism. Blok and J\'{o}nsson's
valuable insight is the observation that the application of substitutions to
propositional formulas (or, for that matter, equations or sequents) behaves
like a multiplication by a scalar. In fact, if $\mathcal{L}$ is a language,
$\varphi$ is an $\mathcal{L}$-formula and $\sigma_{1},\sigma_{2}$ are
$\mathcal{L}$-substitutions, then $\left(  \sigma_{1}\circ\sigma_{2}\right)
\left(  \varphi\right)  =\sigma_{1}\left(  \sigma_{2}\left(  \varphi\right)
\right)  $, and if $\iota$ is the identity $\mathcal{L}$-substitution,
$\iota\left(  \varphi\right)  =\varphi$. Generalising this example, we are led
to the following abstract counterpart of a substitution-invariant consequence relation on formulas. 
\begin{definition}
\begin{itemize}
    \item Let $A$ be a set and $\mathbf{M}=\left\langle M,\cdot,1\right\rangle $ a monoid. $A$ is a \emph{left }$\mathbf{M}$\emph{-act} if there is a
map $\star:M\times A\rightarrow A$ s.t.\ for all $m_{1},m_{2}\in M$ and all
$a\in A$, $\left(  m_{1}\cdot m_{2}\right)  \star a=m_{1}\star\left(
m_{2}\star a\right)  $ and $1\star a=a$.
    \item An abstract consequence relation $\vdash$ on a left $\mathbf{M}$-act $A$
is \emph{action-invariant} if for all $X\cup\left\{  a\right\}
\subseteq A$ and $m\in M$, whenever $X\vdash a$ we also have that $\left\{
m\star x:x\in X\right\}  \vdash m\star a$.
\end{itemize}

\end{definition}

Next, Blok and J\'{o}nsson define a notion of equivalence between
action-invariant abstract consequence relations:

\begin{definition}
Let $\mathbf{M}$ be a monoid, and let $\vdash_{1}$ and $\vdash_{2}$ be two action-invariant abstract consequence relations on the left $\mathbf{M}$-acts $A_{1}$ and $A_{2}$, respectively. $\vdash_{1}$ and $\vdash_{2}$ are \emph{equivalent} if there are mappings $\tau\colon A_{1}\rightarrow\wp\left(  A_{2}\right), \rho\colon
A_{2}\rightarrow\wp\left(  A_{1}\right)$
such that for every $\Gamma\cup\left\{  a\right\}  \subseteq A_{1}$, every
$b \in A_{2}$ and every $m \in M$:
\begin{itemize}
    \item $\Gamma\vdash_{1}a$ iff $\tau\left(  \Gamma\right)  \vdash_{2}
\tau\left(  a\right)  $;
    \item $b\dashv\vdash_{2}\tau\left(  \rho\left(
b\right)  \right)  $;
    \item $\tau (m \star a) = m \star \tau (a) $ and $\rho (m \star b) = m \star \rho (b) $.
\end{itemize}

\end{definition}

Two special cases of this definition are worth flagging. The former is the celebrated notion of \emph{algebraisability} of a logic, due to Blok and Pigozzi \cite{BlokP}. The latter is the notion of \emph{Gentzen algebraisability} of a sequent calculus, which presents different incarnations in the algebraic logic literature; the one we use in this paper is, essentially, to be found in the work of James Raftery \cite{Raftery}.

\begin{definition}\label{algebru}
\begin{itemize}
    \item Let $\mathrm{L} = \langle \mathbf{Fm}_{\mathcal{L}}, \vdash_{\mathrm{L}} \rangle$ be a logic of language $\mathcal{L}$, and $\mathcal{K}$ be a class of similar algebras of the same language. $\mathrm{L}$ is \emph{algebraisable} with equivalent algebraic semantics $\mathcal{K}$ if $\vdash_{\mathrm{L}}$ and the equational consequence relation $\vdash_{\mathcal{K}}$ are equivalent as abstract consequence relations. The sets of all equations $\tau(\varphi)$ and of all formulas $\rho(\varphi,\psi)$ are called a \emph{system of defining equations} and a \emph{system of equivalence formulas}, respectively, for $\mathrm{L}$ and $\mathcal{K}$.
    \item Let $\mathtt{C}$ be a sequent calculus of language $\mathcal{L}$, and $\mathcal{K}$ be a class of similar algebras of the same language. $\mathtt{C}$ is \emph{Gentzen algebraisable} with equivalent algebraic semantics $\mathcal{K}$ if the derivability relation $\vdash_{\mathtt{C}}$ of $\mathtt{C}$ and $\vdash_{\mathcal{K}}$ are equivalent as abstract consequence relations.
\end{itemize}
\end{definition}

As general and wide-ranging as it is, this definition does not quite capture another very natural notion of equivalence between logics, which is roughly an analogue of the algebraic relation of term equivalence between varieties (see e.g. \cite{Pynko, Gyuris, Caleiro} for different precisifications of this idea). The next special case, the only one which is needed for our current purposes, is a common instance of all these notions. For the concept of a translation (of which we also need here only a very special case), see e.g. \cite{Carnielli, Humberstone}.

\begin{definition}
Let $\mathrm{L}_{1}=\left\langle \mathbf{Fm}_{\mathcal{L}},\vdash _{\mathrm{L%
}_{1}}\right\rangle $ and $\mathrm{L}_{2}=\left\langle \mathbf{Fm}_{\mathcal{%
L}},\vdash _{\mathrm{L}_{2}}\right\rangle $ be two logics of language $\mathcal{L}$. A (definitional) \emph{translation} of $\mathrm{L}_{1}$ to $\mathrm{L}_{2}$ is a map $\tau$ such that $\tau(x) = x$ for all $x \in Var_{\mathcal{L}}$, and such that for any $n$-ary connective $g$ in $\mathcal{L}$ there is a (not necessarily primitive) connective $g^\tau$ in $\mathcal{L}$ such that for any ${\mathcal{L}}$-formulas $\varphi_1,...,\varphi_n$, $\tau(g(\varphi_1,...,\varphi_n)) = g^{\tau}(\tau(\varphi_1),...,\tau(\varphi_n))$.
\end{definition}

\begin{definition}\label{pincovalenza}
Let $\mathrm{L}_{1}=\left\langle \mathbf{Fm}_{\mathcal{L}},\vdash _{\mathrm{L%
}_{1}}\right\rangle $ and $\mathrm{L}_{2}=\left\langle \mathbf{Fm}_{\mathcal{%
L}},\vdash _{\mathrm{L}_{2}}\right\rangle $ be two logics of language $%
\mathcal{L}$. $\mathrm{L}_{1}$ and $\mathrm{L}_{2}$ are \emph{deductively
equivalent} if there exist two translations $\tau$ (of $\mathrm{L}_{1}$ to $\mathrm{L}_{2}$) and $\rho$ (of $\mathrm{L}_{2}$ to $\mathrm{L}_{1}$) such that for all $\Gamma \cup \left\{ \varphi \right\} \subseteq Fm_{%
\mathcal{L}}$,

\begin{enumerate}
\item $\Gamma \vdash _{\mathrm{L}_{1}}\varphi $ iff $\tau \left( \Gamma
\right) \vdash _{\mathrm{L}_{2}}\tau \left( \varphi \right) $;

\item $\tau \left( \rho \left( \varphi \right) \right) \dashv \vdash _{%
\mathrm{L}_{2}}\varphi $.
\end{enumerate}
\end{definition}

\subsection{Complements of universal algebra}\label{uaaal}

A thriving literature is available in universal algebra on varieties with a good theory of ideals (e.g., groups, rings, Boolean algebras). There is some consensus to the effect that such varieties, in case the language contains at least one constant, coincide with varieties that are both \emph{subtractive} and \emph{point-regular} (see e.g. \cite{GU}). The relevant definitions follow.

\begin{definition}
A variety $\mathcal{V}$, whose language $\mathcal{L}$ includes a constant $1$, is said to be:
\begin{itemize}
    \item $1$\emph{-subtractive} iff for any congruences $\theta ,\varphi $ on any $\mathbf{A} \in \mathcal{V}$, $1/\theta \circ \varphi = 1/\varphi \circ \theta$;
    \item $1$\emph{-regular} iff for any congruences $\theta ,\varphi $ on any $\mathbf{A} \in \mathcal{V}$, $1/\theta =1/\varphi $ implies $\theta =\varphi $;
    \item $1$\emph{-ideal determined} iff it is both $1$-subtractive and $1$-regular.
\end{itemize}
\end{definition}

We say that $\mathcal{V}$, of language $\mathcal{L}$, is subtractive (resp. point regular, ideal determined) if it is $1$-subtractive (resp. $1$-regular, $1$-ideal determined) for some constant $1$ in $\mathcal{L}$. All the above properties are Maltsev properties; moreover, they are crucially related to properties of the \emph{ideals} and of the \emph{assertional logics} of the varieties at issue. The universal algebraic definition of an ideal, as well as the definition of an assertional logic, are given below.

\begin{definition} \label{ideal term}
Let $\mathcal{V}$ be a variety whose language $\mathcal{L}$ includes a constant $1$.
\begin{enumerate}
\item A formula $\varphi \left(  \overrightarrow{x},\overrightarrow{y}\right)  $ of language $\mathcal{L}$ is a $\mathcal{V}$\emph{-ideal formula} in $\overrightarrow{x}$ iff $\vdash_{\mathcal{V}} \varphi \left(1,...,1,\overrightarrow{y}\right)  \approx 1$.

\item A nonempty subset $J$ of the universe of an $\mathbf{A}\in \mathcal{V}$ is
a $\mathcal{V}$\emph{-ideal} of $\mathbf{A}$ (w.r.t. $1$) iff for any
$\mathcal{V}$-ideal formula $\varphi \left(  \overrightarrow{x},\overrightarrow{y}\right)  $ in $\overrightarrow{x}$ we have that $\varphi^{\mathbf{A}}\left(
\overrightarrow{a},\overrightarrow{b}\right)  \in J$ whenever
$\overrightarrow{a}\in J$ and $\overrightarrow{b}\in A$.
\end{enumerate}
\end{definition}

\begin{definition}
Let $\mathcal{V}$ be a variety whose language $\mathcal{L}$ includes a
constant $1$. The $1$\emph{-assertional logic }of $\mathcal{V}$ is the logic 
$\mathrm{L}_{\mathcal{V}}=\left\langle \mathbf{Fm}_{\mathcal{L}},\vdash _{%
\mathrm{L}_{\mathcal{V}}}\right\rangle $, where%
\begin{equation*}
\Gamma \vdash _{\mathrm{L}_{\mathcal{V}}}\varphi \text{ iff }\left\{ \psi
\approx 1:\psi \in \Gamma \right\} \vdash _{\mathcal{V}}\varphi \approx 1%
\text{.}
\end{equation*}
\end{definition}

For $1$-subtractive varieties, we have the following result \cite{GU, OSV3}:

\begin{theorem}
\label{manzoni}For $\mathcal{V}$ a variety of algebras whose language $%
\mathcal{L}$ includes a constant $1$, the following are equivalent:

\begin{enumerate}
\item $\mathcal{V}$ is $1$-subtractive;

\item There is a binary formula $\varphi(x,y)$ in $Fm_{\mathcal{L}}$ such that $\vdash _{\mathcal{V}} \varphi(1,x) \approx x$ and $\vdash _{\mathcal{V}} \varphi(x,x) \approx 1$.

\end{enumerate}

\end{theorem}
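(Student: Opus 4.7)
For the easy direction $(2) \Rightarrow (1)$, I expect a routine Maltsev-style computation. Given a term $\varphi(x,y)$ satisfying $\varphi(1,x) \approx x$ and $\varphi(x,x) \approx 1$, and congruences $\theta, \psi$ on some $\mathbf{A} \in \mathcal{V}$, I would establish $1/(\theta \circ \psi) \subseteq 1/(\psi \circ \theta)$; the opposite inclusion follows by the same argument with $\theta$ and $\psi$ interchanged. For $a$ in the left-hand side, pick $c$ with $1 \mathrel{\theta} c \mathrel{\psi} a$ and set $d := \varphi(c,a)$; applying $\psi$ to $\varphi(c,c) \approx 1$ yields $1 \mathrel{\psi} d$, and applying $\theta$ to $\varphi(1,a) \approx a$ yields $d \mathrel{\theta} a$, placing $a$ in $1/(\psi \circ \theta)$.

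The harder direction $(1) \Rightarrow (2)$ calls for extracting the desired binary term from the free algebra on two generators. My plan is to work in $\mathbf{F} := \mathbf{F}_{\mathcal{V}}(x,y)$ and consider the principal congruences $\theta := \mathrm{Cg}^{\mathbf{F}}(x,1)$ and $\psi := \mathrm{Cg}^{\mathbf{F}}(x,y)$. Since $(1,x) \in \theta$ and $(x,y) \in \psi$, we immediately get $y \in 1/(\theta \circ \psi)$, so by $1$-subtractivity $y \in 1/(\psi \circ \theta)$. This produces an element of $F$ — necessarily representable by a binary term $t(x,y)$ — with $(1, t(x,y)) \in \psi$ and $(t(x,y), y) \in \theta$. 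The candidate witness is then $\varphi(x,y) := t(x,y)$.

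It remains to translate the two congruence memberships into identities of $\mathcal{V}$ via the standard free-algebra dictionary: the quotient $\mathbf{F}/\mathrm{Cg}^{\mathbf{F}}(x,1)$ is (isomorphic to) the free algebra on $y$ with $x$ replaced by $1$, so $(t(x,y), y) \in \theta$ becomes $\vdash_{\mathcal{V}} t(1,y) \approx y$; similarly $\mathbf{F}/\mathrm{Cg}^{\mathbf{F}}(x,y)$ collapses $x$ and $y$, so $(1, t(x,y)) \in \psi$ becomes $\vdash_{\mathcal{V}} t(x,x) \approx 1$. The step I expect to require the most care is precisely this dictionary — keeping straight which variable is being substituted for which in each of the two principal congruences, so that the extracted term carries the correct profile; once that bookkeeping is in place, the rest of the argument is essentially mechanical.
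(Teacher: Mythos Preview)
Your argument is correct and is the standard Maltsev-condition proof of this characterisation. Note, however, that the paper does not supply its own proof of this theorem: it is stated in the preliminaries as a known result, with a citation to Gumm--Ursini and Aglian\`o--Ursini, so there is no in-paper proof to compare against. What you have written is essentially the argument one finds in those references: the $(2)\Rightarrow(1)$ direction is the routine shuffle using the subtraction term, and the $(1)\Rightarrow(2)$ direction extracts the term from the free algebra on two generators by reading off the witness to $y\in 1/(\psi\circ\theta)$ and interpreting the two congruence memberships via the quotients $\mathbf{F}/\mathrm{Cg}(x,1)$ and $\mathbf{F}/\mathrm{Cg}(x,y)$. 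Your bookkeeping is in order.
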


For 1-regular varieties, we have instead \cite{Czelakowski, OSV3, Fichtner}:

\begin{theorem}
\label{lunare}For $\mathcal{V}$ a variety of algebras whose language $%
\mathcal{L}$ includes a constant $1$, the following are equivalent:

\begin{enumerate}
\item $\mathcal{V}$ is $1$-regular;

\item $\mathrm{L}_{\mathcal{V}}$ is algebraisable
with $\mathcal{V}$ as equivalent algebraic semantics;

\item There are binary formulas $\varphi _{1}(x,y),...,\varphi _{n}(x,y)$ in $Fm_{%
\mathcal{L}}$ such that $x\approx y\dashv \vdash _{%
\mathcal{V}}\varphi _{1}\left( x,y\right) \approx 1, ...,\varphi _{n}\left( x,y\right) \approx 1$.
\end{enumerate}
\end{theorem}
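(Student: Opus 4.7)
The plan is to establish $(1) \Leftrightarrow (3)$ via a Mal'tsev-style argument on congruence lattices of free algebras, and then deduce $(2) \Leftrightarrow (3)$ from the Blok-Pigozzi characterisation of algebraisability. For $(3) \Rightarrow (1)$, suppose $\varphi_1,\ldots,\varphi_n$ are as in (3) and let $\theta,\psi$ be congruences on some $\mathbf{A}\in\mathcal{V}$ with $1/\theta = 1/\psi$. Given $(a,b)\in\theta$, the half $x\approx y\vdash_\mathcal{V}\varphi_i(x,y)\approx 1$ applied in $\mathbf{A}/\theta$ yields $\varphi_i^\mathbf{A}(a,b)\in 1/\theta = 1/\psi$; the other half $\{\varphi_i(x,y)\approx 1\}_i\vdash_\mathcal{V} x\approx y$, read in $\mathbf{A}/\psi$, then delivers $(a,b)\in\psi$. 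Symmetry gives $\theta=\psi$.

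For $(1)\Rightarrow(3)$, work in the free $\mathcal{V}$-algebra $\mathbf{F}$ on two generators $x,y$ and let $\theta = \mathrm{Cg}^\mathbf{F}(x,y)$, $J = 1/\theta$. Set $\theta' = \mathrm{Cg}^\mathbf{F}(J\times\{1\})$: then $J\subseteq 1/\theta'$ by construction, and $1/\theta'\subseteq 1/\theta=J$ since $\theta'\subseteq\theta$, so $1/\theta = 1/\theta'$, forcing $\theta=\theta'$ by $1$-regularity, and in particular $(x,y)\in\theta'$. Compactness of finitely generated congruences now yields terms $t_1,\ldots,t_n\in J$ with $(x,y)\in\mathrm{Cg}^\mathbf{F}(\{(t_i,1):i\le n\})$. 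Since $\mathbf{F}/\theta$ is the free one-generated $\mathcal{V}$-algebra, $t_i\in J$ translates to $\vdash_\mathcal{V} t_i(x,x)\approx 1$, giving $x\approx y\vdash_\mathcal{V} t_i(x,y)\approx 1$; dually, the containment of $(x,y)$ in the congruence generated by the $(t_i,1)$'s reads as $\{t_i(x,y)\approx 1\}_i\vdash_\mathcal{V} x\approx y$. Thus $t_1,\ldots,t_n$ serve as the required $\varphi_i$'s.

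The equivalence $(2)\Leftrightarrow(3)$ follows from the Blok-Pigozzi criterion for algebraisability: the assertional logic $\mathrm{L}_\mathcal{V}$ is already presented with defining equations $\tau(\varphi)=\{\varphi\approx 1\}$, so its algebraisability with equivalent semantics $\mathcal{V}$ reduces to the existence of equivalence formulas $\rho(x,y)=\{\varphi_1(x,y),\ldots,\varphi_n(x,y)\}$ such that $x\approx y\dashv\vdash_\mathcal{V}\tau(\rho(x,y))$, which is exactly (3). I anticipate that the principal obstacle lies in step $(1)\Rightarrow(3)$: one must carefully identify the quotient $\mathbf{F}/\theta$ as the free one-generated algebra in order to re-read membership in $J$ as a universal identity of $\mathcal{V}$, and then invoke the compactness of finitely generated congruences to extract a finite set of generators from the a priori unbounded set $J\times\{1\}$.
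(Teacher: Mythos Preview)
The paper does not supply its own proof of this theorem; it is stated as a known result with citations to Czelakowski, Aglian\`{o}--Ursini, and Fichtner. Your argument is correct and is essentially the classical one found in those references: the equivalence $(1)\Leftrightarrow(3)$ is the Fichtner--Cs\'ak\'any Mal'tsev characterisation of point-regularity (your compactness argument in the free two-generated algebra is exactly how it is usually established), and $(2)\Leftrightarrow(3)$ follows immediately from the Blok--Pigozzi criterion once one observes that $\tau(\varphi)=\{\varphi\approx 1\}$ already serves as a system of defining equations for $\mathrm{L}_{\mathcal{V}}$.
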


The $1$-assertional logic $\mathrm{L}_{\mathcal{V}}$ of a $1$-regular
variety $\mathcal{V}$ can be effectively axiomatised provided an
axiomatisation of $\mathcal{V}$ and a system of equivalence formulas for
$\mathrm{L}$ and $\mathcal{V}$ are\ both known \cite[Thm. 8.0.9]{BR+}:

\begin{theorem}
\label{foscolo}Let $\mathcal{V}$ be a $1$-regular variety of
language $\mathcal{L}$, and let $\rho\left(  \varphi,\psi\right) $ be a system of equivalence formulas for
$\mathrm{L}_{\mathcal{V}}$ and $\mathcal{V}$. Then $\mathrm{L}_{\mathcal{V}}$ is axiomatised by the
following axioms and rules:

\begin{description}
\item[A1] $\vdash_{\mathrm{L}_{\mathcal{V}}}\rho\left(  \varphi,\varphi\right)  $;

\item[A2] $\varphi,\rho\left(  \varphi,\psi\right)  \vdash_{\mathrm{L}_{\mathcal{V}}}\psi$;

\item[A3] $\rho\left(  \varphi,\psi\right)  \vdash_{\mathrm{L}_{\mathcal{V}}}\rho\left(
\psi,\varphi\right)  $;

\item[A4] For each $k$-ary $\mathcal{L}$-connective $c^{k}$, $\bigcup
\nolimits_{i\leq k}\rho\left(  \varphi_{i},\psi_{i}\right)  \vdash
_{\mathrm{L}_{\mathcal{V}}}\rho\left(  c^{k}\left(  \overrightarrow{\varphi}\right)
,c^{k}\left(  \overrightarrow{\psi}\right)  \right)  $;

\item[A5] $\varphi\dashv\vdash_{\mathrm{L}_{\mathcal{V}}}\rho\left(  
\varphi,1  \right)  $;

\item[A6] For each axiom of $\mathcal{V}$ $\varphi\approx\psi$,
\[
\vdash_{\mathrm{L}_{\mathcal{V}}}\rho\left(  \varphi,\psi\right)  \text{.}%
\]

\end{description}
\end{theorem}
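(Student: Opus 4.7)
My plan is to prove soundness and completeness of the axiomatisation separately. For soundness: by Theorem \ref{lunare}, $\rho$ being a system of equivalence formulas means $x \approx y \dashv\vdash_{\mathcal{V}} \{r(x,y) \approx 1 : r \in \rho\}$. Reading each axiom A1--A6 through the translation sending $\varphi$ to $\varphi \approx 1$ identifies it with a valid equational principle in $\mathcal{V}$: A1 is reflexivity of $\approx$; A2 combines $\varphi \approx 1$ and $\varphi \approx \psi$ to yield $\psi \approx 1$; A3 is symmetry; A4 is compatibility with the $\mathcal{L}$-operations; A5 records the defining property of $\rho$ specialised to $y = 1$; A6 collects the axioms of $\mathcal{V}$ themselves.

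\textbf{Completeness.} Let $\vdash^{\ast}$ denote the consequence relation generated by A1--A6. Suppose $\Gamma \vdash_{\mathrm{L}_{\mathcal{V}}} \varphi$, that is, $\{\psi \approx 1 : \psi \in \Gamma\} \vdash_{\mathcal{V}} \varphi \approx 1$. By Birkhoff's completeness theorem for equational logic, there is a finite derivation of the latter using reflexivity, symmetry, transitivity, compatibility with operations, substitution, the hypotheses, and the axioms of $\mathcal{V}$. I would mimic this derivation inside $\vdash^{\ast}$ by representing each equation $\alpha \approx \beta$ by the set of formulas $\rho(\alpha, \beta)$: the hypotheses $\psi \approx 1$ are obtained from $\psi \in \Gamma$ via one direction of A5; reflexivity, symmetry, congruence, and variety axioms are handled respectively by A1, A3, A4, A6; and substitution is absorbed into the substitution-invariance of $\vdash^{\ast}$. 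The terminal equation $\varphi \approx 1$ is then represented by $\rho(\varphi, 1)$, and the reverse direction of A5 delivers $\varphi$.

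\textbf{Main obstacle: transitivity of $\rho$.} The only non-routine step in the simulation is a derived transitivity rule $\rho(\alpha,\beta), \rho(\beta,\gamma) \vdash^{\ast} \rho(\alpha,\gamma)$. I would first establish, by induction on the complexity of an arbitrary $\mathcal{L}$-formula $r(\vec{x})$, the replacement lemma
\[
\bigcup_{i} \rho(\alpha_i, \beta_i) \vdash^{\ast} \rho(r(\vec{\alpha}), r(\vec{\beta})),
\]
with A1 handling the base cases (variables and constants) and A4 the inductive step. Applied to each $r(x,y) \in \rho$ with $y$ held fixed at $\gamma$, this yields $\rho(\alpha,\beta) \vdash^{\ast} \rho(r(\alpha,\gamma), r(\beta,\gamma))$; since $r(\beta,\gamma)$ lies in the hypothesis set $\rho(\beta,\gamma)$, successive applications of A3 and A2 produce $r(\alpha,\gamma)$ for every $r \in \rho$, and hence $\rho(\alpha,\gamma)$. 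Everything else is pure bookkeeping.
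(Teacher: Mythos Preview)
The paper does not actually prove this theorem: it is quoted from the literature (Blok and Raftery, \cite[Thm.~8.0.9]{BR+}) and stated without argument. So there is no in-paper proof to compare against.

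That said, your proposal is correct and is essentially the standard argument. The soundness direction is immediate from the defining property of $\rho$, and your completeness strategy---simulate a Birkhoff equational derivation of $\varphi\approx 1$ from $\{\psi\approx 1:\psi\in\Gamma\}$ by representing each equation $\alpha\approx\beta$ as the set $\rho(\alpha,\beta)$---goes through cleanly. The one point that needs care is the derived transitivity rule, and your treatment is fine: the replacement lemma follows by induction from A1 and A4, and then from $\rho(\alpha,\beta)\vdash^{\ast}\rho(r(\alpha,\gamma),r(\beta,\gamma))$ together with the hypothesis $r(\beta,\gamma)\in\rho(\beta,\gamma)$, A3 followed by A2 yields $r(\alpha,\gamma)$ for every $r\in\rho$. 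There is no circularity, since the replacement lemma itself uses only A1 and A4.
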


Clearly, ideal determined varieties possess the desiderable features of both subtractive and point-regular varieties. In particular, ideals correspond bijectively to congruences in any member of such \cite{BlokP, BlokR, GU}:

\begin{theorem}
\label{terreno}Let $\mathcal{V}$ be a $1$-ideal determined variety, and let $\mathbf{A} \in \mathcal{V}$. The following lattices are isomorphic:

\begin{enumerate}
\item The lattice of all $\mathcal{V}$-ideals (w.r.t. $1$) of $\mathbf{A}$;

\item the lattice of all deductive filters on $\mathbf{A}$ of the $1$-assertional logic of $\mathcal{V}$;

\item the lattice of congruences of $\mathbf{A}$.
\end{enumerate}
\end{theorem}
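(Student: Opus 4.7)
The plan is to place the congruence lattice in the middle and build natural bijections with both the ideal lattice and the filter lattice, then verify preservation of lattice operations. The congruence-to-ideal direction is given by the 1-block map $\theta \mapsto 1/\theta$. That this map lands in the lattice of $\mathcal{V}$-ideals is a routine check from Definition~\ref{ideal term}: whenever $\varphi(\vec{x},\vec{y})$ is a $\mathcal{V}$-ideal formula in $\vec{x}$ and $\vec{a} \in 1/\theta$, $\vec{b} \in A$, then $\varphi^{\mathbf{A}}(\vec{a}, \vec{b}) \; \theta \; \varphi^{\mathbf{A}}(\vec{1}, \vec{b}) = 1$, so $1/\theta$ is closed under ideal formulas, and $1 \in 1/\theta$. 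Injectivity of this map is exactly the hypothesis of 1-regularity.

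The substantive step is surjectivity. Given a $\mathcal{V}$-ideal $J$, I would produce a congruence $\theta_J$ with $1/\theta_J = J$. Letting $\varphi_1,\dots,\varphi_n$ be the equivalence formulas provided by Theorem~\ref{lunare}, define
\[
\theta_J = \{(a,b) \in A^2 : \varphi_i^{\mathbf{A}}(a,b) \in J \text{ for all } i \le n\}.
\]
Reflexivity, symmetry, transitivity, and compatibility with the basic operations of $\mathbf{A}$ each follow by exhibiting witnesses that are $\mathcal{V}$-ideal formulas, so closure under them is inherited from $J$ being an ideal (this is where 1-subtractivity, via the subtraction term of Theorem~\ref{manzoni}, supplies the required translations between ideal-membership and equational equivalence). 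The identity $1/\theta_J = J$ follows from clause A5 of Theorem~\ref{foscolo}: an element $a$ lies in the class $1/\theta_J$ precisely when $\varphi_i^{\mathbf{A}}(a,1) \in J$ for all $i$, which, via the interdeducibility $x \dashv\vdash_{\mathrm{L}_{\mathcal{V}}} \rho(x,1)$, is in turn equivalent to $a \in J$.

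The bridge between (2) and (3) follows from Theorem~\ref{lunare}: 1-regularity makes $\mathrm{L}_{\mathcal{V}}$ algebraisable with equivalent algebraic semantics $\mathcal{V}$, and the standard bridge theorem for algebraisable logics (see \cite{BlokP, BR+}) delivers a lattice isomorphism between the lattice of deductive filters of $\mathrm{L}_{\mathcal{V}}$ on $\mathbf{A}$ and $\mathrm{Con}(\mathbf{A})$, which sends a filter $F$ to its Leibniz congruence, whose 1-block coincides with $F$ itself. Composing this with the (1)--(3) bijection identifies each $\mathcal{V}$-ideal $J$ with the deductive filter whose underlying set is $J$.

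The main obstacle I anticipate is the surjectivity step above -- verifying that the explicitly defined relation $\theta_J$ really is a congruence and that its 1-block is $J$; this is the place where both Maltsev conditions (1-subtractivity and 1-regularity) enter in an essential way. Once this is in hand, preservation of meets and joins across the three lattices reduces to the observation that all three structures may be described as 1-blocks of congruences, so their lattice operations are ultimately inherited from $\mathrm{Con}(\mathbf{A})$.
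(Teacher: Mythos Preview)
The paper does not prove Theorem~\ref{terreno}; it is stated as a background result with citations to \cite{BlokP, BlokR, GU}, so there is no in-paper proof to compare against. Your sketch follows the standard Gumm--Ursini/Blok--Raftery line from those references: the $1$-block map $\theta\mapsto 1/\theta$ is injective by $1$-regularity, and surjectivity is obtained by defining $\theta_J$ via the regularity witnesses $\varphi_i$ and checking that ideal-closure of $J$ makes $\theta_J$ a congruence with $1/\theta_J=J$; the filter--congruence isomorphism then comes from algebraisability and the isomorphism theorem of \cite{BlokP}. This is the right architecture, and you correctly flag surjectivity as the substantive step.

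One point to tighten: your appeal to clause A5 of Theorem~\ref{foscolo} for $1/\theta_J=J$ is slightly indirect, since that theorem concerns axiomatising $\mathrm{L}_{\mathcal{V}}$ rather than ideal membership. What you actually need is that both directions of the interderivability $x \dashv\vdash_{\mathrm{L}_{\mathcal{V}}}\rho(x,1)$ are witnessed by $\mathcal{V}$-ideal formulas (one direction uses the subtraction term $s$ with $s(x,1)\approx x$, the other uses $\varphi_i(1,1)\approx 1$), so that $a\in J$ iff each $\varphi_i^{\mathbf{A}}(a,1)\in J$. Similarly, compatibility of $\theta_J$ with the basic operations is not merely a matter of ``exhibiting ideal formulas'' in the abstract: one needs the specific fact (true in any variety) that from $\varphi_i(a,b)\approx 1$ one can derive $\varphi_i(f(\ldots a\ldots),f(\ldots b\ldots))\approx 1$ equationally, hence the corresponding terms are ideal formulas in the $\varphi_i$-slots. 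These are routine once stated, but the proposal would benefit from making them explicit rather than gesturing at Theorems~\ref{manzoni} and~\ref{foscolo}.
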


Varieties with equationally definable principal congruences (EDPC) were introduced by Fried, Gr\"atzer and Quackenbush \cite{FGQ} and extensively studied in algebraic logic as equivalent algebraic semantics of algebraisable logics with the deduction-detachment theorem \cite{KP, EDPC1, EDPC2, EDPC3, EDPC4}. Among the varieties with EDPC, a prominent role is played by varieties with a \emph{quaternary deductive (QD) term}, the latter being a generalisation of the quaternary discriminator (normal transform) to non-semisimple varieties. We start by recalling the relevant definitions.

\begin{definition}\label{EDPCandstuff}
Let $\mathcal{V}$ be a variety of language $\mathcal{L}$. We say that:

\begin{itemize}
\item $\mathcal{V}$ has \emph{equationally definable principal congruences
(EDPC)} if there exist $\mathcal{L}$-identities $\varphi _{1}\approx \psi
_{1},...,\varphi _{n}\approx \psi _{n}$ in the variables $x,y,z,w$ such that
for any $\mathbf{A}\in \mathcal{V}$ and any $a,b,c,d\in A$,%
\begin{equation*}
\left\langle c,d\right\rangle \in \theta ^{\mathbf{A}}\left( a,b\right) 
\text{ iff }\varphi _{i}^{\mathbf{A}}\left( a,b,c,d\right) =\psi _{i}^{%
\mathbf{A}}\left( a,b,c,d\right)
\end{equation*}

for each $i\leq n$;

\item $\mathcal{V}$ has a \emph{quaternary deductive (QD) term} if there
exist an $\mathcal{L}$-formula $\varphi $ in the variables $x,y,z,w$ such
that for any $\mathbf{A}\in \mathcal{V}$ and any $a,b,c,d\in A$,%
\begin{equation*}
\varphi ^{\mathbf{A}}\left( a,b,c,d\right) =\left\{ 
\begin{array}{l}
c\text{ if }a=b\text{;} \\ 
d\text{ if }\left\langle c,d\right\rangle \in \theta ^{\mathbf{A}}\left(
a,b\right).%
\end{array}%
\right.
\end{equation*}
\end{itemize}
\end{definition}

The results in the next theorem, variously due to \cite{EDPC1, EDPC2, KP, vA}, collect the main properties of varieties with a QD term.

\begin{theorem}\label{marziano}
Let $\mathcal{V}$ be a variety with a QD term. Then:

\begin{enumerate}
\item $\mathcal{V}$ is congruence permutable and has EDPC.

\item The join semilattice of compact congruences of any $\mathbf{A}\in 
\mathcal{V}$ is dually relatively pseudocomplemented, namely, there exists a binary operation $\ast$ such that, for any compact members $\theta,\delta,\gamma$ of $\mathrm{Con}(\mathbf{A})$ one has:
\[\theta\subseteq\delta\lor\gamma\quad\Leftrightarrow\quad\delta\ast\theta\subseteq\gamma.\].
\item If $\mathcal{V}$ is $1$-regular, then every compact congruence of any $%
\mathbf{A}\in \mathcal{V}$ is principal.
\end{enumerate}
\end{theorem}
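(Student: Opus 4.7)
The plan is to exploit the defining clauses of the QD term $q(x,y,z,w)$ throughout: recall that $q^{\mathbf A}(a,b,c,d) = c$ when $a=b$, and $q^{\mathbf A}(a,b,c,d) = d$ whenever $\langle c,d\rangle \in \theta^{\mathbf A}(a,b)$.

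For Part (1), I would exhibit a Maltsev term explicitly: set $p(x,y,z) := q(y,z,x,z)$. Then $p(x,y,y) = q(y,y,x,y) = x$ by the first QD clause, while $p(x,x,y) = q(x,y,x,y) = y$ by the second clause (since $\langle x,y\rangle \in \theta(x,y)$ trivially). This gives congruence permutability. For EDPC, the single identity $q(x,y,z,w) \approx w$ characterises principal congruences: one direction is exactly the second QD clause, and for the converse, if $q^{\mathbf A}(a,b,c,d) = d$ then modulo $\theta^{\mathbf A}(a,b)$ we compute $d = q^{\mathbf A}(a,b,c,d) \equiv q^{\mathbf A}(a,a,c,d) = c$, whence $\langle c,d\rangle \in \theta^{\mathbf A}(a,b)$.

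For Part (2), on principal congruences I define $\theta(a,b) \ast \theta(c,d) := \theta(q(a,b,c,d), d)$. The residuation law on principals unfolds as follows: $\theta(c,d) \subseteq \theta(a,b) \lor \gamma$ iff $\langle[c],[d]\rangle \in \theta^{\mathbf A/\gamma}([a],[b])$ iff (by the EDPC identity from Part (1), applied in $\mathbf A / \gamma$) $[q(a,b,c,d)] = [d]$ in $\mathbf A / \gamma$, iff $\theta(q(a,b,c,d),d) \subseteq \gamma$. Extending $\ast$ to arbitrary compact congruences is then formal: the identities $\delta \ast (\theta_1 \lor \theta_2) = (\delta \ast \theta_1) \lor (\delta \ast \theta_2)$ and $(\delta_1 \lor \delta_2) \ast \theta = \delta_1 \ast (\delta_2 \ast \theta)$ follow directly from the residuation on principals.

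For Part (3), I anticipate the most substantial work. Congruence permutability from Part (1) together with $1$-regularity gives $1$-ideal determinacy (permutability is automatically $1$-subtractive, since $\theta \circ \phi = \phi \circ \theta$ forces $1/(\theta \circ \phi) = 1/(\phi \circ \theta)$), so by Theorem \ref{terreno} congruences of any $\mathbf A \in \mathcal V$ correspond bijectively to $\mathcal V$-ideals. By Theorem \ref{lunare} applied to $\mathcal V$, any principal congruence decomposes as $\theta(a,b) = \bigvee_{i} \theta(\varphi_i(a,b),1)$, so every compact congruence is in fact a finite join of $1$-principal ones. It therefore suffices to show that $\theta(a,1)\lor\theta(b,1) = \theta(s(a,b),1)$ for some binary term $s$, i.e.\ to exhibit a conjunction-like operation in the assertional logic of $\mathcal V$. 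The main obstacle is the explicit construction of $s$: Part (1)'s EDPC identity gives a single-formula implication $a \to b := q(a,1,b,1)$ witnessing a deduction--detachment theorem for $\mathrm{L}_{\mathcal V}$, and combining this with the subtraction term of Theorem \ref{manzoni} (available by $1$-subtractivity) should yield the required $s$. Once $s$ is identified, the equality $\theta(a,1)\lor\theta(b,1) = \theta(s(a,b),1)$ is verified by $1$-ideal determinacy: both congruences have the same $1$-class (the ideal generated by $\{a,b\}$), whence they coincide.
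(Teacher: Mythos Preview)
The paper does not actually prove this theorem: it is stated as a collection of results ``variously due to \cite{EDPC1, EDPC2, KP, vA}'' and no argument is supplied. So there is no paper proof to compare against; I can only assess your proposal on its own merits.

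Parts (1) and (2) are correct and essentially the standard arguments. Your Maltsev term $p(x,y,z)=q(y,z,x,z)$ and your single EDPC identity $q(x,y,z,w)\approx w$ both work as claimed, and your verification of the residuation law on principal congruences via the quotient $\mathbf A/\gamma$ is exactly the right idea (and matches the formula the paper records immediately after the theorem). The extension of $\ast$ to arbitrary compact congruences is a little brisk --- you are using the distributive identities to \emph{define} $\ast$, so you should remark that well-definedness follows from the uniqueness of residuals --- but this is easily patched.

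Part (3), however, has a genuine gap. Your reduction is sound: every compact congruence is a finite join of congruences $\theta(c,1)$, so it suffices to produce a binary term $s$ with $\theta(a,1)\lor\theta(b,1)=\theta(s(a,b),1)$. But you do not construct $s$; you only assert that ``combining'' the implication $a\to b:=q(a,1,b,1)$ with the subtraction term of Theorem~\ref{manzoni} ``should yield'' it. This is not an argument, and in fact it cannot work as stated: the term $q(x,1,y,1)$ already satisfies the subtraction identities $\varphi(1,y)\approx y$ and $\varphi(y,y)\approx 1$, so invoking Theorem~\ref{manzoni} adds nothing new. An implication-like operation alone need not determine a conjunction (think of Hilbert algebras, where meets need not exist), so some further use of the QD term is required. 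Your closing sentence is also circular: you cannot verify that $\theta(s(a,b),1)$ has $1$-class equal to the ideal generated by $\{a,b\}$ without first knowing what $s$ is. The actual construction of $s$ --- showing that in a $1$-regular variety whose EDPC is witnessed by a single equation the principal congruences are closed under join --- is the substantive content of the cited Blok--Pigozzi results, and it is precisely this construction that your sketch omits.
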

For future use, we specify that the dual relative pseudocomplement $\ast$ of Theorem \ref{marziano}.(2) is such that
\[
\theta ^{\mathbf{A}} (a,b) \ast \theta ^{\mathbf{A}} (c,d) = \theta ^{\mathbf{A}} (\varphi^{\mathbf{A}} (a,b,c,d),d),
\]
where $\varphi$ is the QD term for $\mathcal{V}$.

By Theorem \ref{terreno}, in an algebra $\mathbf{A}$ belonging to a $1$-ideal determined variety $\mathcal{V}$, $\mathcal{V}$-ideals of $\mathbf{A}$ correspond to congruence classes of $1$, in a way that yields a bijective correspondence between the lattice of such $\mathcal{V}$-ideals and the lattice of congruences of $\mathbf{A}$. In light of Theorem \ref{marziano}.(2)-(3), the join-semilattice of principal $\mathcal{V}$-ideals of $\mathbf{A}$ is dually relatively pseudocomplemented.

\section{Connexive Heyting algebras}\label{Brouwethius}

In this section we introduce the variety of \emph{connexive Heyting algebras}, study its properties, and establish its term equivalence with the variety of Heyting algebras. Recall from our introduction that we aim at finding a strongly connexive logic with an intuitive semantics which is, moreover, algebraisable according to Definition \ref{algebru}. The results collected in Subsection \ref{uaaal} suggest the following \textquotedblleft recipe" for obtaining an algebraisable connexive logic:

\begin{itemize}
\item Consider a language $\mathcal{L}$ containing (at least) a negation $\lnot $ and an implication $\rightarrow $ (primitive or definable) and a constant $1$.

\item Define a variety $\mathcal{V}$ of language $\mathcal{L}$ and make sure that the different versions of Aristotle's and Boethius' laws evaluate at $1$ in each $\mathbf{A}\in \mathcal{V}$.

\item Make sure that the symmetry of implication has a counterexample in some $\mathbf{A}\in \mathcal{V}$.

\item Encode enough properties into $\rightarrow $ so that the set $\left\{\varphi \rightarrow \psi ,\psi \rightarrow \varphi \right\} $ witnesses $1$-regularity for $\mathcal{V}$.

\item Apply Theorem \ref{lunare} to the $1$-assertional logic of $\mathcal{V}$.
\end{itemize}

Our candidate $\mathcal{V}$ is the variety $\mathcal{CHA}$ of connexive Heyting algebras, to be defined below. It is a subvariety of \emph{semi-Heyting algebras}, an important and well-studied class introduced by Sankappanavar in 2007 \cite{Sanka} and investigated e.g. in \cite{DCV, CV}.

\subsection{Definition and elementary properties}

Let $\mathcal{L}_{CH}$ the language $\left\langle 2,2,2,0,0\right\rangle $, whose operation symbols are respectively denoted by $\land$ (meet), $\lor$ (join), $\rightarrow$ (implication), $0$ (falsity) and $1$ (truth). The following identities of language $\mathcal{L}_{CH}$ will be considered in what follows ($\lnot x$ is short for $x\rightarrow 0$):

\begin{description}
\item[C1] $(x\rightarrow y)\rightarrow ((y\rightarrow z)\rightarrow
(x\rightarrow z))\approx 1$;

\item[C2] $(x\rightarrow y)\rightarrow \lnot (x\rightarrow \lnot y))\approx 1$;

\item[C3] $x\wedge (x\rightarrow y)\approx x\wedge y$;

\item[C4] $x\rightarrow y\leq \left( z\wedge x\right) \rightarrow (z\wedge y)$;

\item[C5] $x\rightarrow y\leq \left( z\vee x\right) \rightarrow (z\vee y)$;

\item[C6] $x \land (y \rightarrow z) \approx x \land ((x \land y) \rightarrow (x \land z)$;

\item[C7] $x\rightarrow x \approx 1$.

\end{description}
\begin{definition}\label{putacusu}
A \emph{semi-Heyting algebra} is an algebra $\mathbf{A}=\left\langle
A,\wedge ,\vee ,\rightarrow ,0,1\right\rangle $ of language $\mathcal{L}%
_{CH}$ such that:
\begin{itemize}
    \item $\left\langle A,\wedge ,\vee ,0,1\right\rangle $ is a distributive
lattice with bottom element $0$, top element $1$, and induced order $\leq$;
    \item the identities C3, C6, and C7 hold.
\end{itemize}
 \end{definition}

The next lemma is proved in \cite{Sanka}.

\begin{lemma}\label{lem:connexalgarithm1}
Let $\alga$ be a semi-Heyting algebra. The following hold, for any $a,b\in A$:
\begin{multicols}{2}
\begin{enumerate}
\item $1\rightarrow a = a$;
\item $a\rightarrow b = 1$ implies $a \leq b$;
 \item $a \leq b\to (a\land b)$;
 \item$a\leq\lnot b$ if and only if $a\land b=0$;
 \item $a \leq a\to 1$;
 \item$a\leq (a\to b)\to b$;
\item $a \leq \lnot \lnot a $;
\item $a \land \lnot a = 0$;
\item$a\to 0 \leq 0\to a$;
\item$\neg a=\neg\neg\neg a$.
\end{enumerate}
\end{multicols}
Moreover, the proof of items (1), (2), (3), (5) and (6) does not depend on C6 or C7.
\end{lemma}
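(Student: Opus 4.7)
The plan is to chain the three semi-Heyting axioms C3, C6, C7 together with the distributive lattice laws, dispatching the easy items first so that they can be used to settle the harder ones. The governing heuristic is that C6 lets one replace $y \to z$ by $(x \land y) \to (x \land z)$ ``modulo $x$'', and whenever the inner implication becomes reflexive C7 collapses it to $1$, so that a subsequent meet with $x$ reduces to $x$ itself.

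Items (1), (2), (5) and (8) each follow from a single instance of C3: plug in $x=1$, exploit the hypothesis $a\to b=1$, take $y=1$, or take $y=0$, respectively. Items (3) and (6) are the load-bearing steps. For (3), I would apply C6 with $x=a$, $y=b$, $z=a\land b$ so that the right-hand side becomes $a\land((a\land b)\to(a\land b))$, and then use C7 to collapse the inner implication to $1$, yielding $a\land(b\to(a\land b))=a$. For (6), apply C6 with $x=a$, $y=a\to b$, $z=b$, use C3 to rewrite $a\land(a\to b)$ as $a\land b$, and again invoke C7 to reduce the resulting $(a\land b)\to(a\land b)$ to $1$.

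The remaining items fall out as corollaries. Item (4) is immediate from (3) and (8) in both directions. Item (7) is (6) instantiated at $b=0$. For (9), applying C6 with $x=\lnot a$, $y=0$, $z=a$ together with (8) shows that both $\lnot a\land 0$ and $\lnot a\land a$ vanish, so the inner implication reduces to $0\to 0=1$. For (10) I would first extract antitonicity of $\lnot$ from (4) and (8) --- if $a\leq b$ then $a\land\lnot b\leq b\land\lnot b=0$, so $\lnot b\leq\lnot a$ by (4) --- and then combine (7) with antitonicity to sandwich $\lnot\lnot\lnot a$ between $\lnot a$ and itself.

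The main obstacle is the combinatorial bookkeeping of choosing the right C6 instance at each step, rather than any deep structural insight. A subtlety is that the lemma asserts items (3) and (6) can be proved without invoking C6 or C7, whereas my strategy uses both; I would want to consult Sankappanavar's original argument for the C3-only proof, since I do not immediately see how to bound $b\to(a\land b)$ from below by $a$ using only C3 and the lattice laws.
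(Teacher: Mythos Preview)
The paper does not actually prove this lemma: it simply cites Sankappanavar's paper \cite{Sanka}. So there is no in-house argument to compare yours against. Your derivations of all ten items are correct in any semi-Heyting algebra, and your C6/C7 instantiations for (3) and (6) are exactly the natural ones.

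Your closing worry is not a gap in \emph{your} proof but a genuine issue with the lemma's ``Moreover'' clause as literally stated. Items (3) and (6) do \emph{not} follow from C3 and the bounded distributive lattice axioms alone. A counterexample: on the three-element chain $0<a<1$, define $1\to y=y$, $a\to 0=0$, $a\to a=a\to 1=a$, and $0\to y=0$ for all $y$. Then C3 holds, but $0\to 0=0$, so $a\nleq 0\to(a\land 0)$ and $a\nleq(a\to 0)\to 0$, falsifying both (3) and (6). Thus there is no C3-only proof to be found in Sankappanavar or anywhere else.

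What saves the paper's subsequent use of (3) and (6) in Lemma~2 is that the ambient structures there are \emph{connexive} Heyting algebras, where C1--C5 are available even before C6 and C7 are established. In that setting one gets (6) directly from C1 with $x=1$, $y=a$, $z=b$ together with items (1) and (2); and (3) from C4 with $x=y=1$, $z=b$ (after first deriving item (1)), since $a=1\to a\leq(b\land 1)\to(b\land a)=b\to(a\land b)$. So the intended content of the ``Moreover'' clause is that (1), (2), (3), (5), (6) are available in connexive Heyting algebras prior to verifying C6 and C7 --- not that they follow from C3 in isolation. Your instinct that something was off was correct.
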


We now present the notion that will be at the centre of the present paper.

\begin{definition}\label{putacaso}
A \emph{connexive Heyting algebra} is an algebra $\mathbf{A}=\left\langle
A,\wedge ,\vee ,\rightarrow ,0,1\right\rangle $ of language $\mathcal{L}_{CH}$ such that:

\begin{itemize}
\item $\left\langle A,\wedge ,\vee ,0,1\right\rangle $ is a distributive
lattice with bottom element $0$, top element $1$, and induced order $\leq$;
\item the identities C1, C2, C3, C4, and C5 hold.
 \end{itemize}
 
\end{definition}

Connexive Heyting algebras form a variety, hereafter noted $\mathcal{CHA}$. We show that $\mathcal{CHA}$ is a subvariety of the variety of semi-Heyting algebras.

\begin{lemma}
Every connexive Heyting algebra is a semi-Heyting algebra.
\end{lemma}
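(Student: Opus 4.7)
To show that every connexive Heyting algebra is a semi-Heyting algebra, it suffices to derive the identities C6 and C7 from the axioms C1--C5 (the distributive lattice reduct and identity C3 are common to both axiomatisations). Throughout I will use items (1), (2), (3), (5) and (6) of Lemma~\ref{lem:connexalgarithm1}, whose proofs, by the last clause of that lemma, do not depend on C6 or C7.

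The identity C7 ($x \rightarrow x \approx 1$) follows in a single step. Instantiating C4 with $x = y = 1$ yields $1 \rightarrow 1 \leq z \rightarrow z$; since item (1) of Lemma~\ref{lem:connexalgarithm1} gives $1 \rightarrow 1 = 1$, we conclude $z \rightarrow z = 1$.

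For C6, namely $x \land (y \rightarrow z) \approx x \land ((x \land y) \rightarrow (x \land z))$, one direction is immediate from C4: from $y \rightarrow z \leq (x \land y) \rightarrow (x \land z)$, meeting both sides with $x$ gives $x \land (y \rightarrow z) \leq x \land ((x \land y) \rightarrow (x \land z))$. The converse is the bulk of the work. The strategy is to chain applications of C1, each anchored on item (3) of Lemma~\ref{lem:connexalgarithm1} (namely $x \leq v \rightarrow (x \land v)$), to push the factor $x$ inside the nested implications, and then contract by C3. Concretely, C1 instantiated with $a = y$, $b = x \land y$, $c = x \land z$ gives $y \rightarrow (x \land y) \leq ((x \land y) \rightarrow (x \land z)) \rightarrow (y \rightarrow (x \land z))$; combining this with $x \leq y \rightarrow (x \land y)$ from item (3), meeting with $(x \land y) \rightarrow (x \land z)$, and invoking C3 delivers $x \land ((x \land y) \rightarrow (x \land z)) \leq y \rightarrow (x \land z)$. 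A second application of C1, this time with $a = y$, $b = x \land z$, $c = z$, is then used to transfer $y \rightarrow (x \land z)$ to $y \rightarrow z$ modulo a meet with $x$.

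The main obstacle will be an auxiliary inequality of the form $x \leq (x \land z) \rightarrow z$, which is needed to close the last reduction step: it is not a direct consequence of item (3) or item (6) of Lemma~\ref{lem:connexalgarithm1} in isolation. I expect it to be forced by a further careful use of C1 combined with items (3) and (6) (and possibly by exploiting the genuinely connexive axiom C2, which among other things forces $a \rightarrow \lnot a \approx 0$ and $(a \rightarrow b) \land (a \rightarrow \lnot b) \approx 0$). Once this auxiliary link is forged, the two C1-reductions chain together to give the converse inequality of C6, completing the proof.
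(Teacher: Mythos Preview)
Your overall strategy mirrors the paper's: both derive C7 first and then prove C6 by getting one inequality from C4 and the other by a transitivity-style reduction through $y \rightarrow (x \land y)$ and $(x \land z) \rightarrow z$. Your derivation of C7 via C4 (instantiate $x=y=1$) is even slicker than the paper's, which uses C1 and Lemma~\ref{lem:connexalgarithm1}(1)--(2).

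There is, however, a genuine gap where you flag the ``main obstacle'': the auxiliary inequality $x \leq (x \land z) \rightarrow z$. You speculate that C1, items (3) and (6), or possibly C2 will be needed, but in fact none of that machinery is required. The missing step is a one-line application of C4 together with item (5) of Lemma~\ref{lem:connexalgarithm1}: from $x \leq x \rightarrow 1$ and C4 (with $x,y,z$ taken as $x,1,z$) one gets $x \rightarrow 1 \leq (z \land x) \rightarrow (z \land 1) = (x \land z) \rightarrow z$, hence $x \leq (x \land z) \rightarrow z$. This is exactly how the paper closes the corresponding step. Once you plug this in, your two C1-reductions chain exactly as you describe, and C2 plays no role in the argument.
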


\begin{proof}
It suffices to show that C6 and C7 hold in every connexive Heyting algebra. We will be free to use items (1), (2), (5) and (6) in Lemma \ref{lem:connexalgarithm1}, which, as already observed, do not depend on either C6 or C7. Observe, moreover, that if $\alga$ is a connexive Heyting algebra and $d,e,f\in A$, we have that $(d\rightarrow e)\land(e\rightarrow f)\leq d\rightarrow f$. 

Now, let again $\alga$ be a connexive Heyting algebra and $a,b,c\in A$. By C4 $a \land (b \rightarrow c) \leq a \land ((a \land b) \rightarrow (a \land c))$. Conversely,
\[%
\begin{array}
[c]{lll}
   a \land ((a \land b) \rightarrow (a \land c))&= a \land (b \rightarrow(a \land b))\land((a \land b) \rightarrow (a \land c))& \text{Lm. \ref{lem:connexalgarithm1}.(3)-(6)}\\
   &\leq a \land (b \rightarrow(a \land c))& \\
   &= a \land (a \rightarrow 1) \land (b \rightarrow(a \land c))& \text{Lm. \ref{lem:connexalgarithm1}.(5)}\\
   &\leq a \land ((a \land c)\rightarrow c) \land (b \rightarrow(a \land c))& \text{C4}\\
   &\leq a \land (b \rightarrow c). &\\
\end{array}
\]
Hence C6 holds. For C7,  by Lemma \ref{lem:connexalgarithm1}.(1)-(2) and C1, $1=1\rightarrow 1\leq(1\rightarrow a)\rightarrow(1\rightarrow a)=a\rightarrow a$.
\end{proof}

We provide a finite example of a connexive Heyting algebra (called L9 by Sankappanavar \cite[Thm. 4.1]{Sanka} and also mentioned by Kapsner and Omori, see \cite{Kapsomo})) showing both that this class is nonempty and that implication, in general, fails to be symmetric therein. 

\begin{example}\label{treelementi} Consider the $3$-element bounded chain $\mathbf{KO}_{3}= \langle \{0,a,1\},\land,\lor,\rightarrow,0,1 \rangle$ equipped with a binary operation $\rightarrow$ according to the following table:
\begin{center}
        \begin{minipage}[c]{.40\textwidth}
\centering
{\tiny\xymatrix@C=0.3em{
&1\ar@{-}[d]&\\
&a\ar@{-}[d]&\\
&0&}}          
        \end{minipage}%
        \hspace{10mm}%
        \begin{minipage}[c]{.40\textwidth}
\centering
\begin{tabular}{l|lll}
$\rightarrow$ & 1 & a & 0 \\ \hline
1             & 1 & a & 0 \\
a             & 1 & 1 & 0 \\
0             & 0 & 0 & 1
\end{tabular}
\end{minipage}
\end{center}
Note that $\mathbf{KO}_{3}$ is a connexive Heyting algebra. Moreover, it satisfies neither the identity $(x\rightarrow y)\rightarrow(y\rightarrow x)\approx 1$ nor the quasi-identity $$x\rightarrow y \approx 1 \curvearrowright y\rightarrow x \approx 1. $$
Indeed, e.g. $(a \to 1) \to (1 \to a) = 1 \to a = a$.

\end{example}

We now list some elementary arithmetical properties of $\mathcal{CHA}$.

\begin{lemma}\label{lem:arit}
Let $\alga$ be a connexive Heyting algebra. The following hold, for any $a,b,c\in A$:
 \begin{multicols}{2}
\begin{enumerate}
 \item $(a\rightarrow b)\land(b\rightarrow c)\leq a\rightarrow c$;
 \item $(a\rightarrow b)\rightarrow((c\rightarrow a)\rightarrow(c\rightarrow b)) = 1$;
 \item if $a\leq b$ then $\lnot b\leq \lnot a$;
 \item $a \leq a\to 1\leq b\to(a\to b)$;
 \item if $\lnot a=1$, then $a=0$;
 \item $\lnot(a\to \lnot a)=1$:
\item$a\to \lnot a=0=\lnot a\to a$;
\item$(a\to b)\land (a\to\lnot b)=0$;
\item$(a\to b)\to(a\to \lnot b)=0$;
\item$(a\to 1)\to \neg a=0$;
\item$0\to a=(a\to 0)\to 1$;
\item $a \rightarrow \lnot \lnot a = 1 $;
\item$0\to a=a\to 0$;
\item$\neg b=\neg((b \to a) \to a)=\neg a\to~(b\to ~a)$;
\item$(a\to 1)\land \lnot a=0$;
\item$\lnot a=(a\to 1)\to0$;
\item$(a\to b)\to1=\neg(a\to\neg b)$;
\item$\neg\neg a=a\to1$;
\item$\neg(a\to b)=\neg(b\to a)$;
\item$\neg(a\to b )=a\to \neg b$;
\item$a\rightarrow b = 0$ if and only if $a\rightarrow\neg b=1$;
\item $a\rightarrow b = 1$ implies $a\rightarrow\neg b = 0$.
\end{enumerate}
 \end{multicols}
\end{lemma}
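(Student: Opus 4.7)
The statement is a long catalogue of arithmetical identities; the natural plan is to prove them in the listed order, so that each one can invoke its predecessors together with the axioms C1--C5 and Lemma~\ref{lem:connexalgarithm1} (fully available now that C6 and C7 have been established). The workhorses will be Lemma~\ref{lem:connexalgarithm1}.(2) to pass from $u\to v = 1$ to $u\leq v$; Lemma~\ref{lem:connexalgarithm1}.(4) and~(8) to convert between $u\leq \lnot v$ and $u\land v = 0$; C3 to collapse $x\land(x\to y)$ into $x\land y$; and C2 as the principal \emph{connexive engine} that manufactures terms equal to $1$.

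The first move is to settle a small core from which everything else cascades. Item~(1) is the transitivity observation already recorded in the preceding proof: from C1 one gets $a\to b\leq (b\to c)\to(a\to c)$ by Lemma~\ref{lem:connexalgarithm1}.(1),(2), and then C3 gives $(a\to b)\land(b\to c)\leq a\to c$. Item~(3) is antitonicity of $\lnot$, obtained from Lemma~\ref{lem:connexalgarithm1}.(4),(8). Item~(5) is immediate from Lemma~\ref{lem:connexalgarithm1}.(2). Item~(6) (Aristotle~1) follows by instantiating C2 at $y = x$ and using C7 together with $1\to u = u$; item~(7) combines (6) with~(5) for the first half, and a symmetric C2 instance (at $x = \lnot a$, $y = a$) together with the computation $\lnot 1 = 1\to 0 = 0$ for the second. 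Item~(2) is a prefixing-type companion to C1, derived from C1, C4, and C3 by a short manipulation.

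With the core in hand, items~(8)--(10) drop out quickly: C2 and Lemma~\ref{lem:connexalgarithm1}.(2) give $a\to b\leq \lnot(a\to \lnot b)$, whence~(8) by Lemma~\ref{lem:connexalgarithm1}.(4); for~(9), reapply C2 with $x = a\to b$, $y = a\to \lnot b$ and observe that its inner subterm $(a\to b)\to \lnot(a\to \lnot b)$ is already~$1$ by the original C2 instance, so its negation is $\lnot 1 = 0$, and~(5) finishes the job; (10) is (9) at $b = 1$. Item~(4) combines Lemma~\ref{lem:connexalgarithm1}.(5) with a monotonicity step, and items~(11)--(16) follow by similar combinations of C2, C4/C5, C3, and previously established items, with item~(13) (the striking collapse $0\to a = a\to 0$) deserving a little extra care.

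The main obstacle is the cluster (17)--(20), which encodes the deepest collapses: $\lnot\lnot a = a\to 1$, $\lnot(a\to b) = a\to \lnot b$, and the symmetry $\lnot(a\to b) = \lnot(b\to a)$. These are the algebraic face of the connexive laws and each is proved by two inequalities via carefully selected C2 instances: one that produces a term equal to~$1$, read off by Lemma~\ref{lem:connexalgarithm1}.(2), and one that produces a term equal to~$0$, read off via~(5). The trickiest is arguably the symmetry $\lnot(a\to b) = \lnot(b\to a)$, since C2 treats its two arguments asymmetrically while the conclusion is symmetric; I would handle it by first rewriting $\lnot(a\to b)$ using~(17) or~(18), and then permuting $a$ and~$b$ with the help of C1 and~(1). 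Once (17)--(20) are in place, items~(21) and~(22) are direct consequences of~(8),~(9), and~(5).
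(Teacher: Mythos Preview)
Your plan is sound and mirrors the paper's approach: prove the items in order using C1--C5 together with Lemma~\ref{lem:connexalgarithm1}, isolate (17)--(20) as the hard cluster, and establish each identity there via a pair of inequalities. Two small bookkeeping corrections: item~(2) is obtained in the paper by pure C1-manipulation (iterated suffixing and the instance at $c:=1$), not via C4; and the forward direction of~(21) genuinely requires~(20) (from $a\to b=0$ one gets $a\to\lnot b=\lnot(a\to b)=\lnot 0=1$), so you should cite~(20) there rather than only~(8),~(9),~(5).
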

\begin{proof}
(1) Clear.\\
(2) Note that Lemma \ref{lem:connexalgarithm1}.(1) and C1 entail that $$(((b\rightarrow d)\rightarrow(c\rightarrow d))\rightarrow a)\rightarrow((c\rightarrow b)\rightarrow a)=1.$$ Upon setting $c:= 1$, again by Lemma \ref{lem:connexalgarithm1}.(1) it follows that $(((b\rightarrow c)\rightarrow c)\rightarrow a)\rightarrow(b\rightarrow a)=1$. Therefore, one obtains
$$((((b\rightarrow d)\rightarrow(c\rightarrow d))\rightarrow(c\rightarrow d))\rightarrow((c\rightarrow b)\rightarrow(c\rightarrow d)))\rightarrow((b\rightarrow d)\rightarrow((c\rightarrow b)\rightarrow(c\rightarrow d))) = 1$$ and also $$((((b\rightarrow d)\rightarrow(c\rightarrow d))\rightarrow(c\rightarrow d))\rightarrow((c\rightarrow b)\rightarrow(c\rightarrow d)))=1.$$Hence, Lemma \ref{lem:connexalgarithm1}.(1) yields the desired conclusion.\\
 (3) If $a\leq b$ then $\lnot b\land a = a \land \lnot b \leq b \land \lnot b = 0$, whence, using Lemma \ref{lem:connexalgarithm1}.(4), $\lnot b \leq \lnot a$.\\
(4) The first inequality is Lemma \ref{lem:connexalgarithm1}.(5). For the second one, $a\to 1\leq (1\to b)\to (a\to b)=b\to(a\to b)$, by C1 and Lemma \ref{lem:connexalgarithm1}.(1)-(2).\\
(5) If $1=\lnot a=a\to 0$, then $a \leq 0$.\\
(6) By C2, C7 and Lemma \ref{lem:connexalgarithm1}.(1).\\
(7) The fact that $a\to \lnot a=0$ follows from items (5) and (6). Moreover, 
\[
\lnot a\to a\leq (\lnot a\to\lnot a)\to (\lnot a\to a)=1\to 0=0.
\]
(8) By C2 and Lemma \ref{lem:connexalgarithm1}.(4).\\
(9) By C2 and items (5) and (8).\\
(10) From (9), setting $b := 1$.\\
(11) By C1,
\[
0\to a\leq (a\to 0)\to (0\to 0) = (a\to 0)\to1.
\]
So, $0\to a\leq(a\to 0)\to 1$. Also, $(a\to 0)\to 1\leq (1\to a)\to (\neg a\to a)=a\to 0 \leq 0 \to a$, by C1, Lemma \ref{lem:connexalgarithm1}.(2)-(9) and item (7).\\
(12)
\[
a \to \lnot \lnot a = (1 \to a) \to ((1 \to (a \to 0)) \to 0) = 1.
\]
(13) By the proof of item (11).\\
(14)
\begin{align*}
\neg b\leq& (0\to a)\to (b\to a)& \text{C1, Lm. \ref{lem:connexalgarithm1}.(2)}\\
\leq& ((b\to a)\to a)\to ((0\to a)\to a)& \text{C1, Lm. \ref{lem:connexalgarithm1}.(2)}\\
=& ((b\to a)\to a)\to ((a\to 0)\to a)& \text{(13)}\\
=&\neg ((b\to a)\to a)& \text{(7)}\\
\leq& \lnot b.& \text{(3), Lm. \ref{lem:connexalgarithm1}.(6)}\\
\end{align*}
(15) $a\to 1\leq \lnot(a \to \lnot 1) = \lnot \lnot a$ by C2 and Lemma \ref{lem:connexalgarithm1}.(2), hence the claim follows from Lemma \ref{lem:connexalgarithm1}.(4).\\
(16) Since $a\leq a\to 1$ by Lemma \ref{lem:connexalgarithm1}.(5), $(a\to 1)\to 0\leq a\to 0$ by item (3). Also, from (15) and Lemma \ref{lem:connexalgarithm1}.(4) we obtain that $a\to0\leq(a\to 1)\to 0$.\\
(17) 
\begin{align*}
1=&(a\to b)\to\neg(a\to\neg b)& \text{C2}\\
\leq&(\neg(a\to\neg b)\to1)\to((a\to b)\to1).& \text{C1, Lm. \ref{lem:connexalgarithm1}.(2)}
\end{align*} 
Then, by Lemma \ref{lem:connexalgarithm1}.(2)-(5) and C1, $\lnot(a \to \lnot b) \leq \neg(a\to\neg b)\to1\leq(a\to b)\to1$. 
Also, 
\begin{align*}
((a\to b)\to1)\land(a\to\neg b)\leq&((1\to(a\to \lnot b))\to ((a\to b)\to (a\to\neg b)))\land (a\to\neg b)& \text{C1, Lm. \ref{lem:connexalgarithm1}.(2)}\\
=&\neg(1\to(a\to \lnot b))\land (a\to \neg b)&\text{(9)}\\
=&\neg(a\to \lnot b)\land (a\to\neg b)& \text{Lm. \ref{lem:connexalgarithm1}.(1)}\\
=&0,&\text{Lm. \ref{lem:connexalgarithm1}.(8)}
\end{align*}
i.e. $(a\to b)\to1\leq\neg(a\to\neg b)$.\\
(18) Set $a := 1$ and $b := a$ in (17).\\
(19) \begin{align*}
(a\to b)\land ((b\to \neg a)\to1)=&(a\to b)\land\neg(b\to \neg\neg a)&\text{(17)}\\
\leq&((b\to\neg\neg a)\to(a\to \neg\neg a))\land\neg(b\to \neg\neg a)& \text{C1, Lm. \ref{lem:connexalgarithm1}.(2)}\\
=&((b\to\neg\neg a)\to1)\land\neg(b\to \neg\neg a)&\text{(12)}\\
=&0.&\text{(15)}
\end{align*}
Therefore,
\begin{align*}
0=&(a\to b)\land((b\to\neg a)\to1)\\
=&(a\to b)\land\neg\neg(b\to\neg a)&\text{(18)}\\
=&(a\to b)\land\neg((b\to a)\to1)&\text{(17)}\\
=&(a\to b)\land\neg(b\to a),&\text{(18), Lm. \ref{lem:connexalgarithm1}.(10)}
\end{align*}
i.e. $\neg(a\to b)\geq\neg(b\to a)$.\\
(20) \begin{align*}
\lnot(a \to b)=&\lnot \lnot \lnot(a \to b)&\text{Lm. \ref{lem:connexalgarithm1}.(10)}\\
=& \lnot((a \to b) \to 1)&\text{(18)}\\
=& \lnot \lnot(a \to \lnot b)& \text{(17)}\\
=& \lnot \lnot(\lnot b \to a)& \text{(19)}\\
\leq& a \to ((\lnot b \to a) \to a) &\text{(4)-(18)}\\
=& a \to \lnot b &\text{(13)}
\end{align*}
The last line is justified as follows. $\lnot b$ can be replaced by $(\lnot b \to a) \to a$ since, by Lemma \ref{lem:connexalgarithm1}.(6)-(7) and (10), as well as item (14), $\lnot b \leq (\lnot b \to a) \to a \leq \lnot \lnot ((\lnot b \to a) \to a) = \lnot \lnot \lnot b = \lnot b$. 
\\
(21) If $a\rightarrow b=0$, then $\neg(a\rightarrow b)=a\rightarrow\neg b=1$ by (20). The converse holds as well by item (8).\\
(22) follows from items (20) and (21).
\end{proof}

By C2, C7 and Lemma \ref{lem:connexalgarithm1}.(1)-(2), the relation $\{ \langle a,b\rangle : a \rightarrow b = 1 \}$ is a partial ordering on any $\mathbf{A} \in \mathcal{CHA}$. By Example \ref{treelementi}, it is generally stronger than the ordering induced by the lattice operations. A more precise characterisation is contained in the following theorem. 

\begin{theorem}\label{thm:ndcd-rdr}
Let $\alga$ be a connexive Heyting algebra. The following hold, for any $a,b \in A$:
\[
a\to b=1 \text{ if and only if }a\leq b\text{ and }\lnot a=\lnot b.
\]
\end{theorem}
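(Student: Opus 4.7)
The plan is to prove both directions of the biconditional. For the forward direction, assume $a \to b = 1$: then $a \leq b$ follows from Lemma~\ref{lem:connexalgarithm1}.(2), and $\lnot b \leq \lnot a$ from Lemma~\ref{lem:arit}.(3). To obtain the reverse inclusion $\lnot a \leq \lnot b$, I would chain several items of Lemma~\ref{lem:arit}: (19) gives $\lnot(b \to a) = \lnot(a \to b) = 0$; (20) rewrites this as $b \to \lnot a = 0$; (21) then yields $b \to \lnot \lnot a = 1$, whence $b \leq \lnot \lnot a$ by Lemma~\ref{lem:connexalgarithm1}.(2); finally Lemma~\ref{lem:arit}.(3) together with triple negation (Lemma~\ref{lem:connexalgarithm1}.(10)) conclude $\lnot a = \lnot \lnot \lnot a \leq \lnot b$.

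For the backward direction, assume $a \leq b$ and $\lnot a = \lnot b$, and set $c := a \to b$. From $a \leq b$ and axiom C3, $a \land c = a \land b = a$, so $a \leq c$. Instantiating Lemma~\ref{lem:arit}.(14) with $a,b$ interchanged yields $\lnot a = \lnot b \to c$; substituting $\lnot a = \lnot b$ gives the equation $\lnot a = \lnot a \to c$, from which C3 forces $\lnot a \leq c$. From Lemma~\ref{lem:connexalgarithm1}.(7), $a \to \lnot a = 0$, hence $a \to \lnot b = 0$; axiom C2 then supplies $c \to 1 = \lnot(a \to \lnot b) = 1$, so $c$ is dense.

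Next I would derive $b \leq c$: axiom C4 with $x := a$, $y := 1$, $z := b$ (using $a \leq b$, so $b \land a = a$) gives $a \to 1 \leq c$, while Lemma~\ref{lem:connexalgarithm1}.(5) combined with Lemma~\ref{lem:arit}.(18) and $\lnot a = \lnot b$ supply $b \leq b \to 1 = \lnot\lnot b = \lnot\lnot a = a \to 1$; chaining, $b \leq c$. Thus $c$ is dense, satisfies $\lnot c = 0$, and dominates $a$, $b$, $\lnot a$, and $\lnot\lnot a$.

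The hard part will be the final step: concluding $c = 1$ from the facts just collected. The accumulated inequalities only give $c \geq a \lor b \lor \lnot a \lor \lnot\lnot a$, a join which in a general connexive Heyting algebra need not reach $1$---for instance, the Stone-type identity $\lnot a \lor \lnot\lnot a = 1$ can fail. I anticipate that this last step requires a more subtle combination of axioms C4 and C5 with the semi-Heyting axiom C6, exploited together with the derived equation $(a \to b) \to \lnot a = \lnot a$ (obtainable from Lemma~\ref{lem:arit}.(14) and (20)) and the density of $c$, in order to rule out every value of $c$ other than $1$.
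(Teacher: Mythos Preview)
Your forward direction is correct, though it takes a longer route than the paper's. The paper obtains $\lnot a \leq \lnot b$ in one stroke: from $a \to b = 1$ and Lemma~\ref{lem:arit}.(2)-(13) it gets $1 = a \to b \leq (0 \to a) \to (0 \to b) = \lnot a \to \lnot b$, whence $\lnot a \leq \lnot b$. Your chain through items (19), (20), (21) works too.

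The backward direction, however, has a genuine gap, and you are right to flag it. The facts you collect ($a \leq c$, $b \leq c$, $\lnot a \leq c$, $\lnot c = 0$) are simply not enough to force $c = 1$: already in the three-element algebra $\mathbf{KO}_3$ of Example~\ref{treelementi} the element $a$ satisfies $\lnot a = 0$ yet $a \neq 1$, so ``density plus large lower bounds'' cannot by itself do the job. There is no missing routine calculation at the end of your argument; the strategy of bounding $c$ from below and showing it dense is a dead end.

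The paper's argument for this direction is short and avoids the trap entirely. Rather than estimating $a \to b$ from below by accumulating elements, it manufactures the equality $a \to b = 1$ directly from an instance of C4. Start from $a \to \lnot\lnot a = 1$ (Lemma~\ref{lem:arit}.(12)); apply C4 with $z := b$ to obtain
\[
1 \leq (b \land a) \to (b \land \lnot\lnot a).
\]
Now use the hypotheses: $a \leq b$ gives $b \land a = a$, and $\lnot a = \lnot b$ gives $b \land \lnot\lnot a = b \land \lnot\lnot b = b$ (the last by Lemma~\ref{lem:connexalgarithm1}.(7)). So the right-hand side is exactly $a \to b$. The key idea you were missing is not a ``subtle combination'' of C4, C5, C6, but a \emph{single} application of C4 to an identity already equal to $1$.
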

\begin{proof}
By Lemmas \ref{lem:connexalgarithm1}.(2) and \ref{lem:arit}.(3), if $a\to b=1$, then $a\leq b$ and also $\lnot b \leq \lnot a$. Moreover, by Lemmas \ref{lem:connexalgarithm1}.(2) and \ref{lem:arit}.(2)-(13), $1 = a\to b \leq (0 \to a) \to (0 \to b) = (a \to 0) \to (b \to 0)$, hence $\lnot a \leq \lnot b$.

Conversely,
\begin{align*}
1=&a \to \llnot a &\text{Lm. \ref{lem:arit}.(12)} \\
\leq& (a\land b) \to (\llnot a\land b)&\text{C4}\\
=&a\to (\llnot b \land b)\\
=&a \to b &\text{Lm. \ref{lem:connexalgarithm1}.(7)}
\end{align*}
\end{proof}

It may be expedient to observe that axiom C1 in the definition of connexive Heyting algebras can be equivalently replaced by the only seemingly weaker condition $x \rightarrow y \leq (y \rightarrow z) \rightarrow (x \rightarrow z)$. Observe first that Theorem \ref{thm:ndcd-rdr} does not depend on the full version of C1, but only on the above-mentioned condition. Also, let $\mathbf{A}$ satisfy all the remaining axioms of $\mathcal{CHA}$, and let $a,b,c \in A$. We have that:
\begin{align*}
\lnot(a \to b) =&a \to \lnot b &\text{Lm. \ref{lem:arit}.(20)}\\
\leq & (c \to a) \to (c \to \lnot b) &\text{Lm. \ref{lem:arit}.(2)}\\
=& (c \to a) \to \lnot (c \to b) &\text{Lm. \ref{lem:arit}.(20)}\\
=& (c \to a) \to \lnot (b \to c) &\text{Lm. \ref{lem:arit}.(19)}\\
=& \lnot ((c \to a) \to (b \to c)) &\text{Lm. \ref{lem:arit}.(20)}\\
=& \lnot ((b \to c) \to (c \to a)) &\text{Lm. \ref{lem:arit}.(19)}\\
=& (b \to c) \to \lnot (c \to a) &\text{Lm. \ref{lem:arit}.(20)}\\
=& (b \to c) \to \lnot (a \to c) &\text{Lm. \ref{lem:arit}.(19)}\\
=& \lnot ((b \to c) \to (a \to c)). &\text{Lm. \ref{lem:arit}.(20)}
\end{align*}
(We notice that none of the results used above depend on the full version of C1 either.) Hence, by Theorem \ref{thm:ndcd-rdr}, $(a\to b)\to((b\to c)\to(a\to c))=1$. 

Also, a stronger version of Boethius' law holds in $\mathcal{CHA}$.

\begin{lemma}\label{lem:C2}
Let $\alga$ be a connexive Heyting algebra. The following hold, for any $a,b \in A$:
\[
(a\to b)\to((b\to c)\to\neg(a\to\neg c))=1.
\]
\end{lemma}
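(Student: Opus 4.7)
The plan is to chain together three facts we have already: Boethius in its axiomatic form \textbf{C2}, the monotonicity-in-second-argument identity of Lemma \ref{lem:arit}.(2), and the strengthened transitivity of implication established in the paragraph immediately preceding the lemma. The connective glue will be the fact that the strong relation ``$x \to y = 1$'' is transitive, which is an immediate consequence of Lemma \ref{lem:arit}.(1): if $x \to y = 1$ and $y \to z = 1$, then $(x \to y) \land (y \to z) = 1 \leq x \to z$, hence $x \to z = 1$; and the fact that if $\alpha = 1$ and $\alpha \to \beta = 1$, then $\beta = 1$, which follows from Lemma \ref{lem:connexalgarithm1}.(2).

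First, I would rewrite C2 applied to the pair $a,c$ as
\[
(a \to c) \to \lnot(a \to \lnot c) = 1.
\]
Then I would feed this into Lemma \ref{lem:arit}.(2), instantiating its variables by $\alpha := a \to c$, $\beta := \lnot(a \to \lnot c)$, $\gamma := b \to c$, to obtain
\[
\bigl((a \to c) \to \lnot(a \to \lnot c)\bigr) \to \bigl(((b \to c) \to (a \to c)) \to ((b \to c) \to \lnot(a \to \lnot c))\bigr) = 1.
\]
Since the antecedent equals $1$ by C2, detachment yields
\[
\bigl((b \to c) \to (a \to c)\bigr) \to \bigl((b \to c) \to \lnot(a \to \lnot c)\bigr) = 1. \tag{$\ast$}
\]

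Second, from the strengthened version of C1 established in the paragraph just above the lemma, we have
\[
(a \to b) \to \bigl((b \to c) \to (a \to c)\bigr) = 1. \tag{$\ast\ast$}
\]
Applying transitivity of the strong relation to $(\ast\ast)$ and $(\ast)$ gives
\[
(a \to b) \to \bigl((b \to c) \to \lnot(a \to \lnot c)\bigr) = 1,
\]
which is the desired identity.

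The only slightly delicate point — and the one I would verify carefully — is the detachment step leading to $(\ast)$, since it hinges on being able to strip off the antecedent whenever it already equals $1$; but this is immediate from $x \to y = 1 \Rightarrow x \leq y$ (Lemma \ref{lem:connexalgarithm1}.(2)) together with the fact that $1 \leq y$ forces $y = 1$. Other than that, the proof is an entirely linear chain of rewrites and requires no case analysis or new algebraic identity.
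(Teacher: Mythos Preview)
Your proof is correct and shares its first half with the paper's: both obtain $(\ast)$ by instantiating Lemma~\ref{lem:arit}.(2) and detaching the C2 antecedent. The difference lies in the final step. The paper combines $(\ast)$ with C1 via Theorem~\ref{thm:ndcd-rdr}: from $(a\to b)\leq (b\to c)\to(a\to c)$ and $(\ast)$ it extracts both the $\leq$-inequality and the equality of negations needed to invoke that characterisation. Your route is more direct: you use C1 in its full axiomatic form $(a\to b)\to((b\to c)\to(a\to c))=1$ and simply chain it with $(\ast)$ by the transitivity of the relation $\{(x,y):x\to y=1\}$, which you correctly derive from Lemma~\ref{lem:arit}.(1). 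This is a genuine simplification --- it bypasses Theorem~\ref{thm:ndcd-rdr} entirely. One cosmetic point: what you call ``the strengthened version of C1 established in the paragraph just above the lemma'' is just C1 itself; that paragraph shows the converse, namely that the full C1 follows from the weaker inequality form. So you should cite C1 directly for $(\ast\ast)$.
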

\begin{proof}
By C2, Lemma \ref{lem:connexalgarithm1}.(1) and Lemma \ref{lem:arit}.(2), we have that
\begin{align*}
1 =& ((a\to c)\to\neg(a\to\neg c))\to(((b\to c)\to(a\to c))\to((b\to c)\to\neg(a\to\neg c)))\\
=& 1\to(((b\to c)\to(a\to c))\to((b\to c)\to\neg(a\to\neg c)))\\
=& ((b\to c)\to(a\to c))\to((b\to c)\to\neg(a\to\neg c)).
\end{align*}
Since $(a\to b)\leq(b\to c)\to(a\to c)$, by C1 and Theorem \ref{thm:ndcd-rdr}, we have on the one hand that $a \to b \leq (b\to c)\to\neg(a\to\neg c)$, and on the other that $\neg(a\to b)=\neg((b\to c)\to(a\to c))=\neg((b\to c)\to\neg(a\to\neg c))$, and therefore $(a\to b)\to((b\to c)\to\neg(a\to\neg c))=1$, again by Theorem \ref{thm:ndcd-rdr}.
\end{proof}

\subsection{Structure theory}

After surveying some of the most elementary arithmetical properties of $\mathcal{CHA}$, we now delve into its structure theory, with an eye to establishing some crucial underpinnings of the term equivalence result that follows. For a start, we observe that $\mathcal{CHA}$ is an ideal determined variety.

\begin{lemma}\label{stamazza}
$\mathcal{CHA}$ is a $1$-ideal determined variety.
\end{lemma}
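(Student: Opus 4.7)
The plan is to verify separately that $\mathcal{CHA}$ is $1$-subtractive and $1$-regular, each via the Maltsev-style characterisations already available to us (Theorems \ref{manzoni} and \ref{lunare}), so that $1$-ideal determinacy follows by definition. Both witnesses are built straight from the connexive conditional.

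For $1$-subtractivity, I would apply Theorem \ref{manzoni} with the binary term $\varphi(x,y) := x \rightarrow y$. The identity $\varphi(1,x) \approx x$ is exactly Lemma \ref{lem:connexalgarithm1}.(1), and $\varphi(x,x) \approx 1$ is axiom C7. So Theorem \ref{manzoni} delivers $1$-subtractivity with no further work.

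For $1$-regularity, I would invoke Theorem \ref{lunare} with the two-formula system $\varphi_1(x,y) := x \rightarrow y$, $\varphi_2(x,y) := y \rightarrow x$. One direction of $x \approx y \dashv\vdash_{\mathcal{CHA}} x \rightarrow y \approx 1,\ y \rightarrow x \approx 1$ is immediate from C7. The other direction is precisely what Theorem \ref{thm:ndcd-rdr} buys us: if $x \rightarrow y = 1$ and $y \rightarrow x = 1$ in some $\mathbf{A} \in \mathcal{CHA}$, then $x \leq y$ and $y \leq x$ by the theorem (we do not even need the $\lnot x = \lnot y$ clause in either direction), whence $x = y$ by antisymmetry of the lattice order.

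Combining the two properties gives $1$-ideal determinacy. I do not foresee any real obstacle here: the main conceptual point was already isolated in Theorem \ref{thm:ndcd-rdr} (the partial order $a \rightarrow b = 1$ refines, but is still controlled by, the lattice order), so both Maltsev witnesses reduce to a one-line appeal. The only small subtlety worth flagging in the write-up is that the backward implication for regularity is \emph{not} merely Lemma \ref{lem:connexalgarithm1}.(2) (which would give only $a \leq b$ and $b \leq a$ separately); we really do need the symmetric pair $\{x \rightarrow y, y \rightarrow x\}$ to deduce $x = y$, and it is Theorem \ref{thm:ndcd-rdr} that legitimises the two half-steps in a single coherent argument.
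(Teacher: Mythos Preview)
Your proof is correct and follows essentially the same route as the paper: the same witness $x\rightarrow y$ for $1$-subtractivity via Theorem~\ref{manzoni}, and the same pair $\{x\rightarrow y,\ y\rightarrow x\}$ for $1$-regularity via Theorem~\ref{lunare}. One small correction to your closing remark: Lemma~\ref{lem:connexalgarithm1}.(2) \emph{is} already enough for the backward direction of regularity---applying it once to each formula gives $a\leq b$ and $b\leq a$, hence $a=b$ by antisymmetry---and this is exactly what the paper does; invoking Theorem~\ref{thm:ndcd-rdr} works too but is unnecessary, so your caveat that Lemma~\ref{lem:connexalgarithm1}.(2) is insufficient is misplaced.
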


\begin{proof}
By Theorem \ref{manzoni}, C7 and Lemma \ref{lem:connexalgarithm1}.(1), the formula $x \to y$ witnesses $1$-subtractivity for $\mathcal{CHA}$. By Theorem \ref{lunare}, C7 again and Lemma \ref{lem:connexalgarithm1}.(2), the set $\{ x \to y, y \to x\}$ witnesses $1$-regularity for $\mathcal{CHA}$.
\end{proof}

In light of Theorem \ref{terreno}, in any connexive Heyting algebra $\mathbf{A}$ we have a lattice isomorphism between the lattices of congruences of $\mathbf{A}$, of $\mathcal{CHA}$-ideals of $\mathbf{A}$, and of congruence classes of $1$ of some congruence on $\mathbf{A}$. If $\theta$ is such a congruence, its congruence class of $1$ is customarily denoted by $1/\theta$. In what follows, we show that such congruence classes of $1$ are nothing but the \emph{lattice filters} of $\mathbf{A}$. This result is known to hold, more generally, for the variety of semi-Heyting algebras. The proof we reproduce here simplifies to some extent the one in \cite[Thm. 5.4]{Sanka}, thanks to the additional axioms of $\mathcal{CHA}$.

Some notational and terminological explanations are now in order. Let $\alga \in \mathcal{CHA}$. We denote by $\mathrm{Fi}(\alga)$ the lattice of lattice filters of $\alga$, as well as its universe. If $C\subseteq A$,  $\mathrm{Fg}(C)$ will denote the lattice filter generated by $C$, i.e. the smallest filter of $\alga$ containing $C$. It is well known that, for any $C\subseteq A$, one has
\[\mathrm{Fg}(C)=\{y\in A:y\geq x_{1}\land\dots\land x_{n},\, x_{1},\dots,x_{n}\in C,\, n\geq 1\}.\]
Given $\alga \in \mathcal{CHA}$ and $F\subseteq A$, we also set $$\Theta(F):=\{ \langle x,y \rangle \in A^{2}: x\rightarrow y,y\rightarrow x\in F\}.$$ 
\begin{lemma}\label{lem: filterscongr}
Let $\alga$ be a connexive Heyting algebra. The following hold:
\begin{enumerate}
\item  For any $\theta\in\mathrm{Con}(\alga)$, $1/\theta \in \mathrm{Fi}(\alga)$.
\item For any $F \in \mathrm{Fi}(\alga)$, $\Theta(F)\in\mathrm{Con}(\alga)$. 
\item $1/\Theta(F)=F$ and $\theta = \Theta(1/\theta)$.   
\end{enumerate}
\end{lemma}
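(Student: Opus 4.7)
The plan is to prove the three parts in order, exploiting the arithmetic collected in Lemmas \ref{lem:connexalgarithm1} and \ref{lem:arit} together with axioms C1--C7, and the fact that a lattice filter of $\mathbf{A}$ is a nonempty subset which is closed under binary meets and upward closed under $\leq$.

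For (1), I would verify all three filter conditions for $1/\theta$. Nonemptiness is trivial ($1\in 1/\theta$). Closure under meets is immediate from $\theta$ being a lattice congruence: if $a\,\theta\,1$ and $b\,\theta\,1$, then $a\wedge b\,\theta\,1\wedge 1=1$. For upward closure, if $a\in 1/\theta$ and $a\leq b$, then $b=a\vee b\,\theta\,1\vee b=1$.

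For (2), I would first show that $\Theta(F)$ is an equivalence relation. Reflexivity uses C7, since $x\to x=1\in F$; symmetry is built into the definition; transitivity follows from Lemma \ref{lem:arit}.(1), which yields $(a\to b)\wedge(b\to c)\leq a\to c$, so that $F$ being a filter forces $a\to c\in F$ (and symmetrically $c\to a\in F$). Next I would check compatibility with each basic operation. For $\wedge$ and $\vee$, C4 and C5 give that $x\to y\in F$ implies $(z\wedge x)\to(z\wedge y)\in F$ and $(z\vee x)\to(z\vee y)\in F$; this covers congruence in one argument at a time, and the two-argument version follows via the transitivity of $\Theta(F)$ already established. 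The delicate case is compatibility with $\to$: given $a\,\Theta(F)\,b$ and $c\,\Theta(F)\,d$, I would first apply C1 to obtain $b\to a\leq (a\to c)\to(b\to c)$ and the symmetric inequality, yielding $(a\to c)\,\Theta(F)\,(b\to c)$; then apply Lemma \ref{lem:arit}.(2) to get $(b\to c)\,\Theta(F)\,(b\to d)$; and finally conclude by transitivity that $(a\to c)\,\Theta(F)\,(b\to d)$.

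For (3), the identity $1/\Theta(F)=F$ is a direct calculation: $a\in 1/\Theta(F)$ iff $a\to 1,\,1\to a\in F$, i.e.\ iff $a\to 1,\,a\in F$ by Lemma \ref{lem:connexalgarithm1}.(1), and this is equivalent to $a\in F$ because $a\leq a\to 1$ (Lemma \ref{lem:connexalgarithm1}.(5)) and $F$ is upward closed. For $\theta=\Theta(1/\theta)$, the forward direction uses C7: if $a\,\theta\,b$, then $(a\to b)\,\theta\,(b\to b)=1$, and similarly on the other side, so $\langle a,b\rangle\in\Theta(1/\theta)$. The reverse direction uses C3: if $(a\to b)\,\theta\,1$, then $a\wedge b=a\wedge(a\to b)\,\theta\,a\wedge 1=a$; symmetrically $a\wedge b\,\theta\,b$, whence $a\,\theta\,b$.

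The chief obstacle is the compatibility of $\Theta(F)$ with $\to$, since $\mathcal{CHA}$ is \emph{not} residuated in the usual Heyting sense; so rather than invoking a single residuation-style principle, I must piece together two one-sided monotonicity results (C1 on the antecedent side, Lemma \ref{lem:arit}.(2) on the consequent side) and glue them via the previously established transitivity of $\Theta(F)$. Once this is dispensed with, parts (1) and (3) are essentially bookkeeping using the arithmetic lemmas already in hand.
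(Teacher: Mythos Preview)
Your proof is correct and follows essentially the same approach as the paper: part (1) is routine, part (2) uses C7, Lemma~\ref{lem:arit}.(1), C4, C5, C1 and Lemma~\ref{lem:arit}.(2) exactly as you indicate, and part (3) matches the paper's argument almost verbatim. The only minor variation is in the reverse inclusion $\Theta(1/\theta)\subseteq\theta$: the paper passes to the quotient $\mathbf{A}/\theta$ and invokes Lemma~\ref{lem:connexalgarithm1}.(2) (antisymmetry of the order), whereas you argue directly from C3 via $a\wedge b=a\wedge(a\to b)\,\theta\,a\wedge 1=a$---both are equally straightforward.
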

\begin{proof}
$(1)$ is straightforward. Concerning $(2)$, let $F$ be a lattice filter over $\alga$. Reflexivity and transitivity of  $\theta(F)$ follow by C7, Lemma \ref{lem:arit}.(1) and Lemma \ref{lem:connexalgarithm1}.(1). Symmetry holds trivially. Now, if $a\rightarrow b\in F$, then $a\land c\rightarrow b\land c, a\lor c\rightarrow b\lor c, (c\rightarrow a)\rightarrow(c\rightarrow b),(b\rightarrow c)\rightarrow (a\rightarrow c)\in F$ follow by applying C1, C4, C5 and Lemma \ref{lem:arit}.(2). Therefore $\Theta({F})$ is compatible with the operations and so it is a congruence. For (3), if $a\in F$, then one has $1\rightarrow a,a\rightarrow 1\in F$, by Lemma \ref{lem:connexalgarithm1}.(1) and Lemma \ref{lem:arit}.(4). Hence $a\in 1/\Theta(F)$. Conversely, $a\in[1]_{\Theta(F)}$ entails that $1\rightarrow a = a\in F$. Lastly, by C7, if $a \theta b$ then $a \to b \theta b \to b = 1 = a \to a \theta b \to a$, while if $a \to b \theta 1 \theta b \to a$, then in $\alga / \theta$ we have that $a \to b = 1 = b \to a$, and thus $a=b$, whence in $\alga$ we obtain $a \theta b$.
\end{proof}

Next, we show that $\mathcal{CHA}$ has a QD term, which, as we have observed in Subsection \ref{uaaal}, is a bountiful property in terms of implying many other desirable features for a variety. 

\begin{theorem}\label{genoveffa}
$\mathcal{CHA}$ has a QD term.
\end{theorem}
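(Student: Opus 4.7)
The plan is to exhibit an explicit term $\varphi(x,y,z,w)$ of language $\mathcal{L}_{CH}$ witnessing the two QD conditions in Definition \ref{EDPCandstuff}. Set $e := (x \to y) \land (y \to x)$. By Theorem \ref{thm:ndcd-rdr} together with C7 and Lemma \ref{lem:connexalgarithm1}(2), the identity $e(a,b) = 1$ captures precisely the equality $a = b$; by Lemma \ref{lem: filterscongr} together with Lemma \ref{stamazza}, the principal congruence $\theta^{\alga}(a,b)$ corresponds to the lattice filter $\mathrm{Fg}(\{a \to b, b \to a\})$, which by Lemma \ref{lem:arit}(4) and C3 coincides with the principal filter $\uparrow e(a,b)$. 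Hence $\langle c,d\rangle \in \theta^{\alga}(a,b)$ amounts to the two lattice inequalities $e(a,b) \leq c \to d$ and $e(a,b) \leq d \to c$, from which, upon meeting with $c$ (resp.\ $d$) and applying C3, we extract $e(a,b) \land c \leq d$ and $e(a,b) \land d \leq c$.

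The naive Heyting-style candidate $((x \leftrightarrow y) \to z) \land ((x \leftrightarrow y) \lor w)$ does not transfer directly, because the connexive implication lacks full residuation (for instance, $0 \to a = \neg a$ by Lemma \ref{lem:arit}(13), which is in general strictly below $1$). Accordingly, I propose the modified candidate
\[
\varphi(x,y,z,w) := \bigl((e \to z) \lor \neg e\bigr) \land (e \lor w),
\]
in which the disjunct $\neg e$ corrects for the degenerate case $e = 0$ (then $\neg e = 1$ by Lemma \ref{lem:connexalgarithm1}(1), so the first conjunct collapses to $1$ and $\varphi = w$). For Case 1 ($a = b$) the axiom C7 yields $e = 1$, hence $\neg e = 0$, and $\varphi = (z \lor 0) \land 1 = z$ by Lemma \ref{lem:connexalgarithm1}(1). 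For Case 2, distributivity of the underlying lattice together with C3 and Lemma \ref{lem:connexalgarithm1}(8) expands $\varphi$ into $(e \land c) \lor ((e \to c) \land d) \lor (\neg e \land d)$; the inequality $\varphi \leq d$ is then immediate from $e \land c \leq d$, and the remaining task is to show $\varphi \geq d$, i.e.\ $d \leq (e \to c) \lor \neg e$.

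The main obstacle will be precisely this last bound, a weak residuation principle for $\to$. I intend to derive it by combining C4 and C5 with the identities of Lemma \ref{lem:arit}, notably items (12), (15), (18), and (20), exploiting the interplay between $e \land d \leq c$ (already in hand) and C3 to route $d$ through $(e \to c) \lor \neg e$; an auxiliary observation that the derived operation $x \to_H y := (x \to y) \lor \neg x$ behaves residually on the lattice reduct---and which, if isolated as a lemma, clearly anticipates the term equivalence with Heyting algebras proved in the next subsection---should streamline the argument. Should the simple candidate above fall short in some pathological $\alga \in \mathcal{CHA}$, the term obtained by replacing $e \to z$ with $e \to_H z$ in the formula for $\varphi$ is expected to work uniformly, by essentially the same argument used to verify the QD term for Heyting algebras.
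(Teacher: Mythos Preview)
Your proposed term is not a QD term. Take any $\mathbf{A}\in\mathcal{CHA}$, any $a\neq b$ in $A$, and set $c=d=1$. Then $\langle 1,1\rangle\in\theta^{\mathbf{A}}(a,b)$ trivially, so a QD term must return $1$. However, with $e=a\leftrightarrow b$ your formula gives
\[
\varphi(a,b,1,1)=\bigl((e\to 1)\lor\neg e\bigr)\land(e\lor 1)=\neg\neg e\lor\neg e,
\]
using Lemma~\ref{lem:arit}(18). This equals $1$ only under weak excluded middle, which fails in $\mathcal{CHA}$: for instance, in the connexive Heyting algebra arising (via Theorem~\ref{thm: heyting->connexive}) from the open sets of $\mathbb{R}$, take $a=0$ and $b=(0,1)$; then $e=\neg b$ and $\neg\neg e\lor\neg e=\neg b\lor\neg\neg b=\mathbb{R}\setminus\{0,1\}\neq\mathbb{R}$. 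The root of the problem is your auxiliary claim that $x\to_H y:=(x\to y)\lor\neg x$ ``behaves residually''. It does not: already the instance $y=1$ gives $x\to_H 1=\neg\neg x\lor\neg x$, which is not the top element in general, whereas any residual to $\land$ must send $(x,1)$ to $1$. (The genuine Heyting arrow derived later in the paper is $x\to(x\land y)$, not $(x\to y)\lor\neg x$.) Your fallback of replacing $e\to z$ by $e\to_H z$ is moreover vacuous, since $(e\to_H z)\lor\neg e=(e\to z)\lor\neg e$.

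For comparison, the paper avoids any appeal to residuation and builds the QD term purely from the Maltsev-style term $\psi(x,y,z)=((z\to y)\to x)\land((x\to y)\to z)$, precomposed with $\chi(x,y,z)=(x\leftrightarrow y)\land z$. The verification then reduces to the single lattice identity $e\land c=e\land d$ (which you correctly derived from $\langle c,d\rangle\in\theta(a,b)$) together with C7 and Lemma~\ref{lem:connexalgarithm1}(6); no inequality of the form $d\leq(e\to c)\lor\neg e$ is ever needed.
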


\begin{proof}
Recall that, in light of Definition \ref{EDPCandstuff}, we have to find an $\mathcal{L}_{CH}$-formula $\varphi $ in the variables $x,y,z,w$ such
that for any $\mathbf{A}\in \mathcal{CHA}$ and any $a,b,c,d\in A$,%
\begin{equation*}
\varphi ^{\mathbf{A}}\left( a,b,c,d\right) =\left\{ 
\begin{array}{l}
c\text{ if }a=b\text{;} \\ 
d\text{ if }\left\langle c,d\right\rangle \in \theta ^{\mathbf{A}}\left(
a,b\right).%
\end{array}%
\right.
\end{equation*}
Let $\psi(x,y,z) = ((z\rightarrow y)\rightarrow x)\land ((x\rightarrow y)\rightarrow z)$ and $\chi(x,y,z):= (x\leftrightarrow y)\land z$, where $x\leftrightarrow y:=(x\rightarrow y)\land(y\rightarrow x)$. We set $\varphi(x,y,z,w):=\psi(\chi(x,y,z),\chi(x,y,w),w)$. Explicitly: 
\[\varphi(x,y,z,w) = ((w\rightarrow((x\leftrightarrow y)\land w))\rightarrow((x\leftrightarrow y)\land z))\land((((x\leftrightarrow y)\land z)\rightarrow((x\leftrightarrow y)\land w))\rightarrow w).\]
A straightforward computation, involving C7 and Lemma \ref{lem:connexalgarithm1}.(6), shows that if $a=b$ then $\varphi ^{\mathbf{A}}\left( a,b,c,d\right) = c$. If $\langle c,d \rangle \in \theta^{\mathbf{A}}(a,b)$, then by Lemma \ref{lem: filterscongr} we have that $c \leftrightarrow d \in \mathrm{Fg}(a \leftrightarrow b)$, i.e., $a \leftrightarrow b \leq c \leftrightarrow d$. Thus $(a \leftrightarrow b) \land c \leq (a \leftrightarrow b)\land c \land (c \leftrightarrow d) = (a \leftrightarrow b)\land c \land d \land (d \to c) \leq (a \leftrightarrow b) \land d$. Similarly $(a \leftrightarrow b) \land d \leq (a \leftrightarrow b) \land c$, and hence $(a \leftrightarrow b) \land c = (a \leftrightarrow b) \land d$. It readily follows, using C7 and Lemma \ref{lem:connexalgarithm1}.(6), that $\varphi ^{\mathbf{A}}\left( a,b,c,d\right) = d$.
\end{proof}

The following corollary to the foregoing theorem had already been established by Sankappanavar for semi-Heyting algebras \cite[Cor. 5.7, Thm. 5.8]{Sanka}.

\begin{corollary}
$\mathcal{CHA}$ has EDPC and is congruence permutable.
\end{corollary}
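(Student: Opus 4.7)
The proof is essentially a direct invocation of previously established results. The plan is to simply chain Theorem \ref{genoveffa} (which establishes that $\mathcal{CHA}$ possesses a QD term) with Theorem \ref{marziano}.(1) (which asserts that any variety with a QD term is congruence permutable and has EDPC). No further computation is needed; the corollary is a pure specialisation.

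Concretely, I would write one line observing that, by Theorem \ref{genoveffa}, the quaternary term $\varphi(x,y,z,w)$ constructed there is a QD term for $\mathcal{CHA}$, and then apply Theorem \ref{marziano}.(1) to conclude that $\mathcal{CHA}$ is congruence permutable and has equationally definable principal congruences. There is no genuine obstacle here, since both halves of the statement are consequences of the QD term machinery already developed in Subsection \ref{uaaal}. The only thing worth flagging explicitly is \emph{which} clause of Theorem \ref{marziano} is being invoked, so as to make the proof self-contained at the level of cross-references.

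\begin{proof}
Immediate from Theorem \ref{genoveffa} and Theorem \ref{marziano}.(1).
\end{proof}
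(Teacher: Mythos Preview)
Your proof is correct and follows essentially the same approach as the paper: both derive the corollary immediately from Theorem \ref{genoveffa} together with Theorem \ref{marziano}.(1). The only addition in the paper's version is an explicit identification of the Maltsev term $\psi(x,y,z)$ witnessing congruence permutability and of the single identity witnessing EDPC, but this is commentary rather than additional argument.
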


\begin{proof}
By Theorem \ref{marziano}. Observe that the formula that witnesses congruence permutability is none other than $\psi(x,y,z)$ in the previous theorem, while the single identity that witnesses EDPC is (still retaining the conventions from the previous theorem) $m(x,y,z) \approx m(x,y,u)$. 
\end{proof}

By Theorem \ref{marziano}.(2), the join semilattice of compact congruences of any $\alga \in \mathcal{V}$ is dually relatively pseudocomplemented. However, we can say much more in the present case. On the one hand, by Lemma \ref{stamazza} and Lemma \ref{lem: filterscongr}, congruences on $\alga$ bijectively correspond to lattice filters of $\alga$. On the other hand, by Theorem \ref{marziano}.(3) and Lemma \ref{stamazza} again, the join semilattice of principal lattice filters of $\alga$ must be dually relatively pseudocomplemented as well. By the remarks following Theorem \ref{marziano}, we can actually \emph{compute} such dual relative pseudocomplements by first determining the behaviour of the former on principal congruences of the form $\theta(x,1)$, and then cashing out the behaviour of latter on their $1$-classes, i.e., on principal filters. Thus, we have, using Lemma \ref{lem:connexalgarithm1}.(7) and Lemma \ref{lem:arit}.(18)-(19):
\begin{align*}
\theta (a,1) \ast \theta (b,1) =& \theta (\varphi^{\mathbf{A}} (a,1,b,1),1) \\
=& \theta ((a \to (a \land b)) \land (((a \land b) \to a) \to 1),1)\\
=& \theta ((a \to (a \land b)) \land \lnot \lnot ((a \land b) \to a),1)\\
=& \theta ((a \to (a \land b)) \land \lnot \lnot (a \to (a \land b)),1)\\
=& \theta (a \to (a \land b),1).
\end{align*}
Hence, the principal filter $Fg(a) \ast Fg(b)$ is generated by $a \to (a \land b)$. Since there is a \emph{dual} order isomorphism between the poset reduct of $\alga$ and the poset of lattice filters of $\alga$, one is somehow led to surmise that the element $a \to (a \land b)$ must have some features that make it akin to a relative pseudocomplement, i.e., to a Heyting implication. This was the main insight that made us conjecture, and then prove, the results in the next subsection, even before we got acquainted with Sankappanavar's results on semi-Heyting algebras. 

\subsection{Term equivalence with Heyting algebras} 

Our next goal is to show that any connexive Heyting algebra has a term reduct that is a Heyting algebra. This is a property that holds, more generally, for all semi-Heyting algebras \cite[Lm. 4.1]{DCV}. However, capitalising on the stronger structure results obtained so far for $\mathcal{CHA}$, we can give an essentially different proof of the same theorem. Hereafter, whenever $\alga\in\mathcal{CHA}$ and $a,b \in A$, we let $a \Rightarrow b := a \rightarrow (a \land b)$. 

\begin{theorem}\label{thm: connexiveheyting}
Let $\alga = \langle A,\land,\lor,\rightarrow,0,1 \rangle \in\mathcal{CHA}$. Then the algebra $\mathbb{H}(\alga)= \langle A,\land,\lor,\Rightarrow,0,1 \rangle$ is a Heyting algebra.
\end{theorem}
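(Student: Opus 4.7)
The plan is to establish the Heyting residuation
\[
a \land b \leq c \ \Longleftrightarrow\ b \leq a \Rightarrow c
\]
on the bounded distributive lattice reduct of $\alga$, which is already in place by Definition \ref{putacaso}. This single adjunction is the only thing missing to certify $\mathbb{H}(\alga)$ as a Heyting algebra. Rather than reasoning with the connexive implication axiom-by-axiom, the idea is to transport the dual relative pseudocomplementation that already lives on the principal congruence semilattice of $\alga$ down to an adjunction at the element level.

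The structural bridges are all in place. By Theorem \ref{genoveffa}, $\mathcal{CHA}$ admits a QD term, whence Theorem \ref{marziano} yields that the compact congruence semilattice of $\alga$ is dually relatively pseudocomplemented and, in view of the $1$-regularity of $\mathcal{CHA}$ (Lemma \ref{stamazza}), that every compact congruence is principal. By Lemma \ref{stamazza} and Lemma \ref{lem: filterscongr}, the map $\theta \mapsto 1/\theta$ is a lattice isomorphism $\mathrm{Con}(\alga) \cong \mathrm{Fi}(\alga)$ sending the principal congruence $\theta(a,1)$ to the principal filter $F_a = \mathord{\uparrow} a$. Since $a \mapsto F_a$ is an order-reversing bijection between $A$ and the principal filters satisfying $F_a \lor F_b = F_{a \land b}$, composing yields an order-reversing bijection $a \mapsto \theta(a,1)$ between $A$ and the principal congruences, with $\theta(a,1) \lor \theta(b,1) = \theta(a \land b, 1)$.

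Applying the defining equivalence of the dual relative pseudocomplement,
\[
\theta(c,1) \subseteq \theta(a,1) \lor \theta(b,1) \ \Longleftrightarrow\ \theta(a,1) \ast \theta(c,1) \subseteq \theta(b,1),
\]
I then translate both sides via this order-reversing bijection. The left-hand side becomes $a \land b \leq c$. The right-hand side, using the explicit formula $\theta(a,1) \ast \theta(c,1) = \theta(a \Rightarrow c, 1)$ computed just before the theorem statement, becomes $b \leq a \Rightarrow c$. This is exactly the required residuation, so $\mathbb{H}(\alga)$ is a Heyting algebra.

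The real labour has already been invested in Theorem \ref{genoveffa} and in the computation of $\theta(a,1) \ast \theta(b,1) = \theta(a \Rightarrow b, 1)$. The only delicate point I expect is to keep track of the order-reversal built into the passage from elements to principal filters (and hence to principal congruences): meets in $\alga$ become joins in $\mathrm{Con}(\alga)$, and subset inclusion of principal filters reflects the reverse of the original element order. Once this bookkeeping is under control, no further obstacle remains.
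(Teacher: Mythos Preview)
Your proposal is correct and follows essentially the same route as the paper's own proof: both establish the residuation $a\land b\leq c \Leftrightarrow b\leq a\Rightarrow c$ by transporting the dual relative pseudocomplement $\ast$ on compact (i.e.\ principal) congruences down to $A$ via the order-reversing bijection with principal $1$-classes, invoking the computation $\theta(a,1)\ast\theta(c,1)=\theta(a\Rightarrow c,1)$ carried out just before the theorem. The only cosmetic difference is that the paper phrases the argument in terms of principal filters $\mathrm{Fg}(a)$ rather than principal congruences $\theta(a,1)$, but by Lemma~\ref{lem: filterscongr} these are isomorphic as ordered structures, so the two presentations are interchangeable.
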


\begin{proof}
Let $\alga\in\mathcal{CHA}$. Since $\alga$ is a bounded distributive lattice, all we need to show is that for any $a,b,c \in A$, $a \land b \leq c$ iff $a \leq b \Rightarrow c$. The following chain of equivalences holds by Theorem \ref{marziano}, Lemma \ref{lem: filterscongr} and Theorem \ref{genoveffa}:
\begin{align*}
a\land b\leq c\quad\Longleftrightarrow& Fg(c) \subseteq Fg(a \land b)\\
\Longleftrightarrow&Fg(c) \subseteq Fg(a) \lor Fg(b)\\
\Longleftrightarrow&Fg(b) \ast Fg(c) \subseteq Fg(a)\\
\Longleftrightarrow&Fg(b \Rightarrow c) \subseteq Fg(a)\\
\Longleftrightarrow&a \leq b \Rightarrow c.
\end{align*}

\end{proof}

It is also true that any Heyting algebra has a connexive Heyting algebra term reduct. Hereafter, we denote by $\mathcal{HA}$ the variety of Heyting algebras. We follow the convention that the Heyting arrow $\Rightarrow$ binds less strongly than the lattice operations. Whenever $\mathbf{H} \in\mathcal{HA}$ and $a,b \in H$, we let $a \rightarrow b := (a \Rightarrow b) \land (\lnot a \Rightarrow \lnot b)$. 
\begin{theorem}\label{thm: heyting->connexive} Let $\mathbf{H}= \langle H,\land,\lor,\Rightarrow,0,1 \rangle \in \mathcal{HA}$. Then the algebra $\mathbb{C}(\mathbf{H})=\langle H,\land,\lor,\rightarrow,0,1 \rangle$ is a connexive Heyting algebra.
\end{theorem}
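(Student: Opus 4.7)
The plan is to verify the identities C1--C5 for $\mathbb{C}(\mathbf{H})$. Since the lattice reduct is inherited from $\mathbf{H}$, the work lies in the axioms involving $\to$. The very first observation is that $\lnot a = a \to 0 = (a \Rightarrow 0) \land (\lnot_H a \Rightarrow \lnot_H 0) = \lnot_H a$, so the connexive negation collapses to the Heyting negation. Combined with the antitonicity of $\lnot_H$, this immediately yields the criterion $(\dagger)$: $u \to v = 1$ iff $u \leq v$ and $\lnot u = \lnot v$, i.e., the $\mathbb{C}(\mathbf{H})$-analogue of Theorem \ref{thm:ndcd-rdr}.

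Next I would dispatch the identities C3, C4, C5 by short Heyting-algebra manipulations, checking the two conjuncts of $\to$ separately. The main ingredients are the Heyting identity $x \land (x \Rightarrow y) = x \land y$, the formulas $\lnot(z \land w) = z \Rightarrow \lnot w$ and $\lnot(z \lor w) = \lnot z \land \lnot w$, and the standard monotonicity $(a \Rightarrow b) \leq (c \Rightarrow a) \Rightarrow (c \Rightarrow b)$.

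The crux is the Heyting-algebra identity $(\star)$: $\lnot(u \to v) = u \to \lnot v$ in $\mathbb{C}(\mathbf{H})$. I would prove it by double inclusion. For $\lnot(u \to v) \leq u \to \lnot v$, observe that both $u \land v$ and $\lnot u \land \lnot v$ lie below $u \to v$ (trivial componentwise checks), so $\lnot(u \to v) \leq \lnot(u \land v) = u \Rightarrow \lnot v$ and $\lnot(u \to v) \leq \lnot u \Rightarrow \lnot \lnot v$. For the reverse, the product $(u \to \lnot v) \land (u \to v) = 0$ is verified componentwise: $(u \Rightarrow \lnot v) \land (u \Rightarrow v) \leq \lnot u$ and $(\lnot u \Rightarrow \lnot \lnot v) \land (\lnot u \Rightarrow \lnot v) = \lnot u \Rightarrow 0 = \lnot \lnot u$, and $\lnot u \land \lnot \lnot u = 0$.

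With $(\star)$ established, C2 reduces (via $\lnot$-rewriting) to $(x \to y) \to (x \to \lnot \lnot y) = 1$, which is settled by $(\dagger)$: the inequality $x \to y \leq x \to \lnot \lnot y$ holds componentwise (using $y \leq \lnot \lnot y$ and $\lnot \lnot \lnot y = \lnot y$) and both sides have connexive negation equal to $x \to \lnot y$. For C1, the inequality version $(x \to y) \leq (y \to z) \to (x \to z)$ splits into $(x \to y) \land (y \to z) \leq x \to z$ (transitivity in each component) and, via $(\star)$, $(x \to y) \land (y \to \lnot z) \leq x \to \lnot z$ (the same transitivity with $\lnot z$ in the third slot). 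The upgrade from this inequality to the full equation C1 is then obtained by replaying the chain of equalities exhibited in the excerpt immediately after Theorem \ref{thm:ndcd-rdr}; all of its ingredients---$(\dagger)$, $(\star)$, C2--C5, and a $\mathbb{C}(\mathbf{H})$-version of Lemma \ref{lem:arit}.(2)---are available in our setting, the last following from the inequality-C1 just proved together with the auxiliary Heyting fact $a \to b \leq \lnot a \to \lnot b$ (itself a short calculation showing $(a \to b) \land \lnot \lnot a \land \lnot b = 0$). The principal obstacle is $(\star)$: Heyting negation does not distribute over $\land$, so no naive rewriting of $\lnot(u \to v)$ is available, and the double-inclusion argument above is essential.
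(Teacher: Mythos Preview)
Your approach is correct and takes a genuinely different route from the paper. The paper verifies C1 and C2 by lengthy direct Heyting computations---for C2 it shows separately that $(a\to b)\Rightarrow\lnot(a\to\lnot b)=1$ and $\lnot(a\to b)\Rightarrow\lnot\lnot(a\to\lnot b)=1$, the latter via two chains bounding the relevant product by both $\lnot\lnot a$ and $\lnot a$; for C1 it reduces to the inequality and then carries out another long calculation for the $\lnot$-component. You instead isolate the single identity $(\star)$ $\lnot(u\to v)=u\to\lnot v$ as the pivotal lemma and prove it directly; with $(\star)$ in hand, C2 collapses to the one-line check $(x\to y)\to(x\to\lnot\lnot y)=1$ via $(\dagger)$, and the negation half of C1 becomes ``transitivity with $\lnot z$''. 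This is cleaner and more conceptual: you are effectively proving Lemma~\ref{lem:arit}.(20) \emph{first}, inside the Heyting algebra, and letting it do the heavy lifting.

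One small gap: when you replay the chain after Theorem~\ref{thm:ndcd-rdr}, that chain also invokes Lemma~\ref{lem:arit}.(19), namely $\lnot(a\to b)=\lnot(b\to a)$, three times---and the paper's proof of (19) uses C1, so you cannot cite it. You list $(\dagger)$, $(\star)$, C2--C5 and (2) as your available ingredients but not (19). This is easily patched: by $(\star)$, item (19) amounts to $a\to\lnot b=b\to\lnot a$, and in any Heyting algebra $a\Rightarrow\lnot b=\lnot(a\land b)=b\Rightarrow\lnot a$ and $\lnot a\Rightarrow\lnot\lnot b=\lnot(\lnot a\land\lnot b)=\lnot b\Rightarrow\lnot\lnot a$, so both conjuncts match. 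With this one line added your argument goes through.
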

\begin{proof}
We show that $\mathbb{C}(\mathbf{H})$ satisfies C1 through C5 in Definition \ref{putacaso}. In so doing, we use without a mention some well-known properties of Heyting algebras. Throughout this proof, let $a,b,c$ be arbitrary elements of $H$. As regards C3:
\begin{align*}
(a\rightarrow b)\land a=& (a\Rightarrow b)\land(\neg a\Rightarrow \neg b)\land a\\
=& a\land b\land(\neg a\Rightarrow\neg b)\\
=& a\land b.
\end{align*}
We now move on to C4. We must establish that%
\[
\left(  a\Rightarrow b\right)  \wedge\left(  \lnot a\Rightarrow\lnot b\right)
\leq\left(  \left(  c\wedge a\right)  \Rightarrow\left(  c\wedge b\right)
\right)  \wedge\left(  \lnot\left(  c\wedge a\right)  \Rightarrow\lnot\left(
c\wedge b\right)  \right)  \text{.}%
\]

Observe first that%
\begin{align*}
\left(  a\Rightarrow b\right)  \wedge\left(  \lnot a\Rightarrow\lnot b\right)
\wedge c\wedge a  & =a\wedge b\wedge c\wedge\left(  \lnot a\Rightarrow\lnot
b\right)  \\
& =a\wedge b\wedge c\leq c\wedge b\text{.}%
\end{align*}

Hence%
\[
\left(  a\Rightarrow b\right)  \wedge\left(  \lnot a\Rightarrow\lnot b\right)
\leq  c\wedge a  \Rightarrow  c\wedge b  \text{.}%
\]

On the other hand, $0=c\wedge a\wedge\lnot\left(  c\wedge a\right)  $, whence
$c\wedge\lnot\left(  c\wedge a\right)  \leq\lnot a$. Thus%
\begin{align*}
c\wedge\lnot\left(  c\wedge a\right)  \wedge b\wedge\left(  a\Rightarrow
b\right)  \wedge\left(  \lnot a\Rightarrow\lnot b\right)    & =c\wedge
\lnot\left(  c\wedge a\right)  \wedge b\wedge\left(  \lnot a\Rightarrow\lnot
b\right)  \\
& \leq\lnot a\wedge b\wedge\left(  \lnot a\Rightarrow\lnot b\right)  \\
& =\lnot a\wedge b\wedge\lnot b=0\text{.}%
\end{align*}

Hence $\lnot\left(  c\wedge a\right)  \wedge\left(  a\Rightarrow b\right)
\wedge\left(  \lnot a\Rightarrow\lnot b\right)  \leq\lnot\left(  c\wedge
b\right)  $, and $\left(  a\Rightarrow b\right)  \wedge\left(  \lnot
a\Rightarrow\lnot b\right)  \leq\lnot\left(  c\wedge a\right)  \Rightarrow
\lnot\left(  c\wedge b\right)  $. Summing up, our claim follows. 

C5 is established similarly.

By the remarks following Theorem \ref{thm:ndcd-rdr}, to prove C1 it is enough to show that  $a\rightarrow b\leq(b\rightarrow c)\rightarrow (a\rightarrow c)$, which in turn holds if and only if:
\begin{enumerate} [(a)]
 \item $(a\Rightarrow b)\land(\neg a\Rightarrow\neg b)\leq (b\rightarrow c)\Rightarrow (a\rightarrow c)$, and
 \item $(a\Rightarrow b)\land(\neg a\Rightarrow\neg b)\leq \neg(b\rightarrow c)	\Rightarrow\neg(a\rightarrow c)$.
\end{enumerate}
Concerning (a), one has that $(a\Rightarrow b)\land(\neg a\Rightarrow\neg b)\land (b\Rightarrow c)\land (\neg b\Rightarrow\neg c)\leq (a\Rightarrow c)\land(\neg a\Rightarrow\neg c)$. So $(a\Rightarrow b)\land(\neg a\Rightarrow\neg b)\leq [(b\Rightarrow c)\land (\neg b\Rightarrow\neg c)]\Rightarrow [(a\Rightarrow c)\land(\neg a\Rightarrow\neg c)]=(b\rightarrow c)\Rightarrow(a\rightarrow c)$.\\
As regards (b), we have $(a\Rightarrow b)\land(\neg a\Rightarrow\neg b)\leq \neg(b\rightarrow c)\Rightarrow\neg(a\rightarrow c)\text{ iff } (a\Rightarrow b)\land(\neg a\Rightarrow\neg b)\land \neg((b\Rightarrow c)\land(\neg b\Rightarrow\neg c))\land (a\Rightarrow c)\land (\neg a\Rightarrow\neg c)=0$. We compute
\begin{align*}
(a\Rightarrow b)\land(\neg a\Rightarrow\neg b)\land \neg((b\Rightarrow c)\land(\neg b\Rightarrow\neg c))\land (a\Rightarrow c)\land (\neg a\Rightarrow\neg c)&\leq\\
(\neg b\Rightarrow \neg a)\land(\neg a\Rightarrow\neg b)\land((\neg b\Rightarrow\neg c)\Rightarrow\neg(b\Rightarrow c))\land (a\Rightarrow c)\land (\neg a\Rightarrow\neg c) &\leq\\
(\neg b\Rightarrow \neg c)\land(\neg a\Rightarrow\neg b)\land((\neg b\Rightarrow\neg c)\Rightarrow\neg(b\Rightarrow c))\land (a\Rightarrow c)&=\\
(\neg b\Rightarrow \neg c)\land(\neg a\Rightarrow\neg b)\land\neg(b\Rightarrow c)\land (a\Rightarrow c)&\leq\\
(\neg b\Rightarrow \neg c)\land(\neg a\Rightarrow\neg b)\land\neg(b\Rightarrow c)\land (\neg c\Rightarrow \neg a)&\leq\\
(\neg b\Rightarrow \neg c)\land (\neg c\Rightarrow\neg b)\land\neg(b\Rightarrow c)&=\\
(\neg b\Rightarrow \neg c)\land \neg\neg(b\Rightarrow c)\land\neg(b\Rightarrow c) &= 0.
\end{align*}
In fact, one has that, for any $a,b\in H$, $a\land\neg b\leq \neg(a\Rightarrow b)$ entails $\neg\neg(a\Rightarrow b)\leq\neg (a\land\neg b)=\neg b\Rightarrow\neg a$. Moreover, $\neg a\lor b\leq a\Rightarrow b$ implies $\neg b\Rightarrow\neg a=\llnot(\neg a\lor b)\leq\llnot(a\Rightarrow b)$.\\ 

Finally, concerning C2, we have that $(a\rightarrow b)\rightarrow\neg(a\rightarrow\neg b)=1$ if and only if
\begin{enumerate}[(a)]
 \item $(a\rightarrow b)\Rightarrow\neg (a\rightarrow\neg b) = 1$, and
 \item $\neg (a\rightarrow b)\Rightarrow \llnot (a\rightarrow\neg b) = 1$.
\end{enumerate}
As regards (a), we have that:
\begin{align*}
(a\rightarrow b)\Rightarrow\neg (a\rightarrow\neg b) = 1 &\text{ iff }(a\rightarrow b)\leq\neg (a\rightarrow\neg b)\\
&\text{ iff }(a\rightarrow b)\land (a\rightarrow\neg b)=0&\\
&\text{ iff }(a\Rightarrow b)\land(\neg a\Rightarrow\neg b)\land(a\Rightarrow\neg b)\land (\neg a\Rightarrow\llnot b) = 0\\
&\text{ iff } (a\Rightarrow (b\land\neg b))\land (\neg a\Rightarrow(\neg b\land\llnot b))=\neg a\land\llnot a=0.
\end{align*}
Since the last identity trivially holds, (a) is proved.\\
Concerning (b), it is easily seen that $\mathbf{H}$ satisfies, for any $a,b_{1},b_{2},c_{1},c_{2}\in H$:
\begin{equation}\label{eq: auxH}
((a\Rightarrow b_{1})\Rightarrow c_{1})\land((a\Rightarrow b_{2})\Rightarrow c_{2})\leq (a\Rightarrow (b_{1}\land b_{2}))\Rightarrow (c_{1}\land c_{2}).
\end{equation}
Now, we have $\neg(a\rightarrow b)\leq\llnot(a\rightarrow\neg b)$ iff $\neg(a\rightarrow b)\land\neg (a\rightarrow\neg b)=\neg((a\Rightarrow b)\land(\neg a\Rightarrow\neg b))\land\neg((a\Rightarrow \neg b)\land(\neg a\Rightarrow\llnot b))=0$. By \eqref{eq: auxH}, we compute
\begin{align*}
\neg((a\Rightarrow b)\land(\neg a\Rightarrow\neg b))\land\neg((a\Rightarrow \neg b)\land(\neg a\Rightarrow\llnot b))&=\\
((a\Rightarrow b)\Rightarrow\neg(\neg a\Rightarrow\neg b))\land((a\Rightarrow \neg b)\Rightarrow\neg(\neg a\Rightarrow\llnot b))&\leq\\
(a\Rightarrow (b\land\neg b))\Rightarrow(\neg(\neg a\Rightarrow\neg b)\land\neg(\neg a\Rightarrow\llnot b))&=\\
(\neg a\Rightarrow \neg(\neg a\Rightarrow\neg b))\land(\neg a\Rightarrow\neg(\neg a\Rightarrow\llnot b))&=\\
\neg(\neg a\land\neg b)\land\neg(\neg a\land\llnot b)&=\\
(\neg a\Rightarrow\llnot b)\land(\neg a\Rightarrow\lnot b)&=\\
\neg a\Rightarrow (\llnot b\land\lnot b)&=\llnot a.
\end{align*}
Similarly, one has: 
\begin{align*}
\neg((a\Rightarrow b)\land(\neg a\Rightarrow\neg b))\land\neg((a\Rightarrow \neg b)\land(\neg a\Rightarrow\llnot b))&=\\
((\neg a\Rightarrow\neg b)\Rightarrow\neg (a\Rightarrow b))\land ((\neg a\Rightarrow\llnot b)\Rightarrow\neg (a\Rightarrow \neg b))&\leq\\
(\neg a\Rightarrow(\neg b\land\llnot b))\Rightarrow(\neg (a\Rightarrow b)\land\neg (a\Rightarrow \neg b))&=\\
(\llnot a\Rightarrow\neg (a\Rightarrow b))\land(\llnot a\Rightarrow\neg (a\Rightarrow \neg b))&=\\
((a\Rightarrow b)\Rightarrow\neg a)\land((a\Rightarrow\neg b)\Rightarrow\neg a)&=\\
\neg(a\land b)\land\neg(a\land\neg b)&=\\
(a\Rightarrow\lnot b)\land(a\Rightarrow\llnot b)&=\neg a.
\end{align*}
Therefore, since we have $\neg((a\Rightarrow b)\land(\neg a\Rightarrow\neg b))\land\neg((a\Rightarrow \neg b)\land(\neg a\Rightarrow\llnot b))\leq\llnot a\land\lnot a=0$, the desired result obtains.
\end{proof}

\begin{theorem}\label{cor: termequiv}
The varieties $\mathcal{CHA}$ and $\mathcal{HA}$ are term equivalent. The term equivalence is implemented by the mutually inverse maps $\mathbb{H}$ of Theorem \ref{thm: connexiveheyting} and $\mathbb{C}$ of Theorem \ref{thm: heyting->connexive}. 
\end{theorem}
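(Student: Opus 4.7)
The plan is to verify that $\mathbb{H}$ and $\mathbb{C}$ are mutually inverse as maps between $\mathcal{CHA}$ and $\mathcal{HA}$. Both leave the lattice reduct and the constants $0,1$ untouched, so the only content is checking that the two compositions send the implication symbol back to itself. A useful preliminary observation is that Heyting negation in $\mathbb{H}(\alga)$ agrees with the CHA negation of $\alga$, since $a \Rightarrow 0 = a \to (a \wedge 0) = a \to 0 = \lnot a$.

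For $\mathbb{H} \circ \mathbb{C} = \mathrm{id}_{\mathcal{HA}}$, starting from $\mathbf{H} \in \mathcal{HA}$ I would unfold the Heyting arrow of $\mathbb{H}(\mathbb{C}(\mathbf{H}))$ as
\[
a \Rightarrow' b = (a \Rightarrow (a \wedge b)) \wedge (\lnot a \Rightarrow \lnot(a \wedge b))
\]
and simplify: the first conjunct reduces to the standard Heyting identity $a \Rightarrow b$, and the second is $1$ because $a \wedge b \leq a$ forces $\lnot a \leq \lnot(a \wedge b)$. This direction is essentially immediate.

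For $\mathbb{C} \circ \mathbb{H} = \mathrm{id}_{\mathcal{CHA}}$, given $\alga \in \mathcal{CHA}$ I must prove the identity $R = L$, where $R := a \to b$ and $L := (a \to (a \wedge b)) \wedge (\lnot a \to (\lnot a \wedge \lnot b))$. The easy inequality $R \leq L$ splits into $R \leq a \to (a \wedge b)$, which follows from C4 with $z := a$, and $R \leq \lnot a \to (\lnot a \wedge \lnot b)$. For the latter, the auxiliary step is $(a \to b) \wedge \lnot a \leq \lnot b$: combining Lemma \ref{lem:arit}.(1) with Lemma \ref{lem:arit}.(7) yields $(a \to b) \wedge (b \to \lnot a) \leq a \to \lnot a = 0$, and C3 gives $(b \to \lnot a) \wedge b = b \wedge \lnot a$, whence $(a \to b) \wedge b \wedge \lnot a = 0$; the Heyting adjunction in $\mathbb{H}(\alga)$ then converts this into $R \leq \lnot a \to (\lnot a \wedge \lnot b)$.

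The reverse inequality $L \leq R$ is the main obstacle. A direct meet computation (using C3 and the Heyting adjunction) shows $L \wedge a = R \wedge a = a \wedge b$ and $L \wedge \lnot a = R \wedge \lnot a$, but these coincidences cannot by themselves force $L = R$, because $a \vee \lnot a$ need not equal $1$. My plan is to invoke Theorem \ref{thm:ndcd-rdr}, which reduces $L = R$ to $L \leftrightarrow R = 1$, and to establish the latter using the CHA arithmetic of Lemma \ref{lem:arit}, particularly items (17)--(20), which pin down $\lnot(x \to y) = x \to \lnot y = \lnot(y \to x)$ and hence control the negations of both $L$ and $R$. As a parallel route, I would attempt a uniqueness argument: since $\alga$ and $\mathbb{C}(\mathbb{H}(\alga))$ are both $\mathcal{CHA}$-structures sharing the same lattice reduct and the same Heyting arrow $\Rightarrow$, the filter-congruence correspondence of Lemma \ref{lem: filterscongr} together with the QD term of Theorem \ref{genoveffa} should pin the CHA implication down uniquely.
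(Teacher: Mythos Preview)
Your treatment of $\mathbb{H}\circ\mathbb{C}=\mathrm{id}_{\mathcal{HA}}$ and of the inequality $R\leq L$ is correct and matches the paper. The gap is in the reverse inequality $L\leq R$, which you only sketch. Your first plan, reducing $L=R$ to $L\leftrightarrow R=1$ via Theorem~\ref{thm:ndcd-rdr}, is not a genuine reduction: by that theorem, $L\to R=1$ is equivalent to $L\leq R$ together with $\lnot L=\lnot R$, so you would still need the very inequality you are trying to prove. Controlling $\lnot L$ and $\lnot R$ through Lemma~\ref{lem:arit}.(17)--(20) buys you at most $\lnot L=\lnot R$, and combined with $R\leq L$ this yields only $R\to L=1$, not $L\leq R$. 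Your second plan, a uniqueness argument via the filter--congruence correspondence and the QD term, is not enough either: two $\mathcal{CHA}$-structures on the same bounded lattice share the same congruence lattice (since congruences correspond to lattice filters), but that alone does not pin down the binary operation $\to$.

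The paper closes this gap by first proving the auxiliary identity $x\wedge(z\to(x\wedge y))\approx x\wedge(z\to y)$ from C6, and then establishing the characterisation
\[
a\to b=\max\{c\in A: a\wedge c\leq b\text{ and }\lnot a\wedge c\leq\lnot b\}.
\]
Your computations $L\wedge a=a\wedge b\leq b$ and $L\wedge\lnot a\leq\lnot b$ already place $L$ in this set, so the max characterisation immediately gives $L\leq a\to b=R$. The substantive work is in showing that $a\to b$ dominates every such $c$: from $\lnot a\wedge c\leq\lnot b$ one gets $b\wedge c\leq\lnot\lnot a$, and then C4 together with $a\to\lnot\lnot a=1$ forces $(a\wedge c)\to(b\wedge c)=1$; finally $c\leq a\to(a\wedge c)\leq a\to(b\wedge c)$, and the auxiliary identity converts $c\wedge(a\to(b\wedge c))$ into $c\wedge(a\to b)$, yielding $c\leq a\to b$. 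This is the missing idea in your argument.
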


\begin{proof}
By Theorems \ref{thm: connexiveheyting} and \ref{thm: heyting->connexive}, the maps $\mathbb{H}$ and $\mathbb{C}$ are well-defined. It remains to be shown that they are mutually inverse, namely, that (a) if $\alga \in \mathcal{CHA}$, then $\mathbb{C}(\mathbb{H}(\alga))=\alga$, and (b) if $\algb \in \mathcal{HA}$, then $\mathbb{H}(\mathbb{C}(\algb))=\algb$.

First, we observe that any connexive Heyting algebra satisfies the following identity: 
\[x\land(z\rightarrow(x\land y))\approx x\land(z\rightarrow y).\]
Indeed, let $\alga \in \mathcal{CHA}$, and let $a,b,c \in A$.
By C6, $a\land (c\to (a\land b))=a\land ((a\land c)\to (a\land(a\land b)))=a\land ((a\land c)\to (a\land b))=a\land (c\to b)$. Next, we show:
\begin{itemize}
\item (P1) $a\rightarrow b=\mathrm{max}\{c\in A:a\land c\leq b\text{ and }\neg a\land c\leq\neg b\}$;
\item (P2) $a\rightarrow b=(a\rightarrow(a\land b))\land(\neg a\rightarrow(\neg a\land\neg b))$.
\end{itemize}
Note that $a\land(a\rightarrow b)\leq b$, by $\C 3$, and $\neg a\land(a\rightarrow b)\leq \neg a\land ((0\rightarrow a)\rightarrow(0\rightarrow b))=\neg a\land (\neg a\rightarrow\neg b)\leq \neg b$, by C3, Lemma \ref{lem:connexalgarithm1}.(2) and Lemma \ref{lem:arit}.(2)-(13). Now, let $c$ be such that $a\land c\leq b$ and $\neg a\land c\leq \lnot b$. By Lemma \ref{lem:connexalgarithm1}.(4) $c\land b\leq\llnot a$. Moreover, applying C4 and Lemma \ref{lem:arit}.(12)-(21), as well as the previous observations, $1 = a\rightarrow\llnot a\leq (a\land b\land c)\rightarrow (\llnot a\land b\land c)=(a\land c)\rightarrow(b\land c)$. Hence, in virtue of Lemma \ref{lem:connexalgarithm1}, $c\leq a\rightarrow(a\land c)\leq((a\land c)\rightarrow(b\land c))\rightarrow(a\rightarrow(b\land c))=1\rightarrow(a\rightarrow(b\land c))=a\rightarrow(b\land c)$. By the previously established identity, one has $c\leq a\rightarrow b$. Hence P1 follows.

Concerning P2, in light of P1 it suffices to show:
\begin{itemize}
    \item $(a\rightarrow(a\land b))\land(\neg a\rightarrow(\neg a\land\neg b)) \in \{c\in A:a\land c\leq b\text{ and }\neg a\land c\leq\neg b\}$;
    \item if $a\land c\leq b\text{ and }\neg a\land c\leq\neg b$, then $c \leq (a\rightarrow(a\land b))\land(\neg a\rightarrow(\neg a\land\neg b))$.
\end{itemize}
For the first bullet, we have that $a \land (a\rightarrow(a\land b))\land(\neg a\rightarrow(\neg a\land\neg b)) = a\land b \land (\neg a\rightarrow(\neg a\land\neg b)) \leq b$, and similarly $\lnot a \land (a\rightarrow(a\land b))\land(\neg a\rightarrow(\neg a\land\neg b)) \leq \lnot b$. For the second, if $a\land c\leq b$, then $a\land c\leq a \land b$, whence $c \leq a \to (a\land c) \leq a \to (a \land b)$. Similarly $c \leq \lnot a \to (\lnot a \land \lnot b)$, whence our conclusion follows.

Now, $\mathbb{C}(\mathbb{H}(\alga))=\alga$ is immediate by P2. In order to prove $\mathbb{H}(\mathbb{C}(\algb))=\algb$, just note that in $\algb$, for any $a,b \in B$, we have that $a \Rightarrow b = (a \Rightarrow (a \land b)) \land (\lnot a \Rightarrow \lnot (a \land b))$. 
\end{proof}

\subsection{The Boolean subvariety}

A noteworthy consequence of the results in the previous subsection is that there are continuum many subvarieties of $\mathcal{CHA}$, arranged in a lattice whose single atom is a term equivalent incarnation of the variety of Boolean algebras. We now aim at describing precisely this atom. Preliminarly, we prove the following lemma:

\begin{lemma}\label{char:symimpl}
Let $\alga$ be a connexive Heyting algebra. Then, for any $a\in A$, \[\llnot a = a\text{ if and only if }a\rightarrow b\leq b\rightarrow a,\text{ for any }b\in A.\]
\end{lemma}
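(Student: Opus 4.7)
The plan is to prove the two implications separately, staying inside the arithmetic of $\mathcal{CHA}$ developed in Lemmas \ref{lem:connexalgarithm1} and \ref{lem:arit}.

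For the right-to-left direction, I would simply specialise the universal hypothesis at $b := \llnot a$. Lemma \ref{lem:arit}.(12) gives $a \to \llnot a = 1$, so the hypothesis $a \to b \leq b \to a$ forces $\llnot a \to a = 1$; Lemma \ref{lem:connexalgarithm1}.(2) then yields $\llnot a \leq a$, and combined with the general inequality $a \leq \llnot a$ from Lemma \ref{lem:connexalgarithm1}.(7) we conclude $\llnot a = a$.

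For the forward direction, the crucial observation is that regularity of $a$ propagates to every implication with codomain $a$. Applying Lemma \ref{lem:arit}.(20) twice one obtains
\[
\llnot(b \to a) \;=\; \lnot(b \to \lnot a) \;=\; b \to \llnot a,
\]
so under the hypothesis $\llnot a = a$ we get $\llnot(b \to a) = b \to a$. On the other hand, instantiating C1 at $x := a$, $y := b$, $z := a$ and using C7 together with Lemma \ref{lem:connexalgarithm1}.(2) produces the inequality $a \to b \leq (b \to a) \to 1$, whose right-hand side is exactly $\llnot(b \to a)$ by Lemma \ref{lem:arit}.(18). Chaining the two, $a \to b \leq \llnot(b \to a) = b \to a$, as required.

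The one step requiring any thought is spotting the chain of rewrites in the forward direction; once the identity $\llnot(b \to a) = b \to \llnot a$ is isolated, the argument essentially writes itself. An alternative, less economical route would be to transfer the problem to $\mathbb{H}(\alga)$ via Theorem \ref{cor: termequiv} and argue by contraposition in the associated Heyting algebra, exploiting that $a \to b = (a \Rightarrow b) \land (\lnot a \Rightarrow \lnot b)$ and that $\llnot a = a$ forces $(\lnot a \Rightarrow \lnot b) \leq \llnot b \Rightarrow a \leq b \Rightarrow a$.
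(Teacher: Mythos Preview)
Your proof is correct and follows essentially the same strategy as the paper: both directions rest on showing $\llnot(b\to a)=b\to\llnot a$ via two applications of Lemma~\ref{lem:arit}.(20), so that the hypothesis $\llnot a=a$ makes $b\to a$ regular. The only cosmetic difference is in how you reach $a\to b\leq\llnot(b\to a)$: you go through C1 and Lemma~\ref{lem:arit}.(18), whereas the paper uses Lemma~\ref{lem:connexalgarithm1}.(7) together with the symmetry $\lnot(a\to b)=\lnot(b\to a)$ from Lemma~\ref{lem:arit}.(19), which is marginally more direct.
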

\begin{proof}
The right-to-left direction follows from Lemma \ref{lem:connexalgarithm1}.(7). Conversely, note that by Lemmas \ref{lem:connexalgarithm1}.(1) and \ref{lem:arit}.(19)-(20), $b\rightarrow a =b\rightarrow\llnot a=\llnot(b\rightarrow a)=\llnot(a\rightarrow b)\geq a\rightarrow b.$
\end{proof}

We are now ready to characterise, in several different ways, the variety of connexive Heyting algebras that is term equivalent to the variety $\mc{BA}$ of Boolean algebras. Note that, alongside with the predictable demand that every element be Glivenko-closed (item 6), other equivalent conditions that axiomatise it relative to $\mathcal{CHA}$ include the symmetry of connexive implication (items 3, 4, 5) and its coincidence with material equivalence (item 2).  

\begin{lemma}\label{lem: charboolean} Let $\mc{V}$ be a subvariety of $\mathcal{CHA}$. The following are equivalent:
\begin{enumerate}
\item $\mc V$ is term equivalent to the variety $\mc{BA}$ of Boolean algebras;
\item $\mc V\models x\rightarrow y\approx (\neg x\lor y)\land (\neg y\lor x)$;
\item $\mc V\models x\rightarrow y\approx y\rightarrow x$;
\item $\mc V\models(x\rightarrow y)\rightarrow(y\rightarrow x)\approx 1$;
\item The following quasi-identity holds in $\mc V$:
\[x\rightarrow y = 1\quad\curvearrowright\quad y\rightarrow x=1;\]
\item $\mc V\models \neg\neg x\approx x$.
\end{enumerate}
\end{lemma}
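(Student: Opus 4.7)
The plan is to establish a cycle visiting all six conditions: I will verify $(1) \Leftrightarrow (6)$, $(6) \Leftrightarrow (3)$, $(3) \Rightarrow (4) \Rightarrow (5) \Rightarrow (6)$, and $(6) \Leftrightarrow (2)$. The principal engine is Theorem \ref{cor: termequiv}, supplemented by Lemma \ref{char:symimpl} together with a handful of arithmetic facts from Lemmas \ref{lem:connexalgarithm1} and \ref{lem:arit}.

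For $(1) \Leftrightarrow (6)$, the mutually inverse maps $\mathbb{H}$ and $\mathbb{C}$ of Theorem \ref{cor: termequiv} induce a bijection between subvarieties of $\mc{CHA}$ and subvarieties of $\mc{HA}$, sending $\mc V$ to $\mathbb{H}(\mc V)$. The crucial observation is that the Heyting negation in $\mathbb{H}(\alga)$, namely $x \Rightarrow 0 = x \to (x \land 0) = x \to 0$, coincides with the CHA negation $\neg x$ of $\alga$. Since $\mc{BA}$ is precisely the subvariety of $\mc{HA}$ axiomatised by $\llnot x \approx x$, we have $\mathbb{H}(\mc V) = \mc{BA}$ iff $\mc V \models \llnot x \approx x$.

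For $(6) \Leftrightarrow (3)$, Lemma \ref{char:symimpl} gives $\llnot a = a$ iff $a \to b \leq b \to a$ for every $b$. If (6) holds universally, swapping $a$ and $b$ yields also $b \to a \leq a \to b$, whence the equality in (3); conversely (3) trivially implies the inequality needed in Lemma \ref{char:symimpl}. The short chain $(3) \Rightarrow (4) \Rightarrow (5) \Rightarrow (6)$ consists of one-line checks: $(3) \Rightarrow (4)$ since $(x \to y) \to (y \to x) = (x \to y) \to (x \to y) = 1$ by C7; $(4) \Rightarrow (5)$ since if $x \to y = 1$ then $y \to x = 1 \to (y \to x) = (x \to y) \to (y \to x) = 1$ by Lemma \ref{lem:connexalgarithm1}.(1); and $(5) \Rightarrow (6)$ by applying the quasi-identity to the theorem $x \to \llnot x = 1$ of Lemma \ref{lem:arit}.(12) to obtain $\llnot x \to x = 1$, whence Theorem \ref{thm:ndcd-rdr} yields $\llnot x \leq x$, which combined with $x \leq \llnot x$ from Lemma \ref{lem:connexalgarithm1}.(7) gives (6).

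Finally, for $(6) \Leftrightarrow (2)$: under (6) the Heyting reduct $\mathbb{H}(\alga)$ is Boolean, so $x \Rightarrow y = \neg x \lor y$; since $\alga = \mathbb{C}(\mathbb{H}(\alga))$ by Theorem \ref{cor: termequiv}, the CHA arrow computes as $x \to y = (x \Rightarrow y) \land (\neg x \Rightarrow \neg y) = (\neg x \lor y) \land (\llnot x \lor \neg y) = (\neg x \lor y) \land (\neg y \lor x)$, invoking (6) once more to replace $\llnot x$ by $x$. The converse $(2) \Rightarrow (3)$ is the manifest symmetry of the right-hand side of (2) in $x$ and $y$, and (3) already feeds back into (6) via the chain above. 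The main subtlety is the computation in $(6) \Rightarrow (2)$, which hinges on the alignment between CHA and Heyting negations under the term equivalence; once this identification is in place, every remaining step reduces to previously established facts.
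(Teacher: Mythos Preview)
Your proof is correct and follows essentially the same route as the paper's: the equivalences $(1)\Leftrightarrow(6)\Leftrightarrow(3)$ via the term equivalence and Lemma~\ref{char:symimpl}, the chain $(3)\Rightarrow(4)\Rightarrow(5)\Rightarrow(6)$, and the computation of $(6)\Rightarrow(2)$ through the Boolean form of $\Rightarrow$ all match the paper's argument, with only cosmetic differences (the paper derives $(2)\Rightarrow(1)$ by specialising to $y:=x$ to obtain excluded middle, whereas you observe the symmetry of the right-hand side to get $(2)\Rightarrow(3)$). One tiny remark: in $(5)\Rightarrow(6)$ you may simply cite Lemma~\ref{lem:connexalgarithm1}.(2) for $\llnot x\leq x$ rather than the heavier Theorem~\ref{thm:ndcd-rdr}.
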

\begin{proof} We first show that items (1), (2), (3), and (6) are all pairwise equivalent. By Lemma \ref{char:symimpl}, (3) is equivalent to (6), which is clearly equivalent to (1). If (2) holds, then in particular for all $a \in \alga \in \mathcal{V}$, $1 = a \to a = \lnot a \lor a$, and (1) follows. Finally, if (1) holds, then for all $a \in \alga \in \mathcal{V}$, $a \Rightarrow b = \lnot a \lor b$ and $\lnot a \Rightarrow \lnot b = b \Rightarrow a = \lnot b \lor a$, whence $a \to b = (a \Rightarrow b) \land (\lnot a \Rightarrow \lnot b) = (\lnot a \lor b) \land (\lnot b \lor a)$. Hence our claim is established.

(3) implies (4) by C7, and (4) implies (5) by Lemma \ref{lem:connexalgarithm1}.(1). Finally, (5) implies (6) as $a\rightarrow\llnot a= 1$ together with (5) entails that $\llnot a\rightarrow a = 1$, i.e. $a=\llnot a$.
 \end{proof}

Let us call $\mathcal{CBA}$ the variety which is axiomatised relative to $\mathcal{CHA}$ by any of these equivalent conditions; its members will be called \emph{connexive Boolean algebras}. We vigorously flag the fact that in $\mathcal{CBA}$ the connexive arrow denotes material equivalence, not material implication (which is denoted by the Heyting arrow).

The next example considers another subvariety of interest of $\mathcal{CHA}$: connexive G\"odel algebras.

\begin{example}Let $\mc{CGA}$ be the subvariety of $\mathcal{CHA}$ generated by all chains, whose relative equational basis with respect to $\mathcal{CHA}$ is the single identity
\[(x\rightarrow (x\land y))\lor (y\rightarrow (x\land y)) \approx 1.\label{sl}\tag{G}\]
This variety has been studied in \cite{Abad, DCV}. $\mc{CGA}$ is term equivalent to G\"odel algebras; it is not hard to show, using Theorem \ref{cor: termequiv}, that the equational basis provided in \cite{Abad} for $\mc{CGA}$ is equivalent (relative to $\mathcal{CHA}$) to \ref{sl}. Also, observe that any chain  $\mathbf{L}= \langle L,\land,\lor,0,1 \rangle$ can be uniquely equipped with a binary operation $``\rightarrow"$ such that $\langle L,\land,\lor,\rightarrow,0,1 \rangle \in\mc{CGA}$ by setting
\begin{equation}
    a\rightarrow b=
    \begin{cases}
      1 & \text{if } a \leq b\text{ and }\neg a=\neg b\\
      a\land b & \text{otherwise,}
    \end{cases}
  \end{equation} 
where
\begin{equation}
    \neg a=
    \begin{cases}
      1, & \text{if } a = 0 \\
      0 & \text{otherwise.}
    \end{cases}
  \end{equation}
\end{example}

We conclude this subsection by parlaying the above theorems into some Glivenko-style translation results. Let us set $\overline{A}=\{\llnot a:a\in A\}$, and consider the following binary operations over $\overline{A}$:\[x\Cap y=x\land y\quad\text{  and  }\quad x\Cup y=\llnot(x\lor y).\]
\begin{theorem}
Let $\alga\in\mathcal{CHA}$. Then the structure $\overline{\alga}=\langle \overline{A},\Cap,\Cup,\rightarrow,0,1 \rangle \in \mathcal{CBA}$. Moreover, the mapping $\llnot:A\rightarrow\overline{A}$ is an onto $\{\land,\rightarrow,0,1\}$-morphism.
\end{theorem}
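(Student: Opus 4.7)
The plan is to reduce the statement to the classical Glivenko theorem for Heyting algebras, applied to the Heyting reduct $\mathbb{H}(\alga)$ from Theorem \ref{thm: connexiveheyting}. Note first that the connexive negation $\lnot a = a \to 0$ coincides with the Heyting negation in $\mathbb{H}(\alga)$, since $a \Rightarrow 0 = a \to (a \land 0) = a \to 0$. Hence $\overline{A}$ is precisely the set of regular elements of $\mathbb{H}(\alga)$, and by Glivenko's theorem $\overline{A}$ is closed under $\land, \Rightarrow, 0, 1$, and the structure $\overline{B} := \langle \overline{A}, \Cap, \Cup, \Rightarrow, 0, 1 \rangle$ is a Boolean algebra, with $\Rightarrow$ (restricted to $\overline{A}$) agreeing with the Boolean implication.

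The central step is the identification $\overline{\alga} = \mathbb{C}(\overline{B})$, where $\mathbb{C}$ is the functor of Theorem \ref{thm: heyting->connexive}. By Theorem \ref{cor: termequiv} we have $a \to b = (a \Rightarrow b) \land (\lnot a \Rightarrow \lnot b)$ in $\alga$. For $a, b \in \overline{A}$, the element $\lnot a$ is regular by Lemma \ref{lem:connexalgarithm1}.(10), so both $a \Rightarrow b$ and $\lnot a \Rightarrow \lnot b$ lie in $\overline{A}$; their meet coincides with the connexive arrow that the $\mathbb{C}$-functor produces on $\overline{B}$. Hence $\overline{\alga}$ is a connexive Heyting algebra by Theorem \ref{thm: heyting->connexive}. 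Since every element of $\overline{A}$ is regular, $\overline{\alga}$ satisfies $\llnot x \approx x$; so $\overline{\alga} \in \mathcal{CBA}$ by Lemma \ref{lem: charboolean}.(6).

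For the map $\llnot : A \to \overline{A}$: surjectivity is immediate from the definition of $\overline{A}$. One has $\llnot 0 = \lnot 1 = 1 \to 0 = 0$ and $\llnot 1 = \lnot 0 = 0 \to 0 = 1$ by Lemma \ref{lem:connexalgarithm1}.(1) and C7; the identity $\llnot(a \land b) = \llnot a \land \llnot b$ is De Morgan in $\mathbb{H}(\alga)$. For preservation of $\to$, a chain of applications of Lemma \ref{lem:arit}.(18), (17), (20) gives
\[
\llnot(a \to b) \;=\; (a \to b) \to 1 \;=\; \lnot(a \to \lnot b) \;=\; a \to \llnot b,
\]
and it then suffices to show $a \to \llnot b = \llnot a \to \llnot b$. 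This follows from the general Heyting identity $a \Rightarrow c = \llnot a \Rightarrow c$ for regular $c$: from the contrapositive $a \Rightarrow c \leq \lnot c \Rightarrow \lnot a$ one derives $(a \Rightarrow c) \land \llnot a \land \lnot c = 0$, which for regular $c$ forces $(a \Rightarrow c) \land \llnot a \leq c$, i.e.\ $a \Rightarrow c = \llnot a \Rightarrow c$; applying this with $c := \llnot b$ inside the Heyting decomposition of the connexive arrow yields the desired equality.

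The main technical obstacle is the identification $\overline{\alga} = \mathbb{C}(\overline{B})$: once one adopts the Heyting viewpoint, matching the operations reduces to showing that the set of regular elements of a Heyting algebra is closed under Heyting implication into a regular target, a single fact that also drives the homomorphism calculation for $\to$.
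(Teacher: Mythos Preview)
Your proof is correct and follows essentially the same approach as the paper: reduce to Glivenko's theorem for the Heyting reduct $\mathbb{H}(\alga)$ via the term equivalence of Theorem \ref{cor: termequiv}, then invoke Lemma \ref{lem: charboolean}. The paper's own proof is a two-line sketch citing exactly these ingredients (``standard results concerning Heyting algebras'' together with Lemma \ref{lem: charboolean} and Theorem \ref{cor: termequiv}); your explicit identification $\overline{\alga} = \mathbb{C}(\overline{B})$ and the verification that $\llnot$ preserves $\to$ via the Heyting identity $a \Rightarrow c = \llnot a \Rightarrow c$ for regular $c$ simply fill in the details the paper leaves to the reader.
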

\begin{proof}
Proving that $\overline{A}$ is closed under $\Cap$ and $\rightarrow$, and $\Cup$ is the l.u.b. in $\overline{A}$ is straightforward and is left to the reader. The remaining part of the statement follows by Lemma \ref{lem: charboolean}, Theorem \ref{cor: termequiv} and standard results concerning Heyting algebras.
\end{proof}

It is well known that, for any Heyting algebra $\mathbf{H}=\langle H,\land,\lor,\Rightarrow,0,1 \rangle$, the set $\mathrm{CC}(\mathbf{H})$ of closed and complemented (i.e. central) elements of $\mathbf{H}$ forms a sub-Heyting algebra of $\mathbf{H}$ which is a Boolean algebra. This fact together with Theorem \ref{cor: termequiv} yields the following
\begin{corollary} Let $\alga$ be a connexive Heyting algebra and let $\mathrm{CC}(\alga)$ be the set of closed and complemented elements of $\alga$. Then $\langle \mathrm{CC}(\alga),\land,\lor,\rightarrow,0,1 \rangle$ is a sub-connexive Heyting algebra of $\alga$ which is term-equivalent to a Boolean algebra.
\end{corollary}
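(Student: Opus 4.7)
The strategy is to transport the classical description of the Boolean centre of a Heyting algebra through the term equivalence of Theorem~\ref{cor: termequiv}. The first step is to observe that the connexive negation $\lnot a = a\to 0$ on $\alga$ coincides, pointwise, with the Heyting negation on the reduct $\mathbb{H}(\alga)$: by the definition of $\mathbb{H}$, $a\Rightarrow 0 = a\to(a\land 0) = a\to 0$. Since $\land,\lor,0,1$ are common to $\alga$ and $\mathbb{H}(\alga)$, the notions of \emph{closed element} ($\llnot a = a$) and of \emph{complemented element} have exactly the same extension in the two structures. Consequently $\mathrm{CC}(\alga) = \mathrm{CC}(\mathbb{H}(\alga))$, and the observation quoted by the authors just before the corollary delivers, for free, that this set carries a sub-Heyting algebra of $\mathbb{H}(\alga)$ that is a Boolean algebra.

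Next I would lift this back to the connexive side. By Theorem~\ref{thm: heyting->connexive} the connexive implication factors as $a\to b=(a\Rightarrow b)\land(\lnot a\Rightarrow\lnot b)$, i.e.\ it is a term in $\Rightarrow$, $\land$ and $\lnot$. Since $\mathrm{CC}(\alga)$ is closed under each of these operations (as established in the previous step), it is automatically closed under $\to$. Together with closure under $\land,\lor,0,1$, this shows that $\langle \mathrm{CC}(\alga),\land,\lor,\to,0,1\rangle$ is a subalgebra of $\alga$ in $\mathcal{CHA}$, i.e.\ a sub-connexive Heyting algebra.

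For the final assertion, every $a\in\mathrm{CC}(\alga)$ is closed by definition, so the identity $\llnot x\approx x$ holds throughout the subalgebra. By Lemma~\ref{lem: charboolean}, conditions (6) and (1), this places the subalgebra inside $\mathcal{CBA}$, which is term equivalent to $\mc{BA}$. The only points requiring genuine verification are the coincidence of the two negations and the resulting closure of $\mathrm{CC}(\alga)$ under $\to$; both reduce to direct unfoldings of the definitions of $\mathbb{H}$ and $\mathbb{C}$ in Theorems~\ref{thm: connexiveheyting} and \ref{thm: heyting->connexive}, so I anticipate no substantive obstacle.
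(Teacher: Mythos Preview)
Your proof is correct and follows essentially the same approach as the paper, which simply states that the corollary is obtained by combining the classical fact about $\mathrm{CC}(\mathbf{H})$ for Heyting algebras with the term equivalence of Theorem~\ref{cor: termequiv}. You have merely spelled out the details the paper leaves implicit, namely that the two negations coincide (so $\mathrm{CC}(\alga)=\mathrm{CC}(\mathbb{H}(\alga))$) and that closure under $\to$ follows from the factorisation $a\to b=(a\Rightarrow b)\land(\lnot a\Rightarrow\lnot b)$.
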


\section{Connexive Heyting Logic}\label{ciaccaelle}

As a next item on our agenda, we capitalise on the previous results to obtain a deductive equivalence between the assertional logics of $\mathcal{CHA}$ and $\mathcal{HA}$. In the process, we obtain a Hilbert-style axiomatisation of the $1$-assertional logic of $\mathcal{CHA}$ and we gain insights that allow us to parlay the standard sequent calculus for intuitionistic logic into a calculus for this logic.

\subsection{An axiomatic calculus}

While faced with the problem of axiomatising $\mathrm{L}_{\mathcal{CHA}}$, one could be tempted to give it short shrift. Indeed, Theorem \ref{cor: termequiv} guarantees that $\mathcal{CHA}$ is term equivalent to $\mathcal{HA}$, and of course we know how to axiomatise the $1$-assertional logic of $\mathcal{HA}$, i.e., intuitionistic logic $\mathrm{IL}$. Why not simply apply the appropriate translation to the axioms of $\mathrm{IL}$? This approach, however, would be wrong-headed, as pointed out by Hiz \cite{Hiz} and several other authors after him \cite{Shapiro, Humbiz}. Hence, we have to proceed in a more roundabout way.

For a start, we introduce a new logic in the language $\mathcal{L}_{CH}$, whose consequence relation is determined by a certain Hilbert-style calculus. Then we use Theorem \ref{foscolo} to show that it coincides with $\mathrm{L}_{\mathcal{CHA}}$.

\begin{definition}
Let $\mathrm{CHL}=\left\langle \mathbf{Fm}_{\mathcal{L}_{CH}},\vdash _{\mathrm{%
CHL}}\right\rangle $, where $\vdash _{\mathrm{CHL}}$ is the derivability
relation of the Hilbert system with the following postulates (letting $\varphi \Rightarrow \psi$ be a shorthand for $\varphi \rightarrow (\varphi \land \psi)$):

\begin{description}
\item[CHL1] Any set of axioms and rules for positive logic (with implication replaced by
the defined connective $\Rightarrow $);

\item[CHL2] $\lnot \left( 0\wedge \varphi \right) $;

\item[CHL3] $\lnot \varphi \Rightarrow \left( 0\rightarrow \varphi \right) $;

\item[CHL4] $(\varphi \rightarrow \psi) \Rightarrow (\varphi \Rightarrow \psi) $;

\item[CHL5] $\left( \varphi \rightarrow \psi \right) \rightarrow \left( \left(
\psi \rightarrow \chi \right) \rightarrow \left( \varphi \rightarrow \chi
\right) \right) $;

\item[CHL6] $\left( \varphi \rightarrow \psi \right) \rightarrow \lnot \left(
\varphi \rightarrow \lnot \psi \right) $;

\item[CHL7] $\varphi \Leftrightarrow \psi \vdash (\varphi \to \chi) \Rightarrow (\psi \to \chi), (\chi \to \varphi) \Rightarrow (\chi \to \psi)$;

\item[CHL8] $\varphi \wedge \psi \Rightarrow \varphi \wedge \left( \varphi
\rightarrow \psi \right) $;

\item[CHL9] $\left( \varphi \rightarrow \psi \right) \Rightarrow \left( \left(
\varphi \wedge \chi \right) \rightarrow \left( \psi \wedge \chi \right)
\right) $;

\item[CHL10] $\left( \varphi \rightarrow \psi \right) \Rightarrow \left( \left(
\varphi \vee \chi \right) \rightarrow \left( \psi \vee \chi \right) \right) $%
.
\end{description}

\end{definition}

\begin{theorem}\label{manzanarre}
$\mathrm{CHL} = \mathrm{L}_{\mathcal{CHA}}$.
\end{theorem}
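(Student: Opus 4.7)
The natural strategy is to apply Theorem~\ref{foscolo} with the system of equivalence formulas $\rho(\varphi,\psi) = \{\varphi \rightarrow \psi,\ \psi \rightarrow \varphi\}$, whose adequacy is the content of Lemma~\ref{stamazza}. That theorem yields an independent axiomatisation of $\mathrm{L}_{\mathcal{CHA}}$ by the schemata \textbf{A1}--\textbf{A6}; it therefore suffices to prove the two inclusions $\mathrm{CHL}\subseteq \mathrm{L}_{\mathcal{CHA}}$ and $\mathrm{L}_{\mathcal{CHA}}\subseteq \mathrm{CHL}$ by matching the two presentations against one another.

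\textbf{Soundness} ($\mathrm{CHL}\subseteq \mathrm{L}_{\mathcal{CHA}}$). By Theorem~\ref{lunare} and the definition of the $1$-assertional logic, it suffices to check that every axiom of $\mathrm{CHL}$ evaluates to $1$ in every $\alga \in \mathcal{CHA}$, and that CHL7 preserves this property. CHL1 is immediate from Theorem~\ref{thm: connexiveheyting}, which makes $\langle A,\land,\lor,\Rightarrow,0,1\rangle$ into a Heyting algebra. CHL2 reduces to $0 \land a = 0$ (together with $\lnot 0 = 1$, which follows from C7 in the first lemma of Subsection~3.1); CHL3 reduces to Lemma~\ref{lem:arit}(13), whereby $0 \to a = \lnot a$; CHL4 follows from the instance $z = a$ of C4, yielding $a \rightarrow b \leq a \rightarrow (a \land b) = a \Rightarrow b$. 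CHL5 and CHL6 are just C1 and C2; CHL7 is the $\rightarrow$-congruence property encoded by $\Theta(F)$ in Lemma~\ref{lem: filterscongr}; CHL8 is a rearrangement of C3; and CHL9, CHL10 are C4 and C5 respectively.

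\textbf{Completeness} ($\mathrm{L}_{\mathcal{CHA}}\subseteq \mathrm{CHL}$). Here we derive in $\mathrm{CHL}$ each of \textbf{A1}--\textbf{A6} instantiated with our specific $\rho$. \textbf{A1}, which amounts to $\vdash_\mathrm{CHL} \varphi \rightarrow \varphi$, is proved by transcribing in the calculus the algebraic derivation of C7 from C1 given in the proof that every connexive Heyting algebra is semi-Heyting; this hinges on first establishing as derived CHL-theorems the analogues of Lemma~\ref{lem:connexalgarithm1}(1) and (5), namely $\varphi \dashv\vdash_\mathrm{CHL} 1 \rightarrow \varphi$ and $\vdash_\mathrm{CHL} \varphi \rightarrow 1$, each obtained by combining CHL1 (positive logic on $\Rightarrow$) with CHL4 and CHL5. \textbf{A2} is a form of modus ponens: from $\varphi$ and $\varphi \rightarrow \psi$ one extracts $\varphi \Rightarrow \psi$ via CHL4, whence $\psi$ by positive-logic modus ponens supplied by CHL1; the second premise $\psi \rightarrow \varphi$ of $\rho(\varphi,\psi)$ is not needed. \textbf{A3} is immediate because $\rho$ is symmetric as a set. \textbf{A4} (congruence for each connective) is provided by CHL9 and CHL10 for $\land$ and $\lor$ (together with the standard positive-logic congruence for $\Rightarrow$), and by CHL7 for $\rightarrow$. \textbf{A5} ($\varphi \dashv\vdash \{\varphi \rightarrow 1,\ 1 \rightarrow \varphi\}$) is an immediate consequence of the auxiliaries used for \textbf{A1}. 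Finally, for \textbf{A6} one must derive $\vdash_\mathrm{CHL} \rho(\varphi,\psi)$ for each defining identity of $\mathcal{CHA}$: C1 and C2 are delivered by CHL5 and CHL6 together with \textbf{A1}, used to supply the converse $\rho$-component; C3 follows from CHL8 with \textbf{A1}; and C4, C5 correspond to CHL9 and CHL10.

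\textbf{Main obstacle.} The heart of the argument lies on the completeness side, and specifically in internalising as $\mathrm{CHL}$-derivations the $\mathcal{CHA}$-arithmetic of Lemmas~\ref{lem:connexalgarithm1} and~\ref{lem:arit}. The pivotal bottleneck is the derivation of \textbf{A1} together with its auxiliaries $\varphi \dashv\vdash 1 \rightarrow \varphi$ and $\vdash \varphi \rightarrow 1$, since these are precisely the point at which the connexive arrow $\rightarrow$ must be reasoned about in its own right, rather than as a mere positive-logic artefact of $\Rightarrow$. Once these auxiliary derivations are in place, every remaining step in the completeness proof reduces to a routine combination of CHL1 (positive logic on $\Rightarrow$) with one or more of the connexive-specific postulates CHL4--CHL10.
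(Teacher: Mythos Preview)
Your overall strategy—applying Theorem~\ref{foscolo} and checking \textbf{A1}--\textbf{A6}—is correct, but your execution contains one genuine error and one substantive omission, and you have chosen a harder road than the paper.

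\textbf{The error.} You claim $\vdash_{\mathrm{CHL}}\varphi\rightarrow 1$ as an auxiliary (your transcription of Lemma~\ref{lem:connexalgarithm1}(5)). This is false: in $\mathbf{KO}_3$ from Example~\ref{treelementi} one has $0\rightarrow 1=0\neq 1$, so $\varphi\rightarrow 1\approx 1$ fails in $\mathcal{CHA}$ and hence $\not\vdash_{\mathrm{L}_{\mathcal{CHA}}}\varphi\rightarrow 1$. Lemma~\ref{lem:connexalgarithm1}(5) says only $a\leq a\rightarrow 1$, which internalises as the \emph{rule} $\varphi\vdash\varphi\rightarrow 1$, not as a theorem. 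Your derivation of \textbf{A5} therefore needs repair: the forward direction $\varphi\vdash\varphi\rightarrow 1$ must be obtained differently (e.g.\ from $\varphi$ derive $1\Leftrightarrow\varphi$, apply CHL7 with $\chi:=1$ to get $(1\rightarrow 1)\Rightarrow(\varphi\rightarrow 1)$, and discharge via the theorem $1\rightarrow 1$). Your \textbf{A1} argument is salvageable without this auxiliary at all.

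\textbf{The omission.} In \textbf{A6} you handle only C1--C5, but Definition~\ref{putacaso} also includes the bounded-distributive-lattice axioms. With your $\rho=\{\varphi\rightarrow\psi,\psi\rightarrow\varphi\}$ these become $\rightarrow$-equivalences (not $\Rightarrow$-equivalences), so CHL1 alone does not dispatch them; you must argue that whenever $\vdash\alpha\Leftrightarrow\beta$ in positive logic, CHL7 together with your \textbf{A1} yields $\vdash\alpha\rightarrow\beta$. The $0$-axioms moreover require CHL2 and CHL3, which you never invoke.

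\textbf{Comparison with the paper.} The paper sidesteps all of this by choosing the \emph{other} system of equivalence formulas, $\rho(\varphi,\psi)=\{\varphi\Rightarrow\psi,\psi\Rightarrow\varphi\}$, which also witnesses $1$-regularity (since $a\Rightarrow b=1=b\Rightarrow a$ iff $a=b$ in any Heyting-algebra reduct). With this choice \textbf{A1}, \textbf{A2}, \textbf{A3}, \textbf{A5} and the lattice part of \textbf{A6} become pure positive logic (CHL1), leaving only \textbf{A4} for $\rightarrow$ (handled by CHL7) and \textbf{A6} for C1--C5 and the $0$-axioms (handled by CHL5--CHL10 and CHL2--CHL3). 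Your $\rightarrow$-based $\rho$ is legitimate, but it forces you to bootstrap $\vdash\varphi\rightarrow\varphi$ and to upgrade every $\Leftrightarrow$-equivalence to a $\rightarrow$-equivalence via CHL7; the paper's choice makes the ``main obstacle'' you identify simply disappear.
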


\begin{proof}
It is easy to see that the axioms CLH1-CLH10 evaluate at $1$ in any connexive Heyting algebra, and that the rule $\varphi, \varphi \Rightarrow \psi \vdash \psi$ preserves this property. For the converse direction, we resort to Theorem \ref{foscolo}. First, observe that the set $\{ \varphi \Rightarrow \psi, \psi \Rightarrow \varphi \}$ witnesses $1$-regularity for $\mathcal{CHA}$ and is a set of equivalence formulas for $\mathrm{L}_{\mathcal{CHA}}$ and $\mathcal{CHA}$. Thus, all we have to show is that the formulas and rules A1-A6 in Theorem \ref{foscolo} are derivable in $\mathrm{CHL}$.

As regards A1, A2, A3, and A5, they can be proved by means of the postulates of positive logic, hence of CLH1. The same can be said for A4, except for the rule
\[
\varphi_1 \Rightarrow \varphi_2, \psi_1 \Rightarrow \psi_2, \varphi_2 \Rightarrow \varphi_1, \psi_2 \Rightarrow \psi_1 \vdash \{ (\varphi_1 \rightarrow \psi_1) \Rightarrow (\varphi_2 \rightarrow \psi_2), (\varphi_2 \rightarrow \psi_2) \Rightarrow (\varphi_1 \rightarrow \psi_1) \},
\]
which can be proved by repeatedly applying CHL7. As for A6, if $\eta \approx \lambda$ is any of the identities C1-C5 in Definition \ref{putacaso}, $\rho (\eta, \lambda)$ they can be easily proved with the aid of CHL5, CHL6, CHL7-8, CHL9, and CHL10 respectively, as well as principles of positive logic. This leaves us with all $\rho (\eta, \lambda)$, where $\eta \approx \lambda$ is an identity axiomatising bounded distributive lattices. Again, CHL1 suffices to establish all the required theorems, except for $0 \land \varphi \Rightarrow 0$ and its converse $0 \Rightarrow 0 \land \varphi$. The former result follows from CHL2 and CHL4, whereas the latter is a consequence of CHL2 and CHL3.  
\end{proof}

Recalling Definition \ref{pincovalenza}, now we have all we need to prove the following

\begin{theorem}
$\mathrm{CHL}$ is deductively equivalent to intuitionistic logic $\mathrm{IL}
$. The equivalence is implemented by the translations $\tau ,\rho $ that leave all the connectives unaltered except for:%
\begin{eqnarray*}
 \varphi \rightarrow^{\tau} \psi &=&\left( \varphi \Rightarrow
\psi \right) \wedge \left( \lnot \varphi \Rightarrow \lnot \psi \right) 
\text{;} \\
 \varphi \Rightarrow^{\rho} \psi &=&\varphi \rightarrow \left(
\varphi \wedge \psi \right) \text{.}
\end{eqnarray*}
\end{theorem}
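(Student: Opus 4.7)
My plan is to leverage the term equivalence $\mathcal{CHA} \sim \mathcal{HA}$ established in Theorem~\ref{cor: termequiv}, together with the identification $\mathrm{CHL} = \mathrm{L}_{\mathcal{CHA}}$ from Theorem~\ref{manzanarre} and the well-known fact that $\mathrm{IL} = \mathrm{L}_{\mathcal{HA}}$. The general principle is that a term equivalence between two $1$-regular varieties induces a deductive equivalence between their assertional logics, implemented by the very terms that witness the term equivalence; here $\tau$ is the connective translation from $\mathbb{C}$ of Theorem~\ref{thm: heyting->connexive} and $\rho$ is the translation from $\mathbb{H}$ of Theorem~\ref{thm: connexiveheyting}. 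A preliminary sanity check: both $\tau$ and $\rho$ are translations in our technical sense, since they fix variables and replace each primitive connective with a compound built from primitive connectives of the same language.

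The first and main step is to establish clause (1) of Definition~\ref{pincovalenza}, i.e., $\Gamma \vdash_{\mathrm{CHL}} \varphi$ iff $\tau(\Gamma) \vdash_{\mathrm{IL}} \tau(\varphi)$. By Theorem~\ref{manzanarre}, the left-hand side is equivalent to $\{\psi \approx 1 : \psi \in \Gamma\} \vDash_{\mathcal{CHA}} \varphi \approx 1$, while the right-hand side is equivalent to $\{\tau(\psi) \approx 1 : \psi \in \Gamma\} \vDash_{\mathcal{HA}} \tau(\varphi) \approx 1$. The crucial observation is that if $\alga \in \mathcal{CHA}$, $\mathbb{H}(\alga) \in \mathcal{HA}$ has the same universe and the same lattice operations and constants, while the Heyting arrow $\Rightarrow$ is by definition interpreted as $a \Rightarrow b := a \rightarrow (a \land b)$. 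Thus for any assignment $v$ into $\alga$, a structural induction on $\varphi$ (using the definition of $\tau$ precisely at the connexive-arrow step, where $v(\varphi \to \psi) = v((\varphi \Rightarrow \psi) \land (\lnot \varphi \Rightarrow \lnot \psi))$ by P2 in the proof of Theorem~\ref{cor: termequiv}) yields $v^{\alga}(\varphi) = v^{\mathbb{H}(\alga)}(\tau(\varphi))$. Since $\mathbb{H}$ is a bijection between $\mathcal{CHA}$ and $\mathcal{HA}$, the two equational consequences coincide, and clause (1) follows.

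The second step is clause (2): $\tau(\rho(\varphi)) \dashv\vdash_{\mathrm{IL}} \varphi$. By induction on $\varphi$, the only non-trivial case is $\varphi = \psi \Rightarrow \chi$, where we must show that
\[
(\psi \Rightarrow (\psi \land \chi)) \land (\lnot \psi \Rightarrow \lnot(\psi \land \chi)) \quad \dashv\vdash_{\mathrm{IL}} \quad \psi \Rightarrow \chi.
\]
This is immediate in any Heyting algebra: the first conjunct equals $\psi \Rightarrow \chi$ because $\psi \land \chi \leq \chi$ and $\psi \land (\psi \Rightarrow \chi) = \psi \land \chi$, while the second conjunct equals $1$ because $\lnot \psi \leq \lnot(\psi \land \chi)$ by antitonicity of negation. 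Equivalently, this amounts to the $\mathbb{H} \circ \mathbb{C} = \mathrm{id}$ direction of Theorem~\ref{cor: termequiv}, lifted from algebras to formulas.

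I do not expect any serious obstacle: the real work has already been absorbed into Theorem~\ref{cor: termequiv} and Theorem~\ref{manzanarre}. The only care needed is to make the compositionality argument for $\tau$ (and $\rho$) completely explicit, so that the interpretation of a $\mathcal{CHA}$-term in $\alga$ coincides with the interpretation of its $\tau$-translate in $\mathbb{H}(\alga)$, and symmetrically for $\rho$. Once that is in place, the deductive equivalence is just the assertional shadow of the algebraic term equivalence.
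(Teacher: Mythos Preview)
Your proposal is correct and follows essentially the same route as the paper: clause~(1) is obtained by chaining Theorem~\ref{manzanarre}, the term equivalence of Theorem~\ref{cor: termequiv}, and the identification $\mathrm{IL}=\mathrm{L}_{\mathcal{HA}}$, while clause~(2) reduces to the arrow case. The only noteworthy difference is that for clause~(2) you establish the termwise identity $(\psi\Rightarrow(\psi\wedge\chi))\wedge(\lnot\psi\Rightarrow\lnot(\psi\wedge\chi))=\psi\Rightarrow\chi$ in every Heyting algebra, whereas the paper verifies merely that each side evaluates to $1$ iff the other does; your argument is slightly stronger but both suffice.
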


\begin{proof}
According to Definition \ref{pincovalenza}, we must show that for all $\Gamma \cup \left\{ \varphi \right\} \subseteq Fm_{%
\mathcal{L}_{CH}}$,

\begin{enumerate}
\item $\Gamma \vdash _{\mathrm{CLH}}\varphi $ iff $\tau \left( \Gamma
\right) \vdash _{\mathrm{IL}}\tau \left( \varphi \right) $;

\item $\tau \left( \rho \left( \varphi \right) \right) \dashv \vdash _{%
\mathrm{IL}}\varphi $.
\end{enumerate}
As regards (1), we have that:
\[
\begin{array}
[c]{llll}%
\Gamma\vdash_{\mathrm{CHL}}\varphi & \text{iff} & \left\{  \gamma
\approx1:\gamma\in\Gamma\right\}  \vdash_{\mathcal{CHA}}\varphi\approx1 &
\text{Thm. \ref{manzanarre}}\\
& \text{iff} & \left\{  \tau\left(  \gamma\right)  \approx1:\gamma\in
\Gamma\right\}  \vdash_{\mathcal{HA}}\tau\left(  \varphi\right)  \approx1 &
\text{Thm. \ref{cor: termequiv}}\\
& \text{iff} & \tau\left(  \Gamma\right)  \vdash_{\mathrm{IL}}\tau\left(
\varphi\right).   &
\end{array}
\]
For (2), it suffices to show that $\varphi \Rightarrow \psi$ is intuitionistically interderivable with $(\varphi \Rightarrow \varphi \land \psi) \land (\lnot \varphi \Rightarrow \lnot (\varphi \land \psi))$. We give an algebraic argument to that effect. Suppose $\alga$ is a Heyting algebra and $a,b \in A$. If $a \Rightarrow b = 1$, then $a \leq b$ and thus
\[
(a \Rightarrow a \land b) \land (\lnot a \Rightarrow \lnot (a \land b)) = (a \Rightarrow a) \land (\lnot a \Rightarrow \lnot a) = 1.
\]
Conversely, if $(a \Rightarrow a \land b) \land (\lnot a \Rightarrow \lnot (a \land b)) = 1$, then a fortiori $a \Rightarrow a \land b = 1$, hence $a \leq a \land b \leq b$, whereby $a \Rightarrow b = 1$.
\end{proof}

\subsection{Gentzen-style proof theory}

The deductive equivalence between $\mathrm{CHL}$ and $\mathrm{IL}$ certainly invites to piggyback on the existing proof systems for intuitionistic logic in order to obtain analytic calculi for our new logic. Although this may be a natural option, it need not be a straightforward, let alone a purely algorithmic, exercise. It is well-known that the most relevant properties of Gentzen calculi, like cut elimination, are by no means to be considered as intrinsic properties of a logic but are heavily sensitive to the particular presentation one chooses to adopt. 

In what follows, we introduce a sequent calculus $\mathtt{CHC}$ which is Gentzen algebraisable with $\mathcal{CHA}$ as equivalent variety semantics. As it will be clear below, $\mathtt{CHC}$ is virtually identical to the standard intuitionistic calculus $\mathtt{LJ}$, except for a different rule for introducing implication on the right, and an additional rule for introducing implication on the left. It is essentially different from the calculus for semi-intuitionistic logic (the logic corresponding to semi-Heyting algebras) given in \cite{Cassequent}, whose operational rules must be appropriately supplemented so as to guarantee the extra deductive power needed to prove the connexive axioms.\\

Hereafter, we retain our practice of denoting formulas in $Fm_{\mathcal{L}_{CH}}$ by $\varphi,\psi,\dots$, but also by $\alpha,\beta,\dots$, especially (but not only) when they are used as \emph{metaformulas} in rule schemata. Finite or empty sets of $\mathcal{L}_{CH}$-formulas are denoted by $\Gamma,\Delta,\dots$. We set $\neg\varphi:=\varphi\rightarrow 0$ , for any formula $\varphi$. A sequent is an ordered pair $\langle \Gamma,\Pi \rangle$ of finite sets of formulas where $\Pi$, called \emph{stoup}, is either empty or a singleton. As usual, a sequent $\langle \Gamma,\Pi \rangle$ is noted $\Gamma\rto\Pi$, and for any formulas $\varphi,\psi$, $\varphi\lto\rto\psi$ is short for the set $\{\varphi\rto\psi,\psi\rto\varphi\}$. $Seq_{\mc{L}_{CH}}$ will refer to the set of all sequents. If $\Gamma$ is a finite set of formulas, $\Gamma^{\land}$ stands for the conjunction of all formulas in $\Gamma$, associated to the left, if $\Gamma\neq\emptyset$, and $1$ otherwise. Similarly, if $\Pi$ is a stoup, $\Pi^{\lor}$ is the formula $\vrp$, if $\Pi^{\lor}=\{\vrp\}$, and $0$ otherwise.

The notions of an inference rule and a proof (or derivation) are the customary ones. If there exists a proof of $s$ from $S$, where $S\cup{s}\subseteq Seq_{\mc{L}}$, we will express this fact by $S\vdash_{\chc}s$. Observe that $\vdash_{\chc}$ is an abstract consequence relation according to Definition \ref{cioccoblocco}.

\begin{tcolorbox}[colback=white]
\begin{center}
\textbf{Axioms}\\
\vspace{4mm}
\AxiomC{}\id
\UnaryInfC{$\alpha\rto\alpha$}
\DisplayProof\qquad
\AxiomC{}\0
\UnaryInfC{$0\rto{}$}
\DisplayProof\qquad
\AxiomC{}\one
\UnaryInfC{${}\rto 1$}
\DisplayProof
\end{center}
\vspace{2mm}
\begin{center}
\textbf{Structural rules}\\
\vspace{4mm}
\AxiomC{$\Gamma\rto\Pi$}\wel
\UnaryInfC{$\alpha,\Gamma\rto\Pi$}
\DisplayProof\qquad
\AxiomC{$\Gamma\rto$}\wer
\UnaryInfC{$\Gamma\rto\alpha$}
\DisplayProof\qquad
\AxiomC{$\Gamma\rto\alpha$}
\AxiomC{$\alpha,\Delta\rto\Pi$}\cut
\BinaryInfC{$\Gamma,\Delta\rto\Pi$}
\DisplayProof
\end{center}
\vspace{2mm}
\begin{center}
\textbf{Operational Rules}\\
\vspace{4mm}
\AxiomC{$\alpha,\Gamma\rto\Pi$}
\UnaryInfC{$\alpha\land\beta,\Gamma\rto\Pi$}
\DisplayProof
\AxiomC{$\beta,\Gamma,\rto\Pi$}\landl
\UnaryInfC{$\alpha\land\beta,\Gamma\rto\Pi$}
\DisplayProof\qquad
\AxiomC{$\Gamma\rto\alpha$}
\AxiomC{$\Gamma\rto\beta$}\landr
\BinaryInfC{$\Gamma\rto\alpha\land\beta$}
\DisplayProof\\
\vspace{4mm}
\AxiomC{$\Gamma\rto\alpha$}
\UnaryInfC{$\Gamma\rto\alpha\lor\beta$}
\DisplayProof
\AxiomC{$\Gamma\rto\beta$}\lorr
\UnaryInfC{$\Gamma\rto\alpha\lor\beta$}
\DisplayProof\qquad
\AxiomC{$\alpha,\Gamma\rto\Pi$}
\AxiomC{$\beta,\Gamma\rto\Pi$}\lorl
\BinaryInfC{$\alpha\lor\beta,\Gamma\rto\Pi$}
\DisplayProof\\
\vspace{4mm}
\AxiomC{$\Gamma\rto\alpha$}
\AxiomC{$\Delta,\beta\rto\Pi$}\implla
\BinaryInfC{$\Delta,\Gamma,\alpha\rightarrow\beta\rto\Pi$}
\DisplayProof\qquad
\AxiomC{$\neg\alpha,\Gamma\rto\beta$}
\AxiomC{$\Delta,\alpha,\beta\rto{}$}\impllb
\BinaryInfC{$\Gamma,\Delta,\alpha\rightarrow\beta\rto{}$}
\DisplayProof\\
\vspace{4mm}
\AxiomC{$\alpha,\Gamma\rto\beta$}
\AxiomC{$\Delta,\lnot\alpha,\beta\rto{}$}\implr
\BinaryInfC{$\Gamma,\Delta\rto\alpha\rightarrow\beta$}
\DisplayProof\qquad
\end{center}
\end{tcolorbox}
It is easily seen by means of a routine argument that the inference rules ($\land$-l) can be equivalently replaced by the single rule
\begin{center}
\AxiomC{$\alpha,\beta,\Gamma\rto\Pi$}
\UnaryInfC{$\alpha\land\beta,\Gamma\rto\Pi$}
\DisplayProof
\end{center}
Therefore, in what follows, by ($\land$-l) we will mean an application of either ($\land$-l), or the above rule. We observe that some of the rules for connexive implication are neither separate, nor explicit in the sense of \cite{Wanpts}: They exhibit connectives other than the connexive arrow (i.e., the constant $0$), and they exhibit the arrow in their premiss sequents as well as in their conclusion sequents.

\begin{lemma}\label{gommapiuma}The following inference rules are derivable in $\chc$:
\vspace{4mm}
\begin{center}
\em{\AxiomC{$\Gamma\rto\alpha$}\negl
\UnaryInfC{$\Gamma,\lnot\alpha\rto{}$}
\DisplayProof\qquad
\AxiomC{$\Gamma,\alpha\rto{}$}\negr
\UnaryInfC{$\Gamma\rto\lnot\alpha$}
\DisplayProof\quad
\vspace{4mm}
{\AxiomC{$\Gamma,\alpha\rto{}$}
\AxiomC{$\Delta\rto\beta$}\em{\impllc}
\BinaryInfC{$\alpha\rightarrow\beta,\Gamma,\Delta\rto{}$}
\DisplayProof}\qquad
\AxiomC{$\alpha,\Gamma\rto\beta$}
\AxiomC{$\Delta,\neg\alpha,\beta\rto{}$}\em{\implld}
\BinaryInfC{$\Gamma,\Delta,\alpha\rightarrow\neg\beta\rto{}$}
\DisplayProof
}
\end{center}
\end{lemma}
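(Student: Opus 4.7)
The proof is an exercise in building four small derivations using the primitive rules together (after the first two) with each other. Since $\neg\alpha$ is defined as $\alpha \to 0$, the negation rules should fall out of the implication rules applied with $\beta := 0$, and the zero axiom $(0)$ will supply the missing pieces where needed. The implication rules $(\to$-l(c)$)$ and $(\to$-l(d)$)$ will then be obtained as ``weakened'' or ``negation-pushed'' variants of the primitive $(\to$-l(b)$)$.

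For $(\neg$-l$)$, I would apply $(\to$-l(a)$)$ to the given $\Gamma \rto \alpha$ together with the axiom $0 \rto {}$, choosing $\beta := 0$ and empty side context; the conclusion is $\Gamma, \alpha \to 0 \rto {}$, which is exactly $\Gamma, \neg\alpha \rto {}$. For $(\neg$-r$)$, I would instantiate $(\to$-r$)$ with $\beta := 0$ and empty right-hand context: the left premise $\alpha, \Gamma \rto 0$ is obtained from the assumption $\Gamma, \alpha \rto {}$ by $(w$-r$)$, while the right premise $\neg\alpha, 0 \rto {}$ is obtained from the axiom $0 \rto {}$ by $(w$-l$)$. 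The conclusion is then $\Gamma \rto \alpha \to 0$, as required.

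For $(\to$-l(c)$)$, the idea is that the conclusion is a direct instance of $(\to$-l(b)$)$ once both premises are weakened appropriately. Concretely, from $\Delta \rto \beta$ I weaken on the left by $\neg\alpha$ to get $\neg\alpha, \Delta \rto \beta$, and from $\Gamma, \alpha \rto {}$ I weaken on the left by $\beta$ to get $\Gamma, \alpha, \beta \rto {}$. Plugging these into $(\to$-l(b)$)$ (with the roles of the two side contexts being $\Delta$ and $\Gamma$, respectively) yields $\Delta, \Gamma, \alpha \to \beta \rto {}$, which is our target up to reordering.

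For $(\to$-l(d)$)$ I would use the derived rules $(\neg$-l$)$ and $(\neg$-r$)$ just established to massage each premise so that $(\to$-l(b)$)$ can be applied with $\beta$ replaced by $\neg\beta$. From the first premise $\alpha, \Gamma \rto \beta$, applying $(\neg$-l$)$ produces $\alpha, \Gamma, \neg\beta \rto {}$; from the second premise $\Delta, \neg\alpha, \beta \rto {}$, applying $(\neg$-r$)$ yields $\Delta, \neg\alpha \rto \neg\beta$. These two now match the schema of $(\to$-l(b)$)$ with the implicand taken to be $\neg\beta$, so a single application of that rule gives $\Delta, \Gamma, \alpha \to \neg\beta \rto {}$. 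None of the four steps presents a serious obstacle; the only thing to keep an eye on is that the order in which the derived rules are invoked for $(\to$-l(d)$)$ really does let us reach the schema of a primitive rule, so I would prove the four items in the order $(\neg$-l$), (\neg$-r$), (\to$-l(c)$), (\to$-l(d)$)$ to make the dependencies go through cleanly.
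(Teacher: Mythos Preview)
Your proposal is correct and matches the paper's approach essentially step for step: the paper derives $(\neg$-r$)$ via $(w$-r$)$, the axiom $(0)$, $(w$-l$)$, and $(\to$-r$)$, indicates that $(\neg$-l$)$ comes from $(\to$-l(a$))$ and $(0)$, that $(\to$-l(c$))$ comes from $(w$-l$)$ and $(\to$-l(b$))$, and derives $(\to$-l(d$))$ exactly as you do by applying $(\neg$-r$)$ and $(\neg$-l$)$ to the two premises and then $(\to$-l(b$))$.
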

\begin{proof}We confine ourselves to prove ($\neg$-r) and ($\rightarrow$-l(d)) leaving the remaining inference rules to the reader. Concerning ($\neg$-r), we have
\begin{prooftree}
\AxiomC{$\Gamma,\alpha\rto{}$}\wer
\UnaryInfC{$\Gamma,\alpha\rto 0$}
\AxiomC{}\0
\UnaryInfC{$0\rto{}$}\wel
\UnaryInfC{$0,\lnot\alpha\rto{}$}\implr
\BinaryInfC{$\Gamma\rto\alpha\rightarrow 0$}\eqr
\UnaryInfC{$\Gamma\rto\neg\alpha$}
\end{prooftree}
Furthermore, one can easily check that ($\neg$-l) can be proven by means of straightforward applications of ($\rightarrow$-l(a)) and ($0$), while ($\rightarrow$-l(c)) can be derived by applying (w-l) and ($\rightarrow$-l(b)). Finally, concerning ($\rightarrow$-l(d)), let us consider the following derivation:
\begin{prooftree}
\AxiomC{$\Delta,\neg\alpha,\beta\rto{}$}\negr
\UnaryInfC{$\Delta,\neg\alpha\rto\neg\beta$}
\AxiomC{$\Gamma,\alpha\rto\beta$}\negl
\UnaryInfC{$\Gamma,\alpha,\lnot\beta\rto{}$}\impllb
\BinaryInfC{$\Gamma,\Delta,\alpha\rightarrow\neg\beta\rto$}
\end{prooftree}
\end{proof}

\begin{lemma}\label{lem:c3c4gentzen}The following hold, for any $\varphi,\psi,\chi,\xi\in Fm_{\mc L_{CH}}$:
\begin{enumerate}
\item $\vchc(\varphi\rightarrow\psi)\land\varphi\lto\rto\varphi\land\psi$;
\item $\vchc\varphi\rightarrow\psi\rto(\varphi\land\chi)\rightarrow(\psi\land\chi)$;
\item $\vchc\varphi\rightarrow\psi\rto(\varphi\lor\chi)\rightarrow(\psi\lor\chi)$.
\end{enumerate}
\end{lemma}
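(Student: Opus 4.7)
The plan is to give three explicit sequent calculus derivations, one for each item. All three hinge on the introduction rules for connexive implication; the nontrivial direction, in each case, is the one producing $\varphi\rightarrow\psi$ on the right (or within a larger formula), which forces us to apply ($\rightarrow$-r) and to use the non-standard left rule ($\rightarrow$-l(b)) in its second premise.

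\textbf{Item (1).} For the forward direction $(\varphi\rightarrow\psi)\land\varphi\rto\varphi\land\psi$, I would start from the identity axioms $\varphi\rto\varphi$ and $\psi\rto\psi$, apply ($\rightarrow$-l(a)) to obtain $\varphi,\varphi\rightarrow\psi\rto\psi$, pair it with $\varphi,\varphi\rightarrow\psi\rto\varphi$ (id plus weakening) via ($\land$-r), and finish with ($\land$-l). For the backward direction $\varphi\land\psi\rto\varphi\rightarrow\psi$, I would apply ($\rightarrow$-r) with $\alpha=\varphi$, $\beta=\psi$, and both $\Gamma,\Delta$ equal to $\{\varphi\land\psi\}$; the first premise $\varphi,\varphi\land\psi\rto\psi$ comes from $\psi\rto\psi$ by ($\land$-l) and weakening, while the second premise $\varphi\land\psi,\lnot\varphi,\psi\rto$ comes from $\varphi\rto\varphi$ by ($\lnot$-l), ($\land$-l), and weakening.

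\textbf{Items (2) and (3).} For each, I would use ($\rightarrow$-r) with $\alpha$ the conjunction (resp.\ disjunction) in the antecedent of the target arrow and $\beta$ the corresponding conjunction (resp.\ disjunction) in the consequent, with both sides of the partition receiving a copy of $\varphi\rightarrow\psi$. The first premise is routine: for (2) use ($\land$-l), ($\rightarrow$-l(a)), and ($\land$-r); for (3) use ($\lor$-l), ($\rightarrow$-l(a)), and ($\lor$-r). The second premise is the crux. In (2), after ($\land$-l) on $\psi\land\chi$, I would apply ($\rightarrow$-l(b)) to $\varphi\rightarrow\psi$: its left premise $\lnot\varphi,\psi\rto\psi$ is an identity up to weakening, and its right premise $\lnot(\varphi\land\chi),\chi,\varphi,\psi\rto$ follows by forming $\varphi\land\chi$ via ($\land$-r) from two identity axioms and discharging with ($\lnot$-l) and weakening. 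In (3), after ($\lor$-l) on $\psi\lor\chi$, the $\chi$-branch uses ($\lor$-r) and ($\lnot$-l) directly; the $\psi$-branch again calls ($\rightarrow$-l(b)) on $\varphi\rightarrow\psi$, with right premise $\lnot(\varphi\lor\chi),\varphi,\psi\rto$ obtained from $\varphi\rto\varphi\lor\chi$ (by ($\lor$-r)) followed by ($\lnot$-l) and weakening.

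\textbf{Main obstacle.} The decisive step is the second premise of ($\rightarrow$-r) in items (2) and (3): semantically, it amounts to proving $\lnot(\varphi\land\chi)=\lnot(\psi\land\chi)$ (resp.\ $\lnot(\varphi\lor\chi)=\lnot(\psi\lor\chi)$) from $\varphi\rightarrow\psi$, which at the algebraic level reflects the fact that $a\rightarrow b=1$ in a connexive Heyting algebra forces $\lnot a=\lnot b$ (Theorem \ref{thm:ndcd-rdr}). Proof-theoretically, this is exactly what the non-standard rule ($\rightarrow$-l(b)) is built for: it lets us use $\varphi\rightarrow\psi$ to the left when the stoup is empty, by threading $\lnot\varphi$ into one branch and $\varphi$ into the other. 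No other rule in $\mathtt{CHC}$ can bridge this gap, so the real content of the lemma is that ($\rightarrow$-l(b)) is strong enough to replay, at the sequent level, the negation-preservation encoded in C4 and C5.
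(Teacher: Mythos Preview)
Your proposal is correct and follows essentially the same route as the paper. The only cosmetic differences are that the paper packages the second-premise arguments for items (2) and (3) via the derived rule ($\rightarrow$-l(c)) rather than invoking ($\rightarrow$-l(b)) directly (these are interderivable by weakening, as noted in Lemma~\ref{gommapiuma}), and that for the backward direction of item (1) the paper explicitly pairs $\varphi\land\psi\rto\varphi\rightarrow\psi$ with the trivial $\varphi\land\psi\rto\varphi$ via ($\land$-r), which you leave implicit.
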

\begin{proof}
Concerning (1), we have
\begin{prooftree}
\AxiomC{$\varphi\rto\varphi$}\landl
\UnaryInfC{$(\varphi\rightarrow\psi)\land\varphi\rto\varphi$}
\AxiomC{}\id
\UnaryInfC{$\varphi\rto\varphi$}
\AxiomC{}\id
\UnaryInfC{$\psi\rto\psi$}\implla
\BinaryInfC{$(\varphi\rightarrow\psi),\varphi\rto\psi$}\landl
\UnaryInfC{$(\varphi\rightarrow\psi)\land\varphi\rto\psi$}\landr
\BinaryInfC{$(\varphi\rightarrow\psi)\land\varphi\rto\varphi\land\psi$}
\end{prooftree}
Moreover, we have also
\begin{prooftree}
\AxiomC{}\id
\UnaryInfC{$\varphi\rto\varphi$}\landl
\UnaryInfC{$\varphi\land\psi\rto\varphi$}
\AxiomC{}\id
\UnaryInfC{$\psi\rto\psi$}\wel
\UnaryInfC{$\varphi,\psi \rto\psi$}
\AxiomC{}\id
\UnaryInfC{$\varphi\rto\varphi$}\negl
\UnaryInfC{$\varphi,\lnot\varphi\rto{}$}\wel
\UnaryInfC{$\varphi,\psi,\lnot\varphi\rto$}\implr
\BinaryInfC{$\varphi,\psi\rto\varphi\rightarrow\psi$}\landl
\UnaryInfC{$\varphi\land\psi\rto\varphi\rightarrow\psi$}\landr
\BinaryInfC{$\varphi\land\psi\rto(\varphi\rightarrow\psi)\land\varphi$}
\end{prooftree}
As regards (2), we have:
\begin{prooftree}
\AxiomC{}\id
\UnaryInfC{$\varphi\rto\varphi$}
\AxiomC{}\id
\UnaryInfC{$\psi\rto\psi$}\implla
\BinaryInfC{$\varphi\rightarrow\psi,\varphi\rto\psi$}\landl
\UnaryInfC{$\varphi\rightarrow\psi,\varphi\land\chi\rto\psi$}
\AxiomC{}\id
\UnaryInfC{$\chi\rto\chi$}\wel
\UnaryInfC{$\vrp\rightarrow\psi,\chi\rto\chi$}\landl
\UnaryInfC{$\vrp\rightarrow\psi,\vrp\land\chi\rto\chi$}\landr
\BinaryInfC{$\vrp\rightarrow\psi,\vrp\land\chi\rto\psi\land\chi$}
\AxiomC{$\mc D_{1}$}
\UnaryInfC{$\vrp\rightarrow\psi,\psi\land\chi,\lnot(\vrp\land\chi)\rto{}$}\implr
\BinaryInfC{$\vrp\rightarrow\psi\rto(\vrp\land\chi)\rightarrow(\psi\land\chi)$}
\end{prooftree}
where $\mc D_1$ has the following form:
\begin{prooftree}
\AxiomC{}\id
\UnaryInfC{$\vrp\rto\vrp$}\wel
\UnaryInfC{$\vrp,\psi\land\chi\rto\vrp$}
\AxiomC{}\id
\UnaryInfC{$\chi\rto\chi$}\wel
\UnaryInfC{$\vrp,\chi\rto\chi$}\landl
\UnaryInfC{$\vrp,\psi\land\chi\rto\chi$}\landr
\BinaryInfC{$\vrp,\psi\land\chi\rto\vrp\land\chi$}\negl
\UnaryInfC{$\vrp,\psi\land\chi,\neg(\vrp\land\chi)\rto{}$}
\AxiomC{}\id
\UnaryInfC{$\psi\rto\psi$}\landl
\UnaryInfC{$\psi\land\chi\rto\psi$}\impllc
\BinaryInfC{$\vrp\rightarrow\psi,\psi\land\chi,\neg(\vrp\land\chi)\rto{}$}
\end{prooftree}
Concerning (3), first let us consider the following proof $\mc D$
\begin{prooftree}
\AxiomC{}\id
\UnaryInfC{$\vrp\rto\vrp$}
\AxiomC{}\id
\UnaryInfC{$\psi\rto\psi$}\implla
\BinaryInfC{$\vrp\rightarrow\psi,\vrp\rto\psi$}\lorr
\UnaryInfC{$\vrp\rightarrow\psi,\vrp\rto\psi\lor\chi$}
\AxiomC{}\id
\UnaryInfC{$\chi\rto\chi$}\lorr
\UnaryInfC{$\chi\rto\psi\lor\chi$}\wel
\UnaryInfC{$\vrp\rightarrow\psi,\chi\rto\psi\lor\chi$}\lorl
\BinaryInfC{$\vrp\rightarrow\psi,\vrp\lor\chi\rto\psi\lor\chi$}
\end{prooftree}
Furthermore, we have
\begin{prooftree}
\AxiomC{$\mc D$}
\UnaryInfC{$\vrp\rightarrow\psi,\vrp\lor\chi\rto\psi\lor\chi$}
\AxiomC{}\id
\UnaryInfC{$\vrp\rto\vrp$}\lorr
\UnaryInfC{$\vrp\rto\vrp\lor\chi$}\negl
\UnaryInfC{$\vrp,\neg(\vrp\lor\chi)\rto{}$}
\AxiomC{}\id
\UnaryInfC{$\psi\rto\psi$}\impllc
\BinaryInfC{$\vrp\rightarrow\psi,\psi,\neg(\vrp\lor\chi)\rto{}$}
\AxiomC{}\id
\UnaryInfC{$\chi\rto\chi$}\lorr
\UnaryInfC{$\chi\rto\vrp\lor\chi$}\negl
\UnaryInfC{$\chi,\neg(\vrp\lor\chi)\rto{}$}\wel
\UnaryInfC{$\vrp\rightarrow\psi,\chi,\neg(\vrp\lor\chi)\rto{}$}\lorl
\BinaryInfC{$\vrp\rightarrow\psi,\psi\lor\chi,\neg(\vrp\lor\chi)\rto{}$}\implr
\BinaryInfC{$\vrp\rightarrow\psi\rto(\vrp\lor\chi)\rightarrow(\psi\lor\chi)$}
\end{prooftree}
\end{proof}
\begin{lemma}\label{lem:c1astc2astgentzen} The following hold, for any $\vrp,\psi,\chi\in Fm_{\mc L_{CH}}$:
\begin{enumerate}
\item $\vchc\vrp\rightarrow\psi,\vrp\rightarrow\neg\psi\rto{}$
\item $\vchc{}\rto(\vrp\rightarrow\psi)\rightarrow\neg(\vrp\rightarrow\neg\psi)$;
\item $\vchc \vrp\rightarrow\psi,\psi\rightarrow\chi\rto\vrp\rightarrow\chi$
\item $\vchc \vrp\rightarrow\psi\rto(\psi\rightarrow\chi)\rightarrow(\vrp\rightarrow\chi)$.
\end{enumerate}
\end{lemma}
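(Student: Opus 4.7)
The four items split naturally into two pairs. Items (1) and (3) are ``flat'' sequent versions of Boethius-style incompatibility and transitivity; items (2) and (4) are their ``internalised'' counterparts, in which the principal connexive implication sits on the right of the conclusion. Accordingly, I would derive (1) first, then (3), and finally obtain (2) and (4) by applying $(\rightarrow$-r$)$ to their conclusions and reducing the two resulting premises to earlier items plus a single auxiliary sequent. Along the way I would also establish, for arbitrary formulas $\alpha,\beta$, the auxiliary sequent $\neg(\alpha\rightarrow\beta)\rto\alpha\rightarrow\neg\beta$, which is the proof-theoretic counterpart of Lemma~\ref{lem:arit}.(20).

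For (1), I would apply the derived rule $(\rightarrow$-l(d)$)$ of Lemma~\ref{gommapiuma} to $\varphi\rightarrow\neg\psi$, choosing $\Gamma=\Delta=\{\varphi\rightarrow\psi\}$. The first premise $\varphi,\varphi\rightarrow\psi\rto\psi$ is immediate by $(\rightarrow$-l(a)$)$ on two instances of (id). The second premise $\varphi\rightarrow\psi,\neg\varphi,\psi\rto{}$ I would derive by $(\rightarrow$-l(b)$)$ applied to $\varphi\rightarrow\psi$, whose two premises $\neg\varphi,\psi\rto\psi$ and $\varphi,\neg\varphi,\psi\rto{}$ follow by (id)$+$(w-l) and by $(\neg$-l$)$ on (id) followed by weakening, respectively.

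For (3), I would apply $(\rightarrow$-r$)$ to the right-hand side with $\Gamma=\Delta=\{\varphi\rightarrow\psi,\psi\rightarrow\chi\}$. The first premise $\varphi,\varphi\rightarrow\psi,\psi\rightarrow\chi\rto\chi$ follows by two successive $(\rightarrow$-l(a)$)$ applications. The delicate point is the second premise, $\varphi\rightarrow\psi,\psi\rightarrow\chi,\neg\varphi,\chi\rto{}$: here I would first establish the auxiliary sequents $\neg\varphi,\varphi\rightarrow\psi\rto\neg\psi$ and $\neg\psi,\psi\rightarrow\chi\rto\neg\chi$, each obtained by $(\neg$-r$)$ after a derivation exactly parallel to the second premise of (1); then chain them via (cut) and conclude by a final cut with $\neg\chi,\chi\rto{}$ (which is (id)$+(\neg$-l$)$).

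For (2) and (4), I would apply $(\rightarrow$-r$)$ to the outer arrow in the conclusion. The first premises reduce to earlier items: for (2), $\varphi\rightarrow\psi\rto\neg(\varphi\rightarrow\neg\psi)$ is (1) wrapped in $(\neg$-r$)$; for (4) the first premise is literally (3). The main obstacle lies in the second premises, $\neg(\varphi\rightarrow\psi),\neg(\varphi\rightarrow\neg\psi)\rto{}$ for (2) and $\varphi\rightarrow\psi,\neg(\psi\rightarrow\chi),\varphi\rightarrow\chi\rto{}$ for (4). Both require the auxiliary $\neg(\alpha\rightarrow\beta)\rto\alpha\rightarrow\neg\beta$, which I would prove by applying $(\rightarrow$-r$)$ with $\Gamma=\Delta=\{\neg(\alpha\rightarrow\beta)\}$, its two premises being closed by $(\neg$-l$)$ on $\neg(\alpha\rightarrow\beta)$ and reducing to the easily derivable sequents $\alpha,\beta\rto\alpha\rightarrow\beta$ and $\neg\alpha,\neg\beta\rto\alpha\rightarrow\beta$. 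Given this auxiliary, the second premise of (2) follows by a single $(\neg$-l$)$ on $\neg(\varphi\rightarrow\neg\psi)$, while the second premise of (4) follows by cutting the auxiliary first with an instance of (3) at $\chi:=\neg\chi$ and then with an instance of (1) at $\psi:=\chi$.
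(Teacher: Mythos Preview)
Your proposal is correct and follows essentially the same route as the paper: item (1) via $(\rightarrow$-l(d)$)$, item (3) via $(\rightarrow$-r$)$ with the two premises you describe, and items (2) and (4) via $(\rightarrow$-r$)$ reduced to the earlier items together with the auxiliary sequent $\neg(\alpha\rightarrow\beta)\rto\alpha\rightarrow\neg\beta$, which is exactly the paper's derivation $\mathcal{D}^{\ast}$. The only cosmetic differences are that the paper obtains the second premise in (1) and (3) by $(\rightarrow$-l(c)$)$ rather than $(\rightarrow$-l(b)$)$ or cuts, but these are interchangeable here.
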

\begin{proof}
Concerning (1), one has:
\begin{prooftree}
 \AxiomC{}\id
\UnaryInfC{$\vrp\rto\vrp$}
\AxiomC{}\id
\UnaryInfC{$\psi\rto\psi$}\implla
\BinaryInfC{$\vrp\rightarrow\psi,\vrp\rto\psi$}
\AxiomC{}\id
\UnaryInfC{$\vrp\rto\vrp$}\negl
\UnaryInfC{$\vrp,\neg\vrp\rto{}$}
\AxiomC{}\id
\UnaryInfC{$\psi\rto\psi$}\impllc
\BinaryInfC{$\vrp\rightarrow\psi,\neg\vrp,\psi\rto{}$}\implld
\BinaryInfC{$\vrp\rightarrow\psi,\vrp\rightarrow\neg\psi\rto{}$}
\end{prooftree}
Now, in order to prove (2), let us consider the following derivation $\mc D^\ast$
\begin{prooftree}
\AxiomC{}\id
\UnaryInfC{$\psi\rto\psi$}\wel
\UnaryInfC{$\vrp,\psi\rto\psi$}
\AxiomC{}\id
\UnaryInfC{$\vrp\rto\vrp$}\negl
\UnaryInfC{$\vrp,\neg\vrp\rto{}$}\wel
\UnaryInfC{$\vrp,\psi,\neg\vrp\rto{}$}\implr
\BinaryInfC{$\vrp,\psi\rto\vrp\rightarrow\psi$}\negl
\UnaryInfC{$\neg(\vrp\rightarrow\psi),\vrp,\psi\rto{}$}\negr
\UnaryInfC{$\neg(\vrp\rightarrow\psi),\vrp,\rto\neg\psi$}
\AxiomC{}\id
\UnaryInfC{$\vrp\rto\vrp$}\negl
\UnaryInfC{$\vrp,\neg\vrp\rto{}$}\wel
\UnaryInfC{$\neg\psi,\vrp,\neg\vrp\rto{}$}\wer
\UnaryInfC{$\neg\psi,\vrp,\neg\vrp\rto\psi$}
\AxiomC{}\id
\UnaryInfC{$\psi\rto\psi$}\negl
\UnaryInfC{$\psi,\neg\psi\rto{}$}\wel
\UnaryInfC{$\neg \vrp, \psi,\neg\psi\rto{}$}\implr
\BinaryInfC{$\neg\vrp,\neg\psi\rto\vrp\rightarrow\psi$}\negl
\UnaryInfC{$\neg\vrp,\neg\psi,\neg(\vrp\rightarrow\psi)\rto{}$}\implr
\BinaryInfC{$\neg(\vrp\rightarrow\psi)\rto\vrp\rightarrow\neg\psi$}
\end{prooftree}
Finally, one has
\begin{prooftree}
\AxiomC{Item (1)}
\UnaryInfC{$\vrp\rightarrow\psi,\vrp\rightarrow\neg\psi\rto{}$}\negr
\UnaryInfC{$\vrp\rightarrow\psi\rto\neg(\vrp\rightarrow\neg\psi)$}
\AxiomC{$\mc D^*$}
\UnaryInfC{$\neg(\vrp\rightarrow\psi)\rto\vrp\rightarrow\neg\psi$}\negl
\UnaryInfC{$\neg(\vrp\rightarrow\psi),\neg(\vrp\rightarrow\neg\psi)\rto{}$}\implr
\BinaryInfC{${}\rto(\vrp\rightarrow\psi)\rightarrow\neg(\vrp\rightarrow\neg\psi)$}
\end{prooftree}
Concerning (3), first let us consider the following proof $\mc D_2$
\begin{prooftree}
\AxiomC{}\id
\UnaryInfC{$\vrp\rto\vrp$}
\AxiomC{}\id
\UnaryInfC{$\psi\rto\psi$}\implla
\BinaryInfC{$\vrp\rightarrow\psi,\vrp\rto\psi$}
\AxiomC{}\id
\UnaryInfC{$\chi\rto\chi$}\implla
\BinaryInfC{$\vrp\rightarrow\psi,\psi\rightarrow\chi,\vrp\rto\chi$}
\end{prooftree}
Moreover, we have
\begin{prooftree}
\AxiomC{$\mc D_2$}
\UnaryInfC{$\vrp\rightarrow\psi,\psi\rightarrow\chi,\vrp\rto\chi$}
\AxiomC{}\id
\UnaryInfC{$\vrp\rto\vrp$}\negl
\UnaryInfC{$\vrp,\neg\vrp\rto{}$}
\AxiomC{}\id
\UnaryInfC{$\psi\rto\psi$}\impllc
\BinaryInfC{$\vrp\rightarrow\psi,\neg\vrp,\psi\rto{}$}
\AxiomC{}\id
\UnaryInfC{$\chi\rto\chi$}\impllc
\BinaryInfC{$\vrp\rightarrow\psi,\psi\rightarrow\chi,\chi,\neg\vrp\rto{}$}\implr
\BinaryInfC{$\vrp\rightarrow\psi,\psi\rightarrow\chi\rto\vrp\rightarrow\chi$}
\end{prooftree}
As regards (4), let us consider the following proof $\mc D_3$:
\begin{prooftree}
\AxiomC{$\mc D^{*}$}
\UnaryInfC{$\neg(\psi\rightarrow\chi)\rto\psi\rightarrow\neg\chi$}
\AxiomC{Item (3)}
\UnaryInfC{$\vrp\rightarrow\psi,\psi\rightarrow\lnot \chi\rto\vrp\rightarrow\lnot \chi$}
\AxiomC{Item (1)}
\UnaryInfC{$\vrp\rightarrow\chi,\vrp\rightarrow\neg\chi\rto{}$}\cut
\BinaryInfC{$\vrp\rightarrow\psi,\psi\rightarrow\neg\chi,\vrp\rightarrow\chi\rto{}$}\cut
\BinaryInfC{$\vrp\rightarrow\psi,\vrp\rightarrow\chi,\neg(\psi\rightarrow\chi)\rto{}$}
\end{prooftree}
Finally, one has:
\begin{prooftree}
\AxiomC{Item (3)}
\UnaryInfC{$\vrp\rightarrow\psi,\psi\rightarrow\chi\rto\vrp\rightarrow\chi$}
\AxiomC{$\mc D_3$}
\UnaryInfC{$\vrp\rightarrow\psi,\vrp\rightarrow\chi,\neg(\psi\rightarrow\chi)\rto{}$}\implr
\BinaryInfC{$\vrp\rightarrow\psi\rto(\psi\rightarrow\chi)\rightarrow(\vrp\rightarrow\chi)$}
\end{prooftree}
\end{proof}

\subsection{Gentzen algebraisability}

We are now ready to show that $\mathtt{CHC}$ is Gentzen algebraisable (according to Definition \ref{algebru}) with $\mc{CHA}$ as equivalent variety semantics. In other words, we show that there exists maps $\tau: Seq_{\mc{L}_{CH}}\rightarrow\mc{P}(Fm_{\mc{L}_{CH}}^{2})$ and $\rho:Fm_{\mc{L}_{CH}}^{2}\rightarrow\mc{P}(Seq_{\mc L_{CH}})$ such that, for any $S\cup\{s\}\subseteq Seq_{\mc L_{CH}}$, and $\vrp,\psi\in Fm_{\mc L_{CH}}$, one has:
\begin{enumerate}
\item $S\vchc s$ iff $\tau(S)\vdash_{\mc{CHA}}\tau(s)$;
\item $\vrp\approx\psi\dashv\vdash_{\mc{CHA}}\tau(\rho(\vrp\approx\psi))$;
\item $\tau(\sigma(s))= \sigma(\tau(s))$ and $\rho(\sigma(\varphi),\sigma(\psi))=\sigma(\rho(\varphi,\psi))$ for all substitutions $\sigma$ on $\mathbf{Fm}_{\mc L_{CH}}$, extended pointwise to $Fm_{\mc{L}_{CH}}^{2}$ and $Seq_{\mc L_{CH}}$.
\end{enumerate}
Given $\Gamma\rto\Pi\in Seq_{\mc L_{CH}}$ and $\vrp\approx\psi\in Fm_{\mc{L}_{CH}}^{2}$, we set
\[\tau(\Gamma\rto\Pi):=\Gamma^{\land}\leq\Pi^{\lor}\quad\text{ and }\quad\rho(\vrp\approx\psi):=\{\vrp\rto\psi,\psi\rto\vrp\}.\]
Given $S\subseteq Seq_{\mc L_{CH}}$, we set $\tau(S):=\{\Gamma^{\land}\leq\Pi^{\lor}:\Gamma\rto\Pi\in S\}$. Clearly, $\tau$ and $\rho$ commute with substitutions, whence (3) is satisfied.
A routine proof yields the following
\begin{lemma}\label{lem:normalsequents}
Let $S\cup\{\Delta\rto\Theta\}\subseteq Seq_{\mc L_{CH}}$. Then
\[S\vchc \Delta\rto\Theta\quad\text{ iff }\quad\{\Gamma^{\land}\rto\Pi^{\lor}:\Gamma\rto\Pi\in S\}\vchc\Delta^{\land}\rto\Theta^{\lor}.\]
\end{lemma}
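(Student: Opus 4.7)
The plan is to establish the lemma via a purely structural auxiliary fact: for every sequent $\Gamma\rto\Pi$, the sequents $\Gamma\rto\Pi$ and $\Gamma^{\land}\rto\Pi^{\lor}$ are interderivable in $\chc$ (from no additional hypotheses). Once this is in hand, both directions of the biconditional are immediate via (cut): a derivation of $\Delta\rto\Theta$ from $S$ can be composed with the interderivability of the conclusion and of each premise to yield a derivation of $\Delta^{\land}\rto\Theta^{\lor}$ from $\{\Gamma^{\land}\rto\Pi^{\lor} : \Gamma\rto\Pi \in S\}$, and symmetrically for the converse.

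For the forward direction $\Gamma\rto\Pi \vchc \Gamma^{\land}\rto\Pi^{\lor}$, I would first adjust the right-hand side: if $\Pi=\emptyset$, one application of (w-r) passes from $\Gamma\rto$ to $\Gamma\rto 0$, which by definition is $\Gamma\rto\Pi^{\lor}$; if $\Pi=\{\varphi\}$, then $\Pi^{\lor}=\varphi$ and no adjustment is needed. Next, I would iterate the rule ($\land$-l), in the combined form mentioned right after the proof of Lemma \ref{gommapiuma}, to collapse the antecedent multiset into the single formula $\Gamma^{\land}$ (the left-association convention fixing the shape of the resulting conjunction). In the degenerate case $\Gamma=\emptyset$, one instead appeals to (w-l) with $1$, turning $\rto\Pi^{\lor}$ into $1\rto\Pi^{\lor}$, that is, $\Gamma^{\land}\rto\Pi^{\lor}$.

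For the converse direction $\Gamma^{\land}\rto\Pi^{\lor} \vchc \Gamma\rto\Pi$, I would first verify in $\chc$ the auxiliary sequent $\Gamma\rto\Gamma^{\land}$: when $\Gamma=\emptyset$ this is exactly the axiom (1), while for nonempty $\Gamma$ it follows from (id) by iterated applications of ($\land$-r), padding with (w-l) to move each identity assumption into the common antecedent. A single (cut) of $\Gamma\rto\Gamma^{\land}$ against the given $\Gamma^{\land}\rto\Pi^{\lor}$ then produces $\Gamma\rto\Pi^{\lor}$. If $\Pi=\{\varphi\}$ this is already $\Gamma\rto\Pi$; if instead $\Pi=\emptyset$, we have $\Gamma\rto 0$, and a final (cut) against the axiom (0), namely $0\rto$, discharges the stoup and yields $\Gamma\rto$. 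No step in the argument presents a real obstacle; the only mild subtlety is the bookkeeping surrounding the roles of $0$ and $1$ at the boundary cases of the definitions of $\Pi^{\lor}$ and $\Gamma^{\land}$, which is handled uniformly by (w-r), axiom (0), (w-l), and axiom (1) respectively.
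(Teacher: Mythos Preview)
Your argument is correct and is precisely the routine verification the paper has in mind (the paper's own proof is simply ``Left to the reader''). One minor slip: the combined form of ($\land$-l) you invoke is stated immediately after the calculus box, not after the proof of Lemma~\ref{gommapiuma}.
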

\begin{proof}
Left to the reader.
\end{proof}
\begin{lemma} For all $\vrp,\psi\in Fm_{\mc L_{CH}}$, $\vrp\approx\psi\dashv\vdash_{\mc{CHA}}\tau(\rho(\vrp\approx\psi)).$
\end{lemma}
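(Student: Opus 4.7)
The plan is to unwind the definitions and reduce the statement to the antisymmetry of the lattice order on any $\mathbf A\in\mathcal{CHA}$. By the definitions of $\rho$ and $\tau$ given right before the lemma, we have
\[
\rho(\varphi\approx\psi)=\{\varphi\rto\psi,\;\psi\rto\varphi\},\qquad \tau(\rho(\varphi\approx\psi))=\{\varphi\leq\psi,\;\psi\leq\varphi\},
\]
where each inequality $\alpha\leq\beta$ is understood, as usual, as the equation $\alpha\approx\alpha\land\beta$ in the equational consequence $\vdash_{\mathcal{CHA}}$.

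For the left-to-right direction, I would argue that $\varphi\approx\psi\vdash_{\mathcal{CHA}}\varphi\approx\varphi\land\psi$ and $\varphi\approx\psi\vdash_{\mathcal{CHA}}\psi\approx\psi\land\varphi$: both are immediate from the equational axioms for meet (idempotence and substitution of equals). For the right-to-left direction, assume that in an arbitrary $\mathbf A\in\mathcal{CHA}$ we have $a\leq b$ and $b\leq a$ in the induced lattice order, i.e.\ $a=a\land b$ and $b=b\land a$. By the commutativity of meet, $a\land b=b\land a$, hence $a=b$; equivalently, $\{\varphi\leq\psi,\psi\leq\varphi\}\vdash_{\mathcal{CHA}}\varphi\approx\psi$.

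There is no real obstacle here: the content of the lemma is exactly the antisymmetry of the lattice order on any $\mathbf A\in\mathcal{CHA}$, which is guaranteed by the first bullet of Definition \ref{putacaso}, saying that $\langle A,\land,\lor,0,1\rangle$ is a (bounded distributive) lattice. No use of the implication-specific axioms C1--C5 or of any earlier arithmetical lemma is needed, and the proof is a two-line reduction to the definition of the induced partial order in a lattice.
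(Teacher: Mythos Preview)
Your proof is correct and follows essentially the same approach as the paper: both unwind the definitions to reduce the claim to $\varphi\approx\psi\dashv\vdash_{\mathcal{CHA}}\{\varphi\leq\psi,\psi\leq\varphi\}$, which the paper dismisses as trivially holding and you spell out via antisymmetry of the lattice order. Your version is simply more explicit where the paper's is more terse.
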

\begin{proof}
Just note that 
\begin{align*}
\vrp\approx\psi\dashv\vdash_{\mc{CHA}}\tau(\rho(\vrp\approx\psi))&\text{ iff }\vrp\approx\psi\dashv\vdash_{\mc{CHA}} \tau\{\vrp\rto\psi,\psi\rto\vrp\}\\
&\text{ iff }\vrp\approx\psi\dashv\vdash_{\mc{CHA}}\{\vrp\leq\psi,\psi\leq\vrp\}. 
\end{align*}
Since the last condition trivially holds, our result obtains.
\end{proof}
\begin{lemma}[Soundness]For all $S\cup\{s\}\subseteq Seq_{\mc L}$:
\[S\vchc s\text{ implies }\tau(S)\vdash_{\mc{CHA}}\tau(s).\]
\end{lemma}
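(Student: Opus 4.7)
The plan is to proceed by a standard induction on the height of the derivation of $s$ from $S$ in $\mathtt{CHC}$. At height $0$, either $s \in S$, in which case $\tau(s) \in \tau(S)$ is used directly, or $s$ is one of the three axioms. Under the translation $\tau$, these become the trivial inequalities $\alpha \leq \alpha$, $0 \leq 0$, and $1 \leq 1$, which hold in every $\alga \in \mathcal{CHA}$. Throughout the argument I will freely use Lemma \ref{lem:connexalgarithm1}, Lemma \ref{lem:arit} and, critically, the characterisation (P1) of the connexive arrow as a maximum, established in the proof of Theorem \ref{cor: termequiv}.

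For the inductive step, it suffices to verify that each inference rule of $\mathtt{CHC}$ is algebraically sound, in the sense that if the translations of its premises hold in every member of $\mathcal{CHA}$ (under any valuation), then so does the translation of its conclusion. The structural rules (w-l), (w-r), (cut) reduce to monotonicity of meet, the inequality $0 \leq \alpha$, and transitivity of $\leq$ combined with $a \land b \leq a$, respectively. The rules for $\land$ and $\lor$ are straightforward consequences of the bounded-distributive-lattice axioms. For $(\rightarrow$-l(a)$)$, given $\Gamma^{\land}\leq\alpha$ and $\beta\land\Delta^{\land}\leq\Pi^{\lor}$, an application of C3 yields $\Gamma^{\land}\land\Delta^{\land}\land(\alpha\to\beta) \leq \alpha\land(\alpha\to\beta)\land\Delta^{\land} = \alpha\land\beta\land\Delta^{\land} \leq \Pi^{\lor}$.

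For $(\rightarrow$-l(b)$)$, from $\alpha\land\beta\land\Delta^{\land}\leq 0$ and C3 we obtain $\alpha\land(\alpha\to\beta)\land\Delta^{\land}=0$, so $(\alpha\to\beta)\land\Delta^{\land}\leq\lnot\alpha$ by Lemma \ref{lem:connexalgarithm1}.(4); combining this with the hypothesis $\lnot\alpha\land\Gamma^{\land}\leq\beta$ and with the inequality $\lnot\alpha\land(\alpha\to\beta)\leq\lnot\beta$ (a consequence of Lemma \ref{lem:arit}.(2) and (13), as used in the proof of P1 in Theorem \ref{cor: termequiv}), we conclude $\Gamma^{\land}\land\Delta^{\land}\land(\alpha\to\beta)\leq\beta\land\lnot\beta=0$, as required.

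The main obstacle, and the only genuinely connexive step, is the rule $(\rightarrow$-r$)$, which introduces the connexive arrow on the right. Here the premises are $\alpha\land\Gamma^{\land}\leq\beta$ and $\lnot\alpha\land\beta\land\Delta^{\land}\leq 0$, and the desired conclusion is $\Gamma^{\land}\land\Delta^{\land}\leq\alpha\to\beta$. The plan is to invoke property (P1): it suffices to check that $c:=\Gamma^{\land}\land\Delta^{\land}$ satisfies both $\alpha\land c\leq\beta$ and $\lnot\alpha\land c\leq\lnot\beta$. The first is immediate from the first premise by monotonicity. For the second, the second premise gives $\lnot\alpha\land\beta\land\Delta^{\land}=0$, which by Lemma \ref{lem:connexalgarithm1}.(4) yields $\lnot\alpha\land\Delta^{\land}\leq\lnot\beta$, and hence $\lnot\alpha\land c\leq\lnot\beta$. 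An appeal to (P1) then gives $c\leq\alpha\to\beta$, finishing the induction.
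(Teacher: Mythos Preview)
Your proof is correct and follows essentially the same inductive structure as the paper's: induction on the derivation, with the lattice and structural rules handled by distributive-lattice reasoning and the arrow rules by the characterisation of $\to$ obtained in Theorem~\ref{cor: termequiv}. The only cosmetic difference is that for $(\rightarrow$-r$)$ and $(\rightarrow$-l(b)$)$ the paper invokes the Heyting expansion $a\to b=(a\Rightarrow b)\land(\lnot a\Rightarrow\lnot b)$ together with Lemma~\ref{lem:arit}.(20), whereas you appeal directly to the max-description (P1) and the inequality $\lnot a\land(a\to b)\leq\lnot b$; these are two formulations of the same fact, so the arguments are interchangeable.
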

\begin{proof}
Suppose that $s$ is $\Gamma\rto\Pi$. We prove the statement by induction on the length of the $\chc$-proof of $s$ from $S$. The base case is clear since, if $s$ is an axiom, then $\Gamma^\land \leq\Pi^{\lor}$ is $x\leq x$ which holds in $\mc{CHA}$, while if $s$ is an assumption the result is obvious. The induction step can be managed by distinguishing cases depending on the last rule applied in the derivation. We confine ourselves to the cases ($\rightarrow$-l(b)) and ($\rightarrow$-r), leaving the remaining cases to the reader. Suppose that $\Gamma\rto\Pi := \Gamma_1,\Delta \rto \varphi \to \psi$ has been obtained by $\Gamma_{1},\vrp \rto\psi$ and $\Delta,\psi,\neg\vrp\rto{}$ by means of an application of ($\rightarrow$-r). By induction hypothesis one has that $\tau(S)\vdash_{\mc{CHA}}\Gamma_{1}^{\land}\land\vrp\leq\psi$ and $\tau(S)\vdash_{\mc{CHA}}\Delta^{\land}\land\psi\land\neg\vrp\leq 0$. By Theorem \ref{cor: termequiv}, one has that $\tau(S)\vdash_{\mc{CHA}}\Gamma_{1}^{\land}\land\Delta^{\land}\leq (\vrp\Rightarrow\psi)\land(\neg\vrp\Rightarrow\neg\psi)=\vrp\rightarrow\psi$. Concerning the case ($\rightarrow$-l(b)), by induction hypothesis one has that $\tau(S)\vdash_{\mc{CHA}}\Gamma_{1}^{\land}\land\neg\vrp\leq\psi$ and $\tau(S)\vdash_{\mc{CHA}}\Delta^{\land}\land\vrp\land\psi\leq0$. By Lemma \ref{lem:connexalgarithm1}.(4), $\tau(S)\vdash_{\mc{CHA}}\Gamma_{1}^{\land}\land\neg\psi\land\neg\vrp\leq 0$ and $\tau(S)\vdash_{\mc{CHA}}\Delta^{\land}\land\vrp\leq\neg\psi$. Reasoning as above one has $\tau(S)\vdash_{\mc{CHA}}\Gamma_{1}^{\land}\land\Delta^{\land}\leq\vrp\rightarrow\neg\psi=\neg(\vrp\rightarrow\psi)$ by Lemma \ref{lem:arit}.(20). Therefore we conclude $\tau(S)\vdash_{\mc{CHA}}\Gamma_{1}^{\land}\land\Delta^{\land}\land(\vrp\rightarrow\psi)\leq\Gamma_{1}^{\land}\land\Delta^{\land}\land\neg\neg(\vrp\rightarrow\psi)\leq 0$.
\end{proof}
\begin{lemma}[Completeness] For all $S\cup\{s\}\subseteq Seq_{\mc L}$,
\[\tau(S)\vdash_{\mc{CHA}}\tau(s)\text{ implies }S\vchc s.\]
\end{lemma}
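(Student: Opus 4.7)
The plan is to transport the algebraic consequence across into a $\chc$-derivation by routing through the Hilbert-style calculus $\mathrm{CHL}$. By Lemma \ref{lem:normalsequents}, we may assume that every sequent in $S \cup \{s\}$ is of the form $\varphi \rto \psi$ with a single formula on each side. Writing $S = \{\varphi_i \rto \psi_i : i \in I\}$ and $s = \varphi \rto \psi$, the hypothesis becomes $\{\varphi_i \leq \psi_i\}_i \vdash_{\mc{CHA}} \varphi \leq \psi$, equivalently $\{\varphi_i \approx \varphi_i \land \psi_i\}_i \vdash_{\mc{CHA}} \varphi \approx \varphi \land \psi$. Since $\{x \to y, y \to x\}$ witnesses $1$-regularity for $\mc{CHA}$ (Lemma \ref{stamazza}) and $\mathrm{L}_{\mc{CHA}} = \mathrm{CHL}$ (Theorem \ref{manzanarre}), the algebraisability of $\mathrm{L}_{\mc{CHA}}$ translates this into the $\mathrm{CHL}$-derivability
\[
\{\varphi_i \to (\varphi_i \land \psi_i),\; (\varphi_i \land \psi_i) \to \varphi_i\}_i \;\vdash_{\mathrm{CHL}}\; \varphi \to (\varphi \land \psi),\; (\varphi \land \psi) \to \varphi.
\]

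The next step is a ``premise-lifting'' observation: from each $\chc$-hypothesis $\varphi_i \rto \psi_i$ we can derive both $\rto \varphi_i \to (\varphi_i \land \psi_i)$ and $\rto (\varphi_i \land \psi_i) \to \varphi_i$ in $\chc$. For the former, one feeds the premises $\varphi_i \rto \varphi_i \land \psi_i$ (by (id), the hypothesis, and ($\land$-r)) and $\lnot \varphi_i, \varphi_i \land \psi_i \rto{}$ (by ($\land$-l), ($\neg$-l), and (cut)) into the rule ($\to$-r); the latter is symmetric. With these auxiliary $\rto$-sequents in hand, the aim becomes to prove a general \emph{simulation principle}: whenever $\Gamma \vdash_{\mathrm{CHL}} \chi$, one has $\{\rto \gamma : \gamma \in \Gamma\} \vchc \rto \chi$.

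Assuming this simulation, we apply it to the $\mathrm{CHL}$-derivation above, obtaining $\{\rto \varphi_i \to (\varphi_i \land \psi_i),\; \rto (\varphi_i \land \psi_i) \to \varphi_i\}_i \vchc \rto \varphi \to (\varphi \land \psi)$. Finally we cut this against the derivable auxiliary sequent $\varphi, \varphi \to (\varphi \land \psi) \rto \psi$ (obtained by ($\land$-l) on (id) and ($\to$-l(a)) with $\varphi \rto \varphi$) to conclude $\varphi \rto \psi$, completing the argument modulo the simulation principle.

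The heart of the proof, and the only serious obstacle, is thus the simulation principle, which is established by induction on the length of the $\mathrm{CHL}$-derivation. The rule of modus ponens $\varphi, \varphi \to (\varphi \land \psi) \vdash \psi$ is discharged by cutting with the auxiliary sequent just displayed. For each axiom scheme $\eta$ of $\mathrm{CHL}$ one needs a $\chc$-derivation of $\rto \eta$: CHL5, CHL6, CHL9, CHL10 are precisely the content of Lemmas \ref{lem:c3c4gentzen} and \ref{lem:c1astc2astgentzen}; CHL7 follows by combining Lemma \ref{lem:c3c4gentzen} with the ``premise-lifting'' construction above; CHL2 and CHL3 reduce quickly to the ($0$)-axiom together with the derived rules of Lemma \ref{gommapiuma}; CHL4 is Lemma \ref{lem:c3c4gentzen}.(1) in disguise. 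The most tedious case is CHL1 together with CHL8, which are positive-logic theorems with $\Rightarrow$ (i.e.\ $\varphi \to (\varphi \land \psi)$) as the main implication: here one proceeds exactly as in the cut-free intuitionistic calculus $\mathtt{LJ}$, handling the extra $\lnot\alpha$ side-condition in ($\to$-r) by discharging $\lnot\alpha$ against $\alpha$ via ($\neg$-l) and (cut), a uniform manoeuvre that always succeeds because the principal formula $\alpha$ is invariably available on the left of the premise sequent.
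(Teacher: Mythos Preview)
Your route is genuinely different from the paper's. The paper argues contrapositively via a direct Lindenbaum--Tarski construction: from $S\not\vdash_{\chc}s$ it forms the $\chc$-theory $T$ generated by $S$, defines $\varphi\,\theta_T\,\psi$ iff $\varphi\rto\psi,\psi\rto\varphi\in T$, checks (using exactly the sequent lemmas already proved) that $\theta_T$ is a congruence and that $\mathbf{Fm}_{\mc L_{CH}}/\theta_T\in\mc{CHA}$, and then reads off a countermodel to $\tau(S)\vdash_{\mc{CHA}}\tau(s)$. No detour through $\mathrm{CHL}$ is needed.

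Your detour through $\mathrm{CHL}$ is in principle workable, but there is a real gap in the simulation step. You assert that ``CHL5 \ldots\ [is] precisely the content of Lemmas~\ref{lem:c3c4gentzen} and~\ref{lem:c1astc2astgentzen}'', and that for the remaining axioms the $\Rightarrow$-headed shape lets you discharge the extra $\lnot\alpha$-premise of ($\to$-r) by a ``uniform manoeuvre''. But CHL5 is \emph{not} $\Rightarrow$-headed: its main connective is the connexive arrow, so to obtain $\vchc{}\rto(\varphi\to\psi)\to((\psi\to\chi)\to(\varphi\to\chi))$ via ($\to$-r) you need, besides Lemma~\ref{lem:c1astc2astgentzen}.(4), the second premise
\[
\lnot(\varphi\to\psi),\,(\psi\to\chi)\to(\varphi\to\chi)\rto{}.
\]
None of the cited lemmas supplies this, and your uniform manoeuvre does not apply here because the consequent $(\psi\to\chi)\to(\varphi\to\chi)$ is not of the form $\alpha\land\beta$ with $\alpha$ the antecedent. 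The sequent \emph{is} valid (indeed $\lnot(a\to b)=\lnot((b\to c)\to(a\to c))$ in every connexive Heyting algebra, cf.\ the remarks following Theorem~\ref{thm:ndcd-rdr}), and it can be derived in $\chc$, but doing so amounts to replicating syntactically the chain of equalities in Lemma~\ref{lem:arit}.(19)--(20) and~(2) --- in particular proving $\lnot(\alpha\to\beta)\lto\rto\lnot(\beta\to\alpha)$ and $a\to b,\,c\to a\rto c\to b$ in $\chc$. That is substantial extra work you have not carried out or even indicated.

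By contrast, the paper's Lindenbaum--Tarski argument needs only that $\theta_T$ respects $\to$ and that the quotient satisfies C1--C5; these are exactly the sequent-level facts already established in Lemmas~\ref{lem:c3c4gentzen} and~\ref{lem:c1astc2astgentzen} (the ``one-sided'' versions $\alpha\rto\beta$ suffice, because C1--C5 only require inequalities, and $\theta_T$-compatibility of $\to$ follows from a single ($\to$-r)/($\to$-l) computation). So the paper's approach is both shorter and avoids precisely the lifting problem that your simulation principle runs into.
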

\begin{proof}
The proof follows a routine Lindenbaum-Tarski argument. Suppose contrapositively that $S\not\nvdash_{\chc} s$. We need a connexive Heyting algebra $\alga$ and a homomorphism $h:\mathbf{Fm}_{\mc{L}_{CH}}\rightarrow\alga$ such that $\tau(S)\subseteq \mathrm{ker}\, h$ while $\tau(s)\notin \mathrm{ker}\, h$. Let us denote by $T$ the smallest set of sequents containing $s$ and closed under $\vchc$. Moreover, for any $\vrp,\psi\in Fm_{\mc{L}_{CH}}$, we set $\vrp\lincong\psi$ if $\vrp\rto\psi,\psi\rto\vrp\in T$. We show that the desired algebra and homomorphism are $\mathbf{Fm}_{\mc{L}_{CH}}/\lincong$ and the natural homomorphism $x\mapsto x/\lincong$. To this aim we prove:
\begin{enumerate}
\item $\lincong$ is a congruence over $\mathbf{Fm}_{\mc{L}_{CH}}$, and
\item $\mathbf{Fm}_{\mc{L}_{CH}}/\lincong\in\mathcal{CHA}$.
\end{enumerate}
Concerning (1), note that $\lincong$ is obviously symmetric, reflexive and transitive by (id) and (cut). Now, in order to prove that $\lincong$ is compatible with operations, we show that for any $\vrp_{1},\vrp_{2},\psi_{1},\psi_{2}\in Fm_{\mc{L}_{CH}}$, $\vrp_{i}\lincong\psi_{i}$ ($i=1,2$) entails $(\vrp_{1}\star\vrp_{2})\lincong(\psi_{1}\star\psi_{2})$, for any $\star\in\{\land,\lor,\rightarrow\}$. Since the cases $\land,\lor$ are straightforward, we confine ourselves to $\rightarrow$. Let us consider the following derivation:
\begin{prooftree}
\AxiomC{$\psi_{1}\rto\vrp_{1}$}
\AxiomC{$\vrp_2\rto\psi_{2}$}\implla
\BinaryInfC{$\vrp_{1}\rightarrow\vrp_{2},\psi_{1}\rto\psi_{2}$}
\AxiomC{$\vrp_1\rto\psi_1$}\negl
\UnaryInfC{$\neg\psi_{1},\vrp_1\rto{}$}\wel
\UnaryInfC{$\neg\psi_{1},\vrp_1,\vrp_2\rto{}$}
\AxiomC{$\psi_2 \rto\vrp_2$}\wel
\UnaryInfC{$\psi_2 ,\lnot\vrp_{1}\rto\vrp_2$}\impllc
\BinaryInfC{$\vrp_{1}\rightarrow\vrp_{2},\psi_{2},\neg\psi_{1}\rto{}$}\implr
\BinaryInfC{$\vrp_{1}\rightarrow\vrp_{2}\rto\psi_{1}\rightarrow\psi_{2}$}
\end{prooftree}
Therefore $\vrp_{1}\rightarrow\vrp_{2}\rto\psi_{1}\rightarrow\psi_{2}\in T$. Similarly, one proves also that $\psi_{1}\rightarrow\psi_{2}\rto\vrp_{1}\rightarrow\vrp_{2}\in T$. We conclude that $\lincong$ is a congruence on $\mathbf{Fm}_{\mc{L}_{CH}}$.\\
As for (2), a routine proof show that the relation $\leq^{\mathbf{Fm}_{\mc{L}_{CH}}/\lincong}\subseteq (Fm_{\mc{L}_{CH}}/\lincong)^{2}$ such that $\vrp/\lincong\leq^{\mathbf{Fm}_{\mc{L}_{CH}}/\lincong}\psi/\lincong$ iff $\vrp\lto\rto\vrp\land\psi\subseteq T$ iff $\vrp\rto\psi\in T$ is indeed a lattice ordering. Furthermore, $\mathbf{Fm}_{\mc{L}_{CH}}/\lincong$ satisfies C1-C5 by Lemma \ref{lem:c3c4gentzen}, Lemma \ref{lem:c1astc2astgentzen} and the remarks following Theorem \ref{thm:ndcd-rdr}. Therefore we conclude $\mathbf{Fm}_{\mc L}/\lincong\in\mc{CHA}$. Now, it can be seen that $\tau(S)\subseteq\mathrm{ker}\, h$. Indeed,  making use of Lemma \ref{lem:normalsequents}, we have:
\begin{align*}
\tau(S)\subseteq\mathrm{ker}\, h &\text{ iff for any }\Gamma\rto\Pi\in S,\Gamma^{\land}\leq\Pi^{\lor}\in\mathrm{ker}\, h\\
 &\text{ iff for any }\Gamma\rto\Pi\in S,(\Gamma^{\land} \approx\Gamma^{\land}\land\Pi^{\lor})\in\mathrm{ker}\, h\\
&\text{ iff for any }\Gamma\rto\Pi\in S, \Gamma^{\land}\rto(\Gamma^{\land}\land\Pi^{\lor})\in T\\
&\text{ iff for any }\Gamma\rto\Pi\in S, \Gamma^{\land}\rto\Pi^{\lor}\in T\\
&\text{ iff for any }\Gamma\rto\Pi\in S, \Gamma\rto\Pi\in T.
\end{align*}
Given the way $T$ was defined, the last condition trivially holds. Similarly, one can show that $\tau(s)\notin \mathrm{ker}\, h$, since otherwise $S\vchc s$. Therefore our statement is proved.
\end{proof}
\subsection{Cut elimination}

Whether $\chc$ admits cut elimination is not an issue we can brush off by remarking that the intuitionistic calculus $\mathtt{LJ}$ is a cut-free calculus, and leaving it at that. Again, readers are warned that the existence of an algorithm for the elimination of cuts is not preserved by any of the relationships we have established in this paper. As a consequence, we must provide the required algorithm \textquotedblleft manually", as it were. This is the next item on our agenda.

\begin{theorem}
The calculus $\chc$ admits cut elimination.
\end{theorem}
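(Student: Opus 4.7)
My plan is to prove cut elimination for $\chc$ via the standard Gentzen-Ketonen nested induction: an outer induction on the complexity of the cut formula and an inner induction on the sum of heights of the two derivations above the topmost cut. Before launching the main argument, I would verify height-preserving admissibility of weakening and of the derived rules ($\lnot$-r), ($\lnot$-l), ($\rightarrow$-l(c)), and ($\rightarrow$-l(d)) of Lemma~\ref{gommapiuma}. These preliminary facts allow the derived rules to be invoked inside cut-reduction steps without inflating the height estimates that drive the inner induction.

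For the outer induction I would adopt a weighted complexity measure, setting $|p|=1$ for atoms and constants, $|\varphi\land\psi|=|\varphi\lor\psi|=|\varphi|+|\psi|+1$, and crucially $|\varphi\rightarrow\psi|=|\varphi|+2|\psi|+1$. This asymmetric weighting is tuned so that $|\lnot\alpha|=|\alpha|+3$ is strictly smaller than $|\alpha\rightarrow\beta|=|\alpha|+2|\beta|+1$ whenever $|\beta|\geq 2$, a strict inequality that is essential in the novel case below. The case analysis on the last rules applied at the two premises of the topmost cut is routine for axioms, non-principal cases (commute the cut upward, close by the inner IH), and principal-principal cases for $\land$ and $\lor$. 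The genuinely new work lies in the principal-principal cases for $\rightarrow$, where the left premise must end in ($\rightarrow$-r) with subsequents $\alpha,\Gamma_1\rto\beta$ and $\Delta_1,\lnot\alpha,\beta\rto{}$, while the right premise ends in either ($\rightarrow$-l(a)) or ($\rightarrow$-l(b)).

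The ($\rightarrow$-l(a)) subcase is handled in the customary way: two successive cuts, on $\alpha$ and on $\beta$, both of strictly smaller weighted complexity, followed by weakening, yield the desired end-sequent. The ($\rightarrow$-l(b)) subcase is the heart of the argument. With right-premise subsequents $\lnot\alpha,\Gamma_2\rto\beta$ and $\Delta_2,\alpha,\beta\rto{}$, I propose the reduction: (i) cut $\alpha,\Gamma_1\rto\beta$ against $\Delta_2,\alpha,\beta\rto{}$ on $\beta$, yielding $\Delta_2,\alpha,\Gamma_1\rto{}$; (ii) cut $\lnot\alpha,\Gamma_2\rto\beta$ against $\Delta_1,\lnot\alpha,\beta\rto{}$ on $\beta$, yielding $\Delta_1,\lnot\alpha,\Gamma_2\rto{}$; (iii) apply admissible ($\lnot$-r) to (i) to obtain $\Delta_2,\Gamma_1\rto\lnot\alpha$; (iv) cut (iii) against (ii) on $\lnot\alpha$, obtaining $\Gamma_1,\Delta_1,\Gamma_2,\Delta_2\rto{}$. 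The $\beta$-cuts in (i) and (ii) have strictly smaller weighted complexity and are dispatched by the outer IH; the $\lnot\alpha$-cut in (iv) is likewise dispatched by the outer IH whenever $|\beta|\geq 2$.

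The main obstacle I foresee is the borderline instance of (iv) in which $\beta$ is an atom or a constant, so that $|\lnot\alpha|=|\alpha\rightarrow\beta|$ and the new cut has the same weighted complexity as the one being eliminated. In this subcase the outer IH is unavailable and I must fall back on the inner IH, arguing that the height sum of the premises of the new $\lnot\alpha$-cut is strictly less than the height sum of the premises of the original $\alpha\rightarrow\beta$-cut. This arithmetic should close because the original ($\rightarrow$-r) and ($\rightarrow$-l(b)) inferences each contribute at least one step to the height of their conclusions, while the use of ($\lnot$-r) in step (iii) is height-preserving by the preliminary lemmas; keeping this bookkeeping uniformly correct across all non-principal commuting cases, especially those interacting with the auxiliary $\lnot\alpha$ occurrence in the second premise of ($\rightarrow$-r), is the subtlest part of the proof.
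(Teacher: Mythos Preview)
Your overall architecture and the three-cut reduction for the principal ($\rightarrow$-r)/($\rightarrow$-l(b)) case coincide with the paper's proof, but your handling of the borderline subcase has a genuine gap. When $|\beta|=1$ you want to apply the inner (height) IH to the $\lnot\alpha$-cut in step (iv), yet the premises of that cut are the derivations produced by steps (ii) and (iii), which themselves rest on the $\beta$-cuts of (i) and (ii). For the inner IH to be applicable, the premises of (iv) must be \emph{cut-free}; the only way to make them so is to invoke the outer IH on the $\beta$-cuts, and the outer IH gives no bound whatsoever on the heights of the resulting cut-free derivations. Hence the height sum entering (iv) is uncontrolled and the inner IH does not fire. (Leaving the $\beta$-cuts in place and counting naively does not help either: $\max(h_1,h_4)+\max(h_2,h_3)$ is not in general smaller than $\max(h_1,h_2)+\max(h_3,h_4)$.) Your remark that the original ($\rightarrow$-r) and ($\rightarrow$-l(b)) inferences each contribute one step cannot offset this blow-up.

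The paper sidesteps the difficulty by a different choice of weight: it sets $w(0)=0$ (strictly below atoms and $1$) and uses the symmetric $w(\varphi\rightarrow\psi)=w(\varphi)+w(\psi)+1$. Then $w(\lnot\alpha)=w(\alpha)+1<w(\alpha\rightarrow\beta)$ whenever $\beta\neq 0$, so the \emph{only} borderline instance is when the cut formula is literally a negation $\lnot\alpha$. That single case is dispatched by observing that the $\rightarrow$-rules with second argument $0$ collapse to the derived negation rules of Lemma~\ref{gommapiuma}, and the standard $\mathtt{LJ}$ reduction for a principal negation mix produces a single mix on $\alpha$, of strictly smaller weight---no appeal to the inner induction is needed at all. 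Replacing your asymmetric weighting by the paper's convention $w(0)=0$ is the simplest repair of your argument.
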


\begin{proof}
The proof of this theorem has (nearly) the same structure as Gentzen's original proof for the intuitionistic calculus $\mathtt{LJ}$. In particular, it can be shown that $\chc$ can be equivalently formulated with sequents $\Gamma\rto\Pi$ consisting in a \emph{multiset} $\Gamma$ of formulas and a stoup $\Pi$, with an explicit contraction rule, and that in such a calculus the cut rule is equivalent to the \emph{mix rule}:
\begin{center}
\AxiomC{$\Gamma\rto\alpha$}
\AxiomC{$\Delta\rto\Pi$}\RightLabel{($mix_\alpha)$}
\BinaryInfC{$\Gamma,\Delta^{\ast \alpha}\rto\Pi$}
\DisplayProof
\end{center}
where $\Delta^{\ast \alpha}$ is $\Delta$ minus any occurrence of the mixformula $\alpha$. We focus on proofs $\mathcal{D}$ with a single final application of $mix_\alpha$, and we proceed by induction on the lexicographically ordered pair $\langle w(\mathcal{D}), r(\mathcal{D}) \rangle$, where:
\begin{itemize}
    \item $w(\mathcal{D})$ (the \emph{weight} of $\mathcal{D}$)\footnote{This definition of weight is essentially the one given by Negri and von Plato \cite{NvP} in establishing cut elimination for a certain calculus for intuitionistic logic.} is defined by induction on the construction of the mixformula $\alpha$: $w(\alpha)=0$ if $\alpha$ is the constant $0$, $w(\alpha)=1$ if $\alpha$ is a variable or the constant $1$, $w(\alpha)=w(\beta)+w(\gamma)+1$ if $\alpha$ has the form $\beta \ast \gamma$, with $\ast \in \{ \land, \lor, \rightarrow \}$.
    \item $r(\mathcal{D})$ (the \emph{rank} of $\mathcal{D}$) is customarily defined.
\end{itemize}
There are only two cases where the proof differs from the analogous proof for $\mathtt{LJ}$. Let $S_1$ and $S_2$ be the premisses of the final application of $mix_\alpha$ in $\mathcal{D}$. We must only consider the following cases:
\begin{enumerate}
    \item The case where $r(\mathcal{D})=2$, and both $S_1$ and $S_2$ are conclusions of an application of a logical rule, in which case $\alpha$ is principal in both such applications.
    \item The case where $r(\mathcal{D})>2$, the antecedent of $S_1$ does not contain $\alpha$, and $S_2$ is the conclusion of an application of a logical rule whose principal formula is $\alpha$.
\end{enumerate}
Unsurprisingly, we only address the case $\alpha = \beta \rightarrow \gamma$. If $\gamma$ is $0$, in light of Lemma \ref{gommapiuma}, the case can be dealt with using the cut elimination strategies for $\mathtt{LJ}$ (formulated with primitive negation) and obtaining thereby a reduction in the weight of $\mathcal{D}$. Thus, we lose no generality in supposing that $\alpha$ is not $0$. In Case (1), we suppose first that $S_1$ was obtained by ($\rightarrow$-r) and $S_2$ was obtained by ($\rightarrow$-l(a)):
\begin{prooftree}
\AxiomC{$\mathcal{D}_1$}
\UnaryInfC{$\beta, \Gamma \rto \gamma$}
\AxiomC{$\mathcal{D}_2$}
\UnaryInfC{$\Delta, \lnot \beta , \gamma \rto $}\implr
\BinaryInfC{$\Gamma,\Delta \rto \beta \rightarrow \gamma$}
\AxiomC{$\mathcal{D}_3$}
\UnaryInfC{$\Sigma \rto \beta$}
\AxiomC{$\mathcal{D}_4$}
\UnaryInfC{$\gamma, \Lambda \rto \Pi$}\implla
\BinaryInfC{$\beta \rightarrow \gamma, \Sigma, \Lambda \rto \Pi$}\RightLabel{($mix_\alpha)$}
\BinaryInfC{$\Gamma, \Delta, \Sigma, \Lambda \rto \Pi$}
\end{prooftree}
Observe that our assumption to the effect that $r(\mathcal{D})=2$ implies that $\Lambda = \Lambda^{\ast \alpha}$ and that $\Sigma = \Sigma^{\ast \alpha}$. Consider the following proof $\mathcal{D}_5$:
\begin{prooftree}
\AxiomC{$\mathcal{D}_3$}
\UnaryInfC{$\Sigma \rto \beta$}
\AxiomC{$\mathcal{D}_1$}
\UnaryInfC{$\beta, \Gamma \rto \gamma$}\RightLabel{$(mix_\beta)$}
\BinaryInfC{$\Gamma^{\ast \beta},\Sigma \rto \gamma$}
\AxiomC{$\mathcal{D}_4$}
\UnaryInfC{$\gamma, \Lambda \rto \Pi$}\RightLabel{$(mix_\gamma)$}
\BinaryInfC{$\Gamma^{\ast \beta}, \Sigma, \Lambda^{\ast \gamma} \rto \Pi$}
\end{prooftree}
The subproof of $\mathcal{D}_5$ ending with $\Gamma^{\ast \beta},\Sigma \rto \gamma$ can be replaced by a proof $\mathcal{D}_5'$, containing no mixes and ending with the same sequent, by Induction Hypothesis. Again by Induction Hypothesis, the result of this replacement can be converted into a proof without mix of $\Gamma^{\ast \beta}, \Sigma, \Lambda^{\ast \gamma} \rto \Pi$, from which a proof without mix of $\Gamma, \Sigma, \Lambda \rto \Pi$ can be obtained by successive applications of weakening.

Suppose next that $S_1$ was obtained by ($\rightarrow$-r) and $S_2$ was obtained by ($\rightarrow$-lb):
\begin{prooftree}
\AxiomC{$\mathcal{D}_1$}
\UnaryInfC{$\beta, \Gamma \rto \gamma$}
\AxiomC{$\mathcal{D}_2$}
\UnaryInfC{$\Delta, \lnot \beta , \gamma \rto $}\implr
\BinaryInfC{$\Gamma,\Delta \rto \beta \rightarrow \gamma$}
\AxiomC{$\mathcal{D}_3$}
\UnaryInfC{$\lnot \beta, \Lambda \rto \gamma$}
\AxiomC{$\mathcal{D}_4$}
\UnaryInfC{$\beta, \gamma, \Sigma \rto $}\implla
\BinaryInfC{$\beta \rightarrow \gamma, \Sigma, \Lambda \rto $}\RightLabel{($mix_\alpha)$}
\BinaryInfC{$\Gamma, \Delta, \Sigma, \Lambda \rto $}
\end{prooftree}
Observe again that our assumption to the effect that $r(\mathcal{D})=2$ implies that $\Lambda = \Lambda^{\ast \alpha}$ and that $\Sigma = \Sigma^{\ast \alpha}$. Consider the following proof $\mathcal{D}_5$:
\begin{prooftree}
\AxiomC{$\mathcal{D}_2$}
\UnaryInfC{$\Delta, \lnot \beta, \gamma \rto$}
\AxiomC{$\mathcal{D}_3$}
\UnaryInfC{$\lnot \beta, \Lambda \rto \gamma $}\RightLabel{$(mix_\gamma)$}
\BinaryInfC{$\Lambda, \Delta^{\ast \gamma}, \lnot \beta \rto$}
\AxiomC{$\mathcal{D}_1$}
\UnaryInfC{$\beta, \Gamma \rto \gamma$}
\AxiomC{$\mathcal{D}_4$}
\UnaryInfC{$\beta, \gamma, \Sigma \rto $}\RightLabel{$(mix_\gamma)$}
\BinaryInfC{$\beta, \Gamma, \Sigma^{\ast \gamma} \rto $}\negr
\UnaryInfC{$\Gamma, \Sigma^{\ast \gamma} \rto \lnot \beta$}\RightLabel{$(mix_{\lnot \beta)}$}
\BinaryInfC{$\Gamma, \Sigma^{\ast \gamma}, \Lambda^{\ast \beta}, \Delta^{\ast \gamma \ast \lnot \beta} \rto $}
\end{prooftree}
Remark that the subproofs of $\mathcal{D}_5$ respectively ending with $\Lambda, \Delta^{\ast \gamma}, \lnot \beta \rto$, $\beta, \Gamma, \Sigma^{\ast \gamma} \rto $, and $\Gamma, \Sigma^{\ast \gamma}, \Lambda^{\ast \beta}, \Delta^{\ast \gamma \ast \lnot \beta} \rto $ have strictly smaller weights than $\mathcal{D}$, since we have assumed that $\gamma$ is not $0$. Hence, reasoning as above and using the Inductive Hypothesis several times, we conclude that there is a proof of $\Gamma, \Sigma^{\ast \gamma}, \Lambda^{\ast \beta}, \Delta^{\ast \gamma \ast \lnot \beta} \rto $ containing no mixes, and by successive applications of weakening we end up proving $\Gamma, \Sigma, \Lambda, \Delta \rto $.

As regards Case (2), the only interesting subcase is as follows:
\begin{prooftree}
\AxiomC{$\mathcal{D}_1$}
\UnaryInfC{$\Gamma \rto \beta \rightarrow \gamma$}
\AxiomC{$\mathcal{D}_2$}
\UnaryInfC{$\lnot \beta, \Delta \rto \gamma$}
\AxiomC{$\mathcal{D}_3$}
\UnaryInfC{$\Sigma, \beta, \gamma \rto $}\impllb
\BinaryInfC{$\beta \rightarrow \gamma, \Delta, \Sigma \rto $}\RightLabel{$(mix_\alpha)$}
\BinaryInfC{$\Gamma, \Delta^{\ast \alpha}, \Sigma^{\ast \alpha} \rto $}
\end{prooftree}
We first trade $\mathcal{D}$ for two proofs with a single final application of mix, call them $\mathcal{D}'$ and $\mathcal{D}''$ respectively, having the same weight as $\mathcal{D}$ and a strictly smaller rank:
\begin{prooftree}
\AxiomC{$\mathcal{D}_1$}
\UnaryInfC{$\Gamma \rto \beta \rightarrow \gamma$}
\AxiomC{$\mathcal{D}_2$}
\UnaryInfC{$\lnot \beta, \Delta \rto \gamma$}\RightLabel{$(mix_\alpha)$}
\BinaryInfC{$\lnot \beta, \Gamma, \Delta^{\ast \alpha} \rto \gamma $}
\end{prooftree}
\begin{prooftree}
\AxiomC{$\mathcal{D}_1$}
\UnaryInfC{$\Gamma \rto \beta \rightarrow \gamma$}
\AxiomC{$\mathcal{D}_3$}
\UnaryInfC{$\Sigma, \beta, \gamma \rto $}\RightLabel{$(mix_\alpha)$}
\BinaryInfC{$\Gamma, \Sigma^{\ast \alpha}, \beta, \gamma \rto $}
\end{prooftree}
By Inductive Hypothesis, there are $\mathcal{D}'''$ and $\mathcal{D}''''$ containing no mixes, respectively ending with $\lnot \beta, \Gamma, \Delta^{\ast \alpha} \rto \gamma $ and $\Gamma, \Sigma^{\ast \alpha}, \beta, \gamma \rto $. Hence the following proof $\mathcal{D}_4$:
\begin{prooftree}
\AxiomC{$\mathcal{D}_1$}
\UnaryInfC{$\Gamma \rto \beta \rightarrow \gamma$}
\AxiomC{$\mathcal{D}'''$}
\UnaryInfC{$\lnot \beta, \Gamma, \Delta^{\ast \alpha} \rto \gamma$}
\AxiomC{$\mathcal{D}''''$}
\UnaryInfC{$\Gamma, \Sigma^{\ast \alpha}, \beta, \gamma \rto $}\impllb
\BinaryInfC{$\beta \rightarrow \gamma, \Gamma, \Delta^{\ast \alpha}, \Sigma^{\ast \alpha} \rto $}\RightLabel{$(mix_\alpha)$}
\BinaryInfC{$\Gamma, \Delta^{\ast \alpha}, \Sigma^{\ast \alpha} \rto $}
\end{prooftree}
has a right rank equal to $1$ (for $\Gamma$ does not contain $\alpha$), and a rank strictly less than that of $\mathcal{D}$. Since the weights of $\mathcal{D}_4$ and $\mathcal{D}$ are the same, we have got every right to apply the Inductive Hypothesis and also this subcase is settled.
\end{proof}

\section{Philosophical upshots}\label{philup}

\subsection{BHK interpretation of connexive implication}

The connection we have discovered between intuitionistic logic and a certain connexive logic unearths a gravy train in terms of opportunities to shed new light on the very idea of connexivity. Together with classical logic, intuitionistic logic is perhaps the best understood logical system as regards its philosophical foundations. In particular, the celebrated \emph{BHK (Brouwer-Heyting-Kolmogorov) interpretation} (for which see e.g. \cite{TvD}) allows the intuitionistic logician to assign a constructive, computational meaning to the intuitionistic connectives and quantifiers. Via our deductive equivalence, we can parlay this semantics of proofs into a constructive interpretation of the connexive conditional. 

For a start, recall the BHK interpretation of conjunction, implication, negation and falsity:

\begin{itemize}
\item a proof of $\varphi \wedge \psi $ is a pair consisting in a proof of $%
\varphi $ and a proof of $\psi $;

\item a proof of $\varphi \Rightarrow \psi $ is a function that converts any
(hypothetical) proof of $\varphi $ into a proof of $\psi $;

\item there is no proof of $0$;

\item a proof of $\lnot \varphi :=\varphi \Rightarrow 0$ is a function that converts any
(hypothetical) proof of $\varphi $ into a proof of $0$; since, however, there is no proof of $0$, a proof of $\lnot \varphi$ amounts to a refutation of $\varphi $.

\end{itemize}

The given clause for negation has been criticised by Wansing \cite{Wan} because, in the BHK framework, an intuitionistically negated formula $\lnot \varphi$ is valid if and only if there exists a construction that outputs a nonexistent object, namely a proof of $0$, when applied to a proof of $\varphi$, a condition that can be satisfied only vacuously for unprovable formulas. Interestingly, this objection is echoed by Kapsner \cite{Kapsbook} in his defence of Aristotle's law from the alleged counterexamples arising in correspondence of unsatisfiable formulas. According to Kapsner, such putative counterexamples rest on \textquotedblleft empty promise conversions" very much like the intuitionistic falsifications deplored by Wansing (see also \cite{WOF}).

Here, on the other hand, we do not intend to take issue with the standard BHK interpretation of logical constants -- rather, we aim at reading off its clauses a possible computational meaning for the connexive implication of $\mathrm{CHL}$. A disclaimer is of course in order: We do not claim by any means that the suggestions that follow apply to \emph{any} connexive implication. It is unlikely, for example, that they can somehow relate to the implications studied within the different traditions stemming from Nelson \cite{Nelson}, Angell and McCall \cite{Angell}, De Finetti, Cooper and Cantwell \cite{ERS}, or the so-called \textquotedblleft Bochum plan" \cite{Wanbochum}, all of which are based on quite different intuitions. Other approaches, like the Boolean connexive logics of Jarmuzek and Malinowski \cite{JM}, and in particular the connexive logic of content equality by Estrada Gonzalez and Klonowski \cite{EGK}, may on the contrary stand better chances to ensconce themselves into the interpretation we suggest.

Thus, recall that $\varphi \rightarrow \psi $ can be defined in $\mathrm{IL}
$ as $\left( \varphi \Rightarrow \psi \right) \wedge \left( \lnot \varphi
\Rightarrow \lnot \psi \right) $, that $\lnot \varphi \Rightarrow \lnot \psi 
$ is intuitionistically equivalent to $\psi \Rightarrow \lnot \lnot \varphi $, and that $\varphi \Rightarrow 0$ is equivalent in both $\mathrm{CHL}$ and $\mathrm{IL}$ to $%
\varphi \rightarrow 0$. The standard BHK interpretation of the intuitionistic connectives appearing in the compound formula that interprets the connexive conditional translates into the following reading for $\varphi \to \psi$:

\begin{itemize}
\item a proof of $\varphi \rightarrow \psi $ is a pair consisting in a
function that converts any (hypothetical) proof of $\varphi $ into a proof
of $\psi $, and a function that converts any (hypothetical) proof of $\psi $
into a refutation of the refutation of $\varphi $.
\end{itemize}

A proof of a connexive implication $\varphi \rightarrow \psi $ can be seen as consisting of two different parts: A constructively acceptable proof of $\psi$ on the assumption that $\varphi$, and a weaker, \emph{classically} (but not perforce intuitionistically) valid proof of $\varphi$ on the assumption that $\psi$. It remains to be seen whether the weak asymmetry that distinguishes the different directions of such a \textquotedblleft quasi-equivalence" is sufficient to qualify our connective as a full-blooded conditional, as opposed to a biconditional in disguise. This misgiving certainly deserves a fuller discussion, which we defer to future research.

\subsection{On superconnexivity}

It is now time to take stock with respect to the idea of strong connexivity. At the outset, we sympathetically endorsed Kapsner's quest for logics that are not only legally connexive, in so far as they abide by the minimal requirements to be certified as such, but also have the concept that $\varphi \rightarrow \lnot \varphi $ is a sort of \textquotedblleft connexive contradiction" -- and that $\varphi \rightarrow \psi $ and $\varphi
\rightarrow \lnot \psi $ are a sort of \textquotedblleft connexive contraries" -- deeply ingrained in their semantics. $\mathrm{CHL}$ is strongly connexive in precisely this sense, since $\varphi \rightarrow \lnot \varphi $ is equivalent to $\lnot \varphi \land \lnot \lnot \varphi$, and hence unsatisfiable, while $\varphi \rightarrow \psi $ and $\varphi
\rightarrow \lnot \psi $ are respectively equivalent to $\varphi \Rightarrow \psi \land \lnot \varphi \Rightarrow \lnot \psi$ and to $\varphi \Rightarrow \lnot \psi \land \lnot \varphi \Rightarrow \lnot \lnot \psi$, and hence non simultaneously satisfiable.

Interestingly, Kapsner \cite{Kapstrong} also entertains, but ultimately rejects, a natural option for attaining strong connexivity by capturing \emph{in the object language} the unsatisfiability of $\varphi \rightarrow \lnot \varphi $, or the
non-simultaneous satisfiability of $\varphi \rightarrow \psi $ and $\varphi \rightarrow \lnot \psi $. He states some explosion-like \emph{superconnexive} principles, including:

\begin{itemize}
\item $\left( \varphi \rightarrow \lnot \varphi \right) \rightarrow \psi $
(Super-Aristotle 1)

\item $\left( \varphi \rightarrow \psi \right) \rightarrow \left( \left(
\varphi \rightarrow \lnot \psi \right) \rightarrow \chi \right) $
(Super-Boethius 1)
\end{itemize}

Yet, these principles are dumped because they lead to triviality given a modicum of assumptions. Very recently, however, Kapsner and Omori \cite{Kapsomo} have attempted to revisit the superconnexive insight. Their goal, in a nutshell, is to salvage the spirit of superconnexivity by slightly weakening the letter of it. The concept behind the standard principle of explosion can be pinned down in different ways -- by the demand that a contradiction entail \emph{any} sentence, or perhaps by the demand that a contradiction entail a designated absurdity, like the falsum constant. This may make no difference in most contexts, but sometimes it does (like in some relevant logics). Analogously, one might envisage the thought that a connexive contradiction need not entail any sentence whatsoever, but only the falsum. This naturally leads to the following \emph{super-Bot-connexive} principles:

\begin{itemize}
\item $\left( \varphi \rightarrow \lnot \varphi \right) \rightarrow 0$
(Super-Bot-Aristotle 1)

\item $\left( \varphi \rightarrow \psi \right) \rightarrow \left( \left(
\varphi \rightarrow \lnot \psi \right) \rightarrow 0\right) $
(Super-Bot-Boethius 1)
\end{itemize}

Unlike the original superconnexive principles, these weaker laws are not so easily trivialised: Indeed, Kapsner and Omori point out that they are consistent with a number of axiomatic frameworks.

Maybe, though, super-Bot-connexivity is an unnecessary retreat. Perhaps superconnexivity was abandoned too swiftly, while it was only in need of some rephrasing. Let us consider Super-Aristotle 1, by way of example. Connexive implication occurs twice therein -- once in the formulation of the connexive contradiction $\varphi \rightarrow \lnot \varphi$, and once to signal that such a contradiction explosively implies any old formula. In $\mathrm{CHL}$, we have an intuitionistic conditional that coexists with the connexive one. Can we avoid trivialisation by replacing one of the occurrences of implication in Super-Aristotle 1 by its intuitionistic counterpart? If we want to do so in a principled way, and not merely as a means to the end of consistency preservation, we ought to look at the BHK interpretation of these connectives, as spelt out in the previous subsection. The former occurrence of the arrow in Super-Aristotle 1 can't be anything but a connexive implication -- else, how could $\varphi \rightarrow \lnot \varphi$ express the idea of a \emph{connexive} contradiction? The latter occurrence, on the other hand, is much more plausibly construed as an intuitionistic conditional. If it can't be the case that $\varphi
\rightarrow \lnot \varphi $, any hypothetical proof of this fact should
(vacuously) yield a proof of an arbitrary $\psi $. But there's no reason to
expect that any hypothetical proof of some $\psi $ would yield a refutation
of a refutation of $\varphi \rightarrow \lnot \varphi $...

For these reasons, we argue that the ideal object-language analogues of the
strong connexive unsatisfiability principles are obtained by tweaking as follows the original superconnexive principles:

\begin{itemize}
\item $\left( \varphi \rightarrow \lnot \varphi \right) \Rightarrow \psi $
(mixed Super-Aristotle 1)

\item $\left( \varphi \rightarrow \psi \right) \Rightarrow \left( \left(
\varphi \rightarrow \lnot \psi \right) \Rightarrow \chi \right) $ (mixed
Super-Boethius 1)
\end{itemize}

Of course, this is only a preliminary suggestion in need of a deeper scrutiny, which we intend to bring forth in the future.

\section{Conclusions and open problems}\label{openprob}

The connection we found between a certain connexive logic and a time-honoured, well-understood logic like $\mathrm{IL}$ opens promising avenues of research. We list hereafter some problems one could naturally address. 

\begin{itemize}
\item \emph{Develop more proof systems for $\mathrm{CHL}$.} We used the term equivalence between $\mathrm{CHL}$ and $\mathrm{IL}$ to rejig the sequent calculus for the latter into a corresponding calculus for connexive implication. Something analogous can certainly be done for the other calculi (e.g. natural deduction systems) available for intuitionistic logic.

\item \emph{Study the extensions of $\mathrm{CHL}$.} The study of intermediate logics (logics that lie between $\mathrm{IL}$ and classical logic in terms of deductive strength) is a fruitful and amply trodden area of investigation. Via our translational equivalence, we get uncountably many logics between $\mathrm{CHL}$ and classical logic. It would be interesting to explore their properties and to assess their significance.

\item \emph{Clarify the relationships between $\mathrm{CHL}$ and other connexive
logics.} In particular, one should focus on other connexive logic based on positive logic, the prime example being Wansing's $\mathrm{C}$ \cite{Wanbochum}. It would also be desirable to shed some further light on the relationships between superconnexivity, super-Bot connexivity and mixed superconnexivity.

\item \emph{Make sense of other features inherited from $\mathrm{IL}$.} Intuitionistic logic is extremely pliant to different semantical analysises, in terms of Kripke models, topological semantics, etc. Perhaps a treatment of our connexive implication within these frameworks could better enlighten its meaning and conceptual significance.
\end{itemize}

\begin{acknowledgement}
A preliminary version of this material has been presented at Trends in Logic 21 -- Frontiers of Connexive Logic, Bochum, December 6-8, 2021. Thanks are due to the organisers of that conference and to all participants for their insightful comments. In particular, we are grateful to Luis Estrada Gonzalez, Andi Kapsner, Jacek Malinowski, Hitoshi Omori, and Heinrich Wansing, to whom we are indebted for several stimulating discussions. We gratefully acknowledge the support of Fondazione di Sardegna within the project \textquotedblleft Resource sensitive reasoning and logic\textquotedblright, Cagliari, CUP: F72F20000410007 and of MIUR within the projects PRIN 2017: \textquotedblleft Theory and applications of resource sensitive logics\textquotedblright, CUP: 20173WKCM5 and \textquotedblleft Logic and cognition. Theory, experiments, and applications\textquotedblright, CUP: 2013YP4N3. 
\end{acknowledgement}

\end{document}